\documentclass{article}
\usepackage[a4paper, total={6in, 10in}]{geometry}

\usepackage{graphicx} 
\usepackage[numbers]{natbib}

\usepackage{float}
\usepackage{placeins}
\usepackage{morefloats}
\usepackage{capt-of}

\usepackage{parskip}
\usepackage{amsmath}
\usepackage{amssymb}

\usepackage{amsthm}
\usepackage{amsfonts}
\usepackage{mathrsfs}
\usepackage{xcolor}
\usepackage{tikz}
\usepackage{enumerate}
\usepackage{float}
\definecolor{darkgreen}{rgb}{0.2,0.8,0}

\usepackage[font=scriptsize]{caption}

\usepackage{hyperref}
\hypersetup{%
  colorlinks = true,
  linkcolor  = orange
}
\usepackage[english]{babel}

\newtheoremstyle{assumptionStyle}
  {}{}{\itshape}{}{\bfseries}{.}{ }
  {\thmname{#1}\thmnumber{ (#2)}\thmnote{ #3}}
\theoremstyle{assumptionStyle}
\newtheorem{assumption}{Assumption}

\usepackage{mathtools}                                                      
\mathtoolsset{showonlyrefs=true}                                    
\allowdisplaybreaks

\usepackage[textsize=footnotesize]{todonotes}

\definecolor{darkgreen}{rgb}{0.4,0.7,0}

\definecolor{darkblue}{rgb}{0,0,.5}

\numberwithin{equation}{section}
\theoremstyle{plain}
\newtheorem{Theorem}{Theorem}[section]
\newtheorem{Definition}[Theorem]{Definition}
\newtheorem{Proposition}[Theorem]{Proposition}

\newtheorem{Lemma}[Theorem]{Lemma}
\newtheorem{Corollary}[Theorem]{Corollary}
\newtheorem{Remark}[Theorem]{Remark}

\usepackage[deletedmarkup=xout]{changes}

\newcommand \bE{\mathbb{E}}

\newcommand{\bI}{\mathbb{I}}
\newcommand{\bL}{\mathbb{L}}
\newcommand{\bN}{\mathbb{N}}
\newcommand \bP{\mathbb{P}}

\newcommand \bR{\mathbb{R}}

\newcommand{\bZ}{\mathbb{Z}}

\newcommand{\cE}{\mathcal{E}}
\newcommand \cF{\mathcal{F}}

\newcommand{\cL}{\mathcal{L}}
\newcommand{\cM}{\mathcal{M}}
\newcommand{\cN}{\mathcal{N}}

\newcommand{\cP}{\mathcal{P}}

\newcommand{\cU}{\mathcal{U}}

\newcommand{\sS}{\mathscr{S}}
\newcommand{\sM}{\mathscr{M}}

\newcommand \dd{\mathrm d}

\newcommand \ds{\dd s}
\newcommand \dt{\dd t}
\newcommand \du{\dd u}
\newcommand \dx{\dd x}
\newcommand \dy{\dd y}
\newcommand \dz{\dd z}

\newcommand \dW{\dd W}

\newcommand \Exp[1]{\mathbb E\left[#1\right]}
\newcommand \E[1]{\Exp{#1}}

\renewcommand \epsilon{\varepsilon}

\newcommand \Xx{X^x}
\newcommand \Ux{U^x}
\newcommand \Yx{Y^x}
\newcommand \Zx{Z^x}

\newcommand{\xxe}{\Xx_E}
\newcommand{\xxet}{\Xx_\tE}
\newcommand{\xxen}{X^{x^n}_E}
\newcommand{\xxetn}{X^{x^n}_\tE}

\newcommand{\ta}{\tilde{a} }
\newcommand{\ttheta}{\tilde{\theta}}
\newcommand{\tE}{{\tilde{E}}}

\newcommand{\meas}{{\cM}}

\def\1{\mathbf{1}}
\newcommand{\esssup}{\mathop{\mbox{\rm ess sup}}}

\newcommand \normr[1]{\left\|#1\right\|_{\rho}}

\newcommand \normp[1]{\left|#1\right|_p}
\newcommand \norm[1]{\left|#1\right|}
\newcommand \Tr{{\rm Tr}}

\makeatletter           
\newenvironment{breakablealgorithm}
  {
   \begin{center}
     \refstepcounter{algorithm}
     \hrule height.8pt depth0pt \kern2pt
     \renewcommand{\caption}[2][\relax]{
       {\raggedright\textbf{\ALG@name~\thealgorithm} ##2\par}%
       \ifx\relax##1\relax 
         \addcontentsline{loa}{algorithm}{\protect\numberline{\thealgorithm}##2}%
       \else 
         \addcontentsline{loa}{algorithm}{\protect\numberline{\thealgorithm}##1}%
       \fi
       \kern2pt\hrule\kern2pt
     }
  }{
     \kern2pt\hrule\relax
   \end{center}
  }
\makeatother
\usepackage{algorithm}
\usepackage{algpseudocode}

\title{Numerical approximation of Markovian BSDEs in infinite horizon and elliptic PDEs}

\usepackage{authblk}
\author[1]{Emmanuel Gobet\thanks{The first author research has benefited from the support of the Chaire "Risques Financiers" of Fondation du Risque, and of the Chaire "Stress
Test, RISK Management and Financial Steering" of the Ecole Polytechnique Foundation.}}
\author[2]{Adrien Richou\thanks{The second author research has benefited from the support of the ANR Project ReLISCoP (ANR-21-CE40-0001).}}
\author[3]{Charu Shardul\thanks{The third author has benefited from the support of Chaire ``Stress Test, RISK Management and Financial Steering" of the Ecole Polytechnique Foundation and of the ANR Project ReLISCoP (ANR-21-CE40-0001).}}

\affil[1]{LPSM, Sorbonne Université, 4 Place Jussieu, 75252, Paris Cedex 05, France}
\affil[2]{Université de Bordeaux, Institut de Mathématiques de Bordeaux, UMR CNRS 5251, 351 Cours de la Libération, 33405, Talence cedex, France}
\affil[3]{CMAP, Ecole Polytechnique, Route de Saclay, 91128, Palaiseau cedex, France}
{
    \makeatletter
    \renewcommand\AB@affilsepx{: \protect\Affilfont}
    \makeatother

    \affil[ ]{Email ids}

    \makeatletter
    \renewcommand\AB@affilsepx{, \protect\Affilfont}
    \makeatother

    \affil[1]{emmanuel.gobet@sorbonne-universite.fr}
    \affil[2]{adrien.richou@math.u-bordeaux.fr}
    \affil[3]{charu.shardul@polytechnique.edu}
}

\date{\today}

\begin{document}

\maketitle

\begin{abstract}
    {We study backward stochastic differential equations (BSDEs) in infinite horizon and {design} efficient numerical schemes for solving them. We establish a probabilistic representation of the solution of the BSDE using Malliavin derivative and prove results for contraction of a Picard scheme. We develop three numerical schemes, of which the first two are based on a fixed point argument using contraction, imposing additional assumptions compared to what is needed for existence and uniqueness of the solution. The first scheme is a space grid based approximation where we establish tight numerical error bounds using a growth truncation argument; it performs well in low dimensions but computational times increase exponentially with dimension. The second scheme uses neural network approximations for which we have proved a convergence result. Using neural networks alleviates the curse of dimensionality, giving good accuracy in very high dimensions. The third scheme also uses neural networks but does not rely on contraction arguments, showcasing good performance even for larger $z$-Lipschitz dependence outside the domain of contraction.}

    {{\sc Keywords:}  BSDEs in infinite horizon; Probabilistic numerical scheme; Elliptic PDEs; Feynman-Kac representation

    {\sc MSC2020:} 65C30; 65C20; 65M12; 60H35}
\end{abstract}

\section{Introduction}
{\paragraph{Statement of the problem.}
In this work, we study the numerical approximation of the {solution to the} Markovian Backward Stochastic Differential Equation (BSDE) in infinite horizon
\begin{align}
  \label{eq:BSDEinfinitehorizon:intro}
  Y_t&=Y_T+\int_t^T f(X_s,Y_s,Z_s) \ds -\int_t^T Z_s \dW_s, \qquad \forall 0\leqslant t\leqslant T<+\infty
  \end{align}
  where the solution process $(Y,Z)$ takes values in some appropriate space and $X$ is the solution of the following SDE:
  \begin{align}
    \label{eq:SDE:intro}
    X_t = x + \int_0^t b(X_s) \ds +\int_0^t \sigma(X_s) \dW_s, \quad 0 \leqslant t.
   \end{align}
   Notations and assumptions are stated below. {Solving \eqref{eq:BSDEinfinitehorizon:intro}  is equivalent to solving a semi-linear elliptic partial differential equation (PDE), see \eqref{eq:PDEellipticRd} later.}}

\paragraph{State of the art.}
While the literature concerning the simulation of (Markovian) BSDEs in the finite horizon setting {(i.e. $T$ is fixed and $Y_T=g(X_T)$ for some known function $g$)} is huge {(see \cite{10.1214/23-PS18} for a recent broad overview, and references therein)}, there are few results concerning the infinite horizon setting. In the recent paper \cite{Beck-Gonon-Jentzen-20}, authors provide a full-history recursive multilevel Picard approximation scheme for solving semi-linear elliptic 
PDE. Nevertheless, their framework does not contain possible non linearity with respect to the gradient of the solution: the generator $f$ does not depend on $z$.
A neural network {(NN for short)} approximation scheme for elliptic PDEs is also provided in \cite{Kremsner-Steinicke-Szolgyenyi-20}, or in \cite{Nusken-Richter-21} for elliptic PDEs with a boundary, but without a study of the numerical errors. 
{A natural first attempt to tackle infinite horizon BSDEs would be to approximate the problem by a finite horizon BSDE with a large horizon $T$, and to apply an existing finite horizon numerical scheme. However, error bounds for finite horizon BSDEs typically depend exponentially on $T$, which would result in at best a logarithmic convergence rate.}

\paragraph{Our contributions.}
{The approach of this work is different and rather follows the strategy developed in \cite{Gobet-Richou-Szpruch-26}. 
First, we obtain a fixed point representation for the solution for the BSDE \eqref{eq:BSDEinfinitehorizon:intro} in infinite time horizon, which we {leverage} to design numerical schemes for solving such BSDEs.  Namely, in Proposition \ref{prop:representationBSDEinfinitehorizon}, we establish a representation result for the unique solution of the BSDE using the general assumptions for existence and uniqueness (see Proposition \ref{prop:existence:uniqueness:BSDE}) where we write $(Y, Z)$ as expectations by introducing a {family of} Malliavin weight $(\Ux_t)_{t\geqslant 0}$ (defined in \eqref{eq:malliavin:weight}). To be precise, due to the Markovian setting, we establish that the solution is given by  $(Y_t, Z_t) = (u(X_t), \bar{u}(X_t))$ for all $t\geqslant 0$, where $u$ and $\bar u$ solve the following fixed point representation 
\begin{align}
\begin{split}
 u(x):= \Yx_0 &= \E{\int_0^\infty  e^{-as}(f(\Xx_s,u(\Xx_s),\bar{u}(\Xx_s))+a u(\Xx_s)) \ds}\\
&=\frac{1}{\theta }\E{  (f(\Xx_E,u(\Xx_E),\bar{u}(\Xx_E))+a u(\Xx_E))e^{-(a-\theta)E} },\\
 \bar{u}(x) := \Zx_0  &= \E{\int_0^\infty  e^{-\ta s}(f(\Xx_s,u(\Xx_s),\bar{u}(\Xx_s))+ \ta u(\Xx_s))\Ux_s \ds}\\
&=\frac{\sqrt{\pi}}{\sqrt{\ttheta} }\E{  (f(\Xx_\tE,u(\Xx_\tE),\bar{u}(\Xx_\tE))+ \ta u(\Xx_\tE))\sqrt{\tE}e^{-(\ta-\ttheta)\tE}\Ux_\tE }
\end{split}
\label{eq:contraction:intro}
\end{align}
where $E\sim \cE(\theta)$ is an exponential random variable  and $\tE\sim \Gamma(1/2, \ttheta)$ has a gamma distribution. 
{Here $a>0$ and $\tilde a>0$ are free parameters that satisfy some technical conditions (see Proposition \ref{prop:representationBSDEinfinitehorizon} for precise statements). The choice of a gamma distribution in $\bar{u}$ instead of a simple exponential one is aimed at controlling the explosion of the Malliavin weight $\Ux_s$ as the time $s$ is small.}\\
In Section \ref{subsec:contraction}, we study the contraction properties of this fixed point representation \eqref{eq:contraction:intro}, using various norms. Results for verifying the contraction are given in Section \ref{section:checking:BSDE} and Section \ref{section:checking:BSDE:bis}.\\
{In Section \ref{section:scheme}, leveraging}  the representation result \eqref{eq:contraction:intro}, we propose three numerical schemes.
The first  scheme 
is based on a space grid-based approximation which relies on a Picard scheme. {In Theorem \ref{thm:errornum:infinitehorizon}, we state tight bounds on the numerical error.} The scheme is highly efficient in low dimensions but suffers from the curse of dimensionality, which has motivated us to design the second scheme with NNs which are known to alleviate such problems. The second scheme, in Subsection \ref{section:second_scheme}, is also based on a Picard scheme (see Algorithm \ref{ch4:alg:contractionNN}.
In Proposition \ref{ch4:prop:contractionNN} we prove a convergence result for this   scheme. However, for parameters close to and beyond the range for contraction, these two schemes show their shortcomings. The third scheme, in Subsection \ref{section:third_scheme}, is based on a direct approach which solves this problem as it does not rely on a Picard iteration approach (see Algorithm \ref{ch4:alg:directNN}.
Some {comparative} numerical experiments are provided in Section \ref{ch4:numexperiments}. {The proofs of some technical results are postponed to Section \ref{section:proofs}.}}

\paragraph{Notations.} {The following notations will be used throughout this work.}
\begin{description}
\item \emph{Vector, matrix.}
We use the same notation $\norm{\cdot}$ for the Euclidean norm of a vector $x\in \bR^p$ and for the matrix $2$-norm (i.e. subordinated to the Euclidean norm) of a matrix $A\in \bR^{p \times q}$; $\langle \cdot,\cdot \rangle$ denote the Euclidean scalar product; 
$x^\top$ denotes the transpose of the vector $x$ and $\Tr(A)$ denotes the trace of $A$. For a vector $x \in \bR^p$ (resp. a matrix $A \in \bR^{p \times q}$) and $R \in \bR^+$, we denote $\lfloor x \rfloor_R$ the projection of $x$ (resp. $A$) on the Euclidean ball $\bar B(0,R)$ of $\bR^p$ (resp. $\bR^{p \times q}$). For $A = (c_1,...,c_q) \in \bR^{p \times q}$ and $x \in \bR^p$, we denote $\langle A,x \rangle$ the row-valued vector $(c_1^\top x,...,c_q^\top x)$. We denote $\bI_{d}$ the identity matrix or the identity function on $\mathbb{R}^d$.

\item \emph{Function.}
{$C^0(\bR^p,\bR^q)$ denotes the set of continuous functions from $\bR^p$ to $\bR^q$.
$C^1(\bR^p,\bR^q)$ (resp. $C^1_b(\bR^p,\bR^q)$) denotes the set of functions $f\in C^0(\bR^p,\bR^q)$ that are differentiable with a continuous (resp. continuous bounded) derivative. For a bounded function $f \in C^0(\bR^p,\bR^q)$ {or $f\in C^0(\bR^p,\bR^{q\times r})$, we denote $|f|_{\infty}:=\sup_{x \in \bR^p}|f(x)|$.}

For a function $f:=(f_i)_{1 \leqslant i \leqslant p} \in C^1_b(\bR^p,\bR^q)$, we denote $\nabla_x f$ the function $\bR^p \owns x \mapsto (\partial_{x_j} f_i)_{i,j} \in \bR^{p\times q}$. In particular, when $p=1$, $\nabla_x f$ is a row-vector valued function.}

For all $r \geqslant 0$, we define $\rho_r(x):=1+|x|^r$ for all $x \in \mathbb{R}^p$.

\item \emph{Random variables and stochastic processes.} {We consider a filtered probability space $(\Omega, \cF, (\cF_t)_{t\geqslant 0},\bP)$ which supports a $d$-dimensional Brownian motion $W=(W^1,\dots,W^d)^\top$. The filtration $(\cF_t)_{t\geqslant 0}$ is the one generated by $W$ augmented by the $\bP$-null sets, so that the filtration satisfies the "usual conditions".\\
For $p\geqslant 1$, $\bL_p$ denotes the set of (scalar or vector-valued) random variables $X$ with finite norm $\normp{X}:=(\E{\norm{X}^p})^{1/p}<+\infty$. $\bL_\infty$ stands for the set of essentially bounded random variables.\\
$\sS^2_T$ is the set of vector valued adapted continuous processes $Y$ on $[0,T]$ such that 
$$\E{ \sup_{s \in [0,T]} |Y_s|^2}<+\infty.$$
$\sS^2_{loc}$ denotes the set of continuous processes $Y$ on $\bR^+$ such that  $(Y_t)_{t \leqslant T} \in \sS^2_T$, for all $T>0$.
$\sM^2_T$ is the set of real matrix-valued predictable processes $Z$ on $[0,T]$ such that
$$\E{ \int_0^T \norm{Z_s}^2 \ds }<+\infty.$$
$\sM^2_{loc}$ denotes the set of continuous processes $Z$ on $\bR^+$ such that $(Z_t)_{t \leqslant T} \in \sM^2_T$, for all $T>0$.}

\item \emph{Specific distributions.} 
{For $\ell>0$ and $a>0$, we denote $\Gamma(a,\ell)$ the gamma distribution with density (with respect to the Lebesgue measure)
$$x \mapsto \frac{\ell^a}{\Gamma(a)}x^{a-1}e^{-\ell x}\1_{(0,+\infty)}(x).$$
We recall the scaling property between distributions $\Gamma(a,\ell)\stackrel{d}=\ell^{-1}\Gamma(a,1)$.}
\end{description}

\section{Analytical results}
\subsection{{Stochastic} model and value function}
{We consider the following Markovian BSDE in infinite horizon
\begin{align}
  \label{eq:BSDEinfinitehorizon}
  Y_t&=Y_T+\int_t^T f(X_s,Y_s,Z_s) \ds -\int_t^T Z_s \dW_s, \qquad \forall 0\leqslant t\leqslant T<+\infty
\end{align}
where the solution process $(Y,Z)$ takes values in the space $\sS^2_{loc} \times \sM^2_{loc}$ and $X$ is the solution of the following $d$-dimensional SDE:
\begin{align}
  \label{eq:SDE}
  X_t = x + \int_0^t b(X_s) \ds +\int_0^t \sigma(X_s) \dW_s, \quad 0 \leqslant t.
\end{align}
{Note that the processes $X$ and $W$ have the same dimension $d$.}
An existence and uniqueness result for the solution of \eqref{eq:BSDEinfinitehorizon} is recalled in Proposition \eqref{prop:existence:uniqueness:BSDE}. Throughout the paper, we take the following assumptions on $f$, $b$ and $\sigma$.

\begin{assumption}
  \label{as:generatorf}
  $f(\cdot, y, z)$ is continuous $\forall y \in \bR^{d'}, z\in \bR^{d'\times d}$ and there exist constants $r \geqslant 0$, $M_f\geqslant 0$, $\mu\geqslant 0$, $K_{f,y}\geqslant 0$, $K_{f,z}\geqslant 0$, $K_b \geqslant 0$ and $K_{\sigma} \geqslant 0$ such that, $\forall x,x' \in \bR^d, y,y' \in \bR^{d'}, z,z' \in \bR^{d' \times d}$,
  \begin{enumerate}
  \item  $|f(x,y,z)-f(x,y',z')| \leqslant K_{f,y} |y-y'| + K_{f,z}\norm{z-z'},$
  \item $|f(x,y,z)| \leqslant M_f(1 + |x|^r+|y|+\norm{z}),$
  \item $\langle f(x,y,z)-f(x,y',z),y-y'\rangle \leqslant -\mu|y-y'|^2$ {(known as $(-\mu)$-monotonicity)},%
  \item $|b(x)-b(x')| \leqslant K_b |x-x'|$,
  \item $\norm{\sigma(x)-\sigma(x')} \leqslant K_\sigma |x-x'|$.
  \end{enumerate}
\end{assumption}
Let us remark that we can assume without restriction that {$0 
\leqslant\mu \leqslant K_{f,y}$.}

Since $b$ and $\sigma$ are Lipschitz, the SDE \eqref{eq:SDE} has a unique solution for any starting point $x \in \bR^d$: whenever necessary to emphasize on the $x$-dependence of the solution, we shall denote it by $X^x$ and we denote $(\Yx,\Zx)$ the solution of the associated BSDE.}

{Let us recall some known existence and uniqueness results for this BSDE.
\begin{Proposition}
  \label{prop:existence:uniqueness:BSDE}
 Assume \eqref{as:generatorf}.
 \begin{enumerate}
  \item If there exists {a parameter $\nu \in (0,2\mu-K_{f,z}^2)$}
  such that, for all $x\in \bR^d$,
  \begin{align*}
  \label{prop:cond:existence:uniqueness:BSDE:infinite:horizon:hyp:rho}
\E{\int_0^{+\infty} e^{-\nu s}|\Xx_s|^{2r}\ds} <+\infty,
  \end{align*}
then, for all $x \in \bR^{d}$, BSDE \eqref{eq:BSDEinfinitehorizon} has a unique solution $(\Yx,\Zx)\in \mathscr{S}^2_{loc} \times \mathscr{M}^2_{loc}$ such that
\begin{align*}
\lim_{T \rightarrow + \infty} \E{e^{-\nu T} |\Yx_T|^2} = 0.
\end{align*}
Moreover this solution satisfies
\begin{align*}
\E{\int_0^{+\infty} e^{-\nu s}\norm{\Zx_s}^2 \ds}+\E{\sup_{0 \leqslant s < +\infty} e^{-\nu s} |\Yx_s|^2}  \leqslant C\left(1+\E{\int_0^{+\infty} e^{-\nu s}|\Xx_s|^{2r}\ds}\right),
\end{align*}
with a constant $C$ that does not depend on $x$.
 \item If $d'=1$ (i.e. $\Yx$ is scalar valued) and $r=0$ then, for all $x \in \bR^d$, the BSDE \eqref{eq:BSDEinfinitehorizon} has a solution such that $\Yx$ is a   bounded continuous process and $(\Yx,\Zx)$ satisfies
 \begin{align}
 \esssup_{\Omega, t \in \bR^+} |\Yx_t| +  \E{\int_0^{\infty} e^{-2\mu s}\norm{\Zx_s}^2 \ds}\leqslant C,
 \end{align}
 with a constant $C$ that does not depend on $x$.
 Moreover this solution is unique in the class of processes $(\Yx,\Zx)$ such that $\Yx$ is bounded continuous   and $\Zx \in \mathscr{M}^2_{loc}$.
 \end{enumerate}
\end{Proposition}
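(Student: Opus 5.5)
The plan for part 1 is the classical Darling--Pardoux/Briand--Hu construction. We approximate by finite-horizon BSDEs and pass to the limit in weighted spaces. For each $n\in\bN$, let $(Y^n,Z^n)$ be the unique solution on $[0,n]$ of the BSDE with generator $f(X^x_s,\cdot,\cdot)$ and terminal condition $Y^n_n=0$. We extend it by $Y^n_t=0$, $Z^n_t=0$ for $t>n$. Existence on $[0,n]$ follows from the standard Lipschitz theory of Pardoux--Peng: $f(X_s,0,0)$ is square integrable on $[0,n]$ because $|f(x,0,0)|\leqslant M_f(1+|x|^r)$ and $X$ has moments of all orders.

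The key step is an a priori estimate, obtained by applying Itô's formula to $e^{-\nu t}|Y_t|^2$ (or to $e^{-\nu t}|\delta Y_t|^2$ for differences). By $(-\mu)$-monotonicity and the $z$-Lipschitz bound,
\[
2\langle y,f(x,y,z)\rangle\leqslant -2\mu|y|^2+2K_{f,z}|y||z|+2|y||f(x,0,0)|
\]
\[
\leqslant (-2\mu+K_{f,z}^2+\epsilon)|y|^2+\norm{z}^2+\epsilon^{-1}|f(x,0,0)|^2 .
\]
We choose $\epsilon$ small so that $-2\mu+K_{f,z}^2+\epsilon<-\nu$, which is possible because $\nu<2\mu-K_{f,z}^2$. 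With a slight refinement (absorbing $(1-\eta)\norm{z}^2$ with $K_{f,z}^2/(1-\eta)$), this yields
\[
\E{\int_0^\infty e^{-\nu s}\norm{Z^n_s}^2\ds}+\E{\sup_s e^{-\nu s}|Y^n_s|^2}\leqslant C\Big(1+\E{\int_0^\infty e^{-\nu s}|X_s|^{2r}\ds}\Big),
\]
with the sup handled by Burkholder--Davis--Gundy. The constant depends only on $\mu,K_{f,z},\nu,M_f$, hence not on $x$ or $n$.

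Applying the same computation to $\delta Y=Y^m-Y^n$ with $m>n$ on $[n,m]$ gives a bound by $\E{\int_n^m e^{-\nu s}|f(X_s,0,0)|^2\ds}$, the tail of a convergent integral. Backward propagation on $[0,n]$, where the source term vanishes, then shows that $(Y^n,Z^n)$ is Cauchy in the weighted norm. The limit solves the BSDE on every $[t,T]$ by passing to the limit in the equation, inherits the estimate, and satisfies $\E{e^{-\nu T}|Y_T|^2}\to0$ by the tail bound. This last step is the delicate point: I would obtain it from $\E{e^{-\nu T}|Y_T|^2}\leqslant C\E{\int_T^\infty e^{-\nu s}|f(X_s,0,0)|^2\ds}$, using a slightly smaller exponent $\nu'<\nu$ in the Itô computation if needed.

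Uniqueness in part 1 again uses Itô's formula on $e^{-\nu t}|\delta Y_t|^2$ between $t$ and $T$. Monotonicity makes the drift term nonpositive, which gives $|\delta Y_t|^2\leqslant e^{\nu(t-T)}\E{|\delta Y_T|^2\mid\cF_t}\cdot$(const). Letting $T\to\infty$ and using the limit condition kills the right-hand side.

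For part 2 ($d'=1$, $r=0$), $f(\cdot,0,0)$ is bounded by $M_f$. When $\mu>0$, one gets the bound $|Y|\leqslant M_f/\mu$ by linearizing: write $f(x,y,z)=f(x,0,0)+\alpha_s y+\beta_s z$ with $\alpha_s\leqslant-\mu$ and $|\beta_s|\leqslant K_{f,z}$. Girsanov under $\beta$ then represents $Y^n_t$ as a discounted conditional expectation, bounded uniformly in $n$. Convergence of $Y^n$ follows by the same linearization applied to differences: the error is at most $M_f e^{-\mu(n-t)}/\mu$, and the argument must be adapted if $\mu=0$. The estimate on $Z$ comes from Itô on $e^{-2\mu t}Y_t^2$, which is bounded since $Y$ is bounded. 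Uniqueness among bounded solutions uses the same linearized Girsanov representation for $\delta Y$ on $[t,T]$ with $T\to\infty$: since $\delta Y_T$ is bounded and discounted by $e^{-\mu(T-t)}$, we get $\delta Y=0$. I expect the main obstacle to be the case $\mu=0$ in part 2, where the discounting disappears. I suspect the paper in fact assumes $\mu>0$ there, or relies on cited results such as Briand--Hu or Royer.
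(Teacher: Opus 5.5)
Your proposal is correct and is essentially the argument behind the results the paper cites in place of a proof (\cite[Theorem 5.57]{pard:rasc:14} and \cite[Lemma 3.1]{briand1998stability}). Those results use finite-horizon approximations with zero terminal value at $N$ (the same $Y^{N,x}$ the paper reuses in the proof of Proposition~\ref{prop:representationBSDEinfinitehorizon}), weighted It\^o/BDG estimates under $\nu<2\mu-K_{f,z}^2$ in the multidimensional case, and linearization plus Girsanov in the scalar bounded case. Two small corrections: the case $\mu=0$ in part 2 cannot be ``adapted'' because the claim is false there (for $f\equiv 0$ every constant is a bounded solution, and for $f\equiv 1$ no solution is bounded since $\E{Y_t}=Y_0-t$), so your suspicion is right and $\mu>0$ is an implicit hypothesis, as the scalar case of \eqref{as:int:X:nablaX} confirms. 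Also, in part 1 you should not pass to a smaller exponent $\nu'<\nu$ (integrability is only assumed at $\nu$) and you need not, since your estimate on $[T,n]$ already gives $\E{e^{-\nu T}|Y^n_T|^2}\leqslant C\,\E{\int_T^{\infty}e^{-\nu s}|f(X_s,0,0)|^2\ds}$ uniformly in $n$, which passes to the limit and tends to $0$.
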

For proofs, we refer to  \cite[Theorem 5.57]{pard:rasc:14} for the multidimensional framework and   \cite[Lemma 3.1]{briand1998stability} for the scalar setting.}

{Under {the same} assumptions, the BSDE solution  has a Markovian representation that gives a probabilistic representation of the following system of elliptic PDEs 
\begin{equation}
\label{eq:PDEellipticRd}
 \cL u(x) + f(x,u(x),\nabla_x u(x)\sigma(x))=0, \quad \forall x \in \bR^d,
\end{equation}
where $\cL$ is the generator of the semi-group associated to the SDE \eqref{eq:SDE}.

\begin{Proposition}
\label{prop:PDE}
  Assume  \eqref{as:generatorf} and the continuity of function $f$.
Then the Markovian representation $Y_t= u(X_t)$ {holds,} where $u$ is a continuous function. Moreover, if the $i$-th coordinate of $f$ depends only on the $i$-th row of the matrix $z$ then $u$ is a viscosity solution of \eqref{eq:PDEellipticRd}.
\end{Proposition}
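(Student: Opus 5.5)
The plan is to define $u(x):=\Yx_0$ for every $x\in\bR^d$, where $(\Yx,\Zx)$ is the unique solution given by Proposition \ref{prop:existence:uniqueness:BSDE}. Then $u$ is deterministic, since $\cF_0$ is trivial. The proof has three steps, taken in order: (i) the Markov property $Y_t=u(X_t)$, (ii) continuity of $u$, and (iii) the viscosity property.

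\textbf{Step (i): Markov property.} For fixed $t$, the process $(X^x_{t+s})_{s\geqslant 0}$ solves the SDE driven by the shifted Brownian motion $W^{(t)}_s:=W_{t+s}-W_t$, started at $X^x_t$.
\begin{itemize}
\item For a deterministic starting point $y$, the pair $(Y^y,Z^y)$ is a measurable functional $\Phi(y,W)$ of the Brownian path, by pathwise uniqueness and the Yamada--Watanabe-type construction.
\item The shifted pair $(Y^x_{t+\cdot},Z^x_{t+\cdot})$ solves the same BSDE driven by $W^{(t)}$, with the same integrability or boundedness condition.
\item By uniqueness, $Y^x_{t+s}=\Phi(X^x_t,W^{(t)})_s$.
\item Since $X^x_t$ is $\cF_t$-measurable and $W^{(t)}$ is independent of $\cF_t$, this gives $Y^x_t=u(X^x_t)$ a.s.
\end{itemize}
Formally, I would prove this first for simple (step-function) starting points $\xi=\sum_i x_i\1_{A_i}$ with $A_i\in\cF_t$, using uniqueness on each $A_i$, and then pass to the limit using the stability estimate proved in Step (ii).

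\textbf{Step (ii): continuity of $u$.} I would compare $(Y^x,Z^x)$ and $(Y^{x'},Z^{x'})$ with a weighted Itô formula applied to $e^{-\nu s}|Y^x_s-Y^{x'}_s|^2$, using the monotonicity constant $\mu$ and $\nu<2\mu-K_{f,z}^2$ to absorb the $z$-Lipschitz term. This should give a stability estimate of the form
\[
|u(x)-u(x')|^2\leqslant C\,\E{\int_0^\infty e^{-\nu s}\bigl|f(X^x_s,Y^x_s,Z^x_s)-f(X^{x'}_s,Y^x_s,Z^x_s)\bigr|^2\ds}.
\]
To show the right-hand side tends to $0$ as $x'\to x$:
\begin{itemize}
\item Truncate in time at a large $T$. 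The tail beyond $T$ is small uniformly for $x'$ in a neighbourhood, by the growth bound on $f$ and the moment condition; in the bounded case $r=0$ one uses $e^{-2\mu s}$ and boundedness instead.
\item On $[0,T]$, use $X^{x'}\to X^x$ in $\sS^2_T$ together with the continuity of $f$ in $x$. Dominated or Vitali convergence then applies, with uniform integrability coming from the polynomial growth.
\end{itemize}
I expect this to be the main obstacle. The function $f$ is only continuous in $x$, not Lipschitz, so the estimate cannot be closed with a modulus; it requires a careful uniform-integrability argument on an infinite horizon, with the exponential weight controlling the tails. In the scalar bounded case I would instead use the stability result of \cite{briand1998stability}.

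\textbf{Step (iii): viscosity property.} When $f_i$ depends only on the $i$-th row of $z$, the system decouples componentwise in its $z$-dependence, so one can work coordinate by coordinate. Take a test function $\varphi\in C^2$ touching $u_i$ from above at $x_0$. Following the classical argument of Pardoux--Peng for finite-horizon BSDEs, apply the time-shift identity $u(x_0)=Y^{x_0}_0$ with $Y^{x_0}_h=u(X^{x_0}_h)$ on a short interval $[0,h]$ stopped at the exit time of a small ball. Then compare $Y^{x_0}_i$ with $\varphi(X^{x_0})$ via a one-dimensional comparison theorem on $[0,h]$. This comparison is valid precisely because $f_i$ involves only $z_i$.

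A contradiction argument then gives $\cL\varphi(x_0)+f_i(x_0,u(x_0),\nabla\varphi(x_0)\sigma(x_0))\geqslant 0$. The supersolution inequality is obtained symmetrically. The continuity from Step (ii) is what makes the test-function argument legitimate.
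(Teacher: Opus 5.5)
Your proposal is correct and follows essentially the same route as the paper. The paper gives no argument of its own: it cites \cite[Theorem 5.74]{pard:rasc:14}, and for continuity in the scalar bounded case the stability result \cite[Theorem 3.4]{briand1998stability}. Your three steps are the classical proof behind those citations: the Markov property from the flow property and uniqueness, continuity from a weighted It\^o stability estimate (or Briand--Hu in the scalar bounded case), and the viscosity property from the Pardoux--Peng comparison argument on the $i$-th coordinate with $y$ frozen at $u(X_s)$.

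One detail in Step (ii) needs tightening when $r>0$. The uniform smallness in $x'$ of the tail $\E{\int_T^{\infty} e^{-\nu s}|X^{x'}_s|^{2r}\ds}$ does not follow from the merely pointwise moment condition of Proposition \ref{prop:existence:uniqueness:BSDE}. Assume instead a locally uniform bound such as \eqref{prop:representationBSDEinfinitehorizon:hyp:rho}, and run the It\^o estimate with a weight $\nu_1\in(\nu,2\mu-K_{f,z}^2)$. The $\nu_1$-weighted tail is then at most $e^{-(\nu_1-\nu)T}C(1+|x'|^{2r})$. The same care is needed in Step (i) when you pass from simple to general random initial points.
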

For the proof in the multidimensional setting we refer to   \cite[Theorem 5.74]{pard:rasc:14}. For the case $d'=1$, the proof can be easily adapted: in particular, the continuity follows directly from the stability result given by \cite[Theorem 3.4]{briand1998stability}.}

\subsection{Time randomized Feynman-Kac representation}
{Now, we want to obtain a probabilistic representation for $\nabla u$ in terms of $u$ and $\nabla {u}$. In order to get it, we use the following extra assumptions on $b$ and $\sigma$.
\begin{assumption}
 \label{as:b:sigma}
 {The function $\sigma$ is bounded with $\sup_{x \in \bR^d} \norm{\sigma(x)} = M_\sigma<+\infty$, moreover $\sigma \in C^1_b(\bR^d, \bR^{d \times d})$ and $b \in C^1_b(\bR^d, \bR^{d})$. 
 }
Furthermore, for all $x$, $\sigma(x)$ is invertible and $\sigma^{-1}$ is bounded: $\sup_{x \in \bR^d} \norm{\sigma^{-1}(x)} = M_{\sigma^{-1}}<+\infty$. 
\end{assumption}
Under  \eqref{as:b:sigma}, $(\Xx_t)_{t \geqslant 0}$ is differentiable with respect to $x$, recalling e.g. \cite{kuni:97}. We denote $(\nabla_x \Xx_t)_{t \geqslant 0}$ its {tangent} process and we define
$(\Ux_t)_{t\geqslant 0}$, a Malliavin weight (row vector valued) given by
\begin{align}
\Ux_t = \frac{1}{t}\int_0^t \langle \sigma^{-1}(\Xx_s)\nabla_x \Xx_s , \dW_s\rangle\ \sigma(x) = (U^{x,j}_t)_{1 \leqslant j \leqslant d}.
\label{eq:malliavin:weight}
 \end{align}

We also introduce some integrability assumptions.
\begin{assumption}
 \label{as:int:X:nablaX}
~
\begin{enumerate}
    \item There exists {$\nu\in (0,2\mu-{K_{f,z}^2}(\1_{d'>1}\vee\1_{r>0}))$} 
and $C>0$ such that, for all $x \in \bR^d$
\begin{equation}
\label{prop:representationBSDEinfinitehorizon:hyp:rho}
 \E{\int_0^{+\infty} e^{-\nu s}|\Xx_s|^{2r}\ds} \leqslant C(1+|x|^{2r}).
\end{equation}
\item 
We set a parameter $a> \mu-\frac{K_{f,z}^2}{2}(\1_{d'>1}\vee\1_{r>0})>0$.
\item There exists a parameter $\ta \geqslant  2\mu-K_{f,z}^2(\1_{d'>1}\vee\1_{r>0})>0$ such that for any {bounded}  set $K \subset \bR^d$ we have
\begin{align}
 \label{prop:representationBSDEinfinitehorizon:hyp:b}
 &\int_0^{+\infty} e^{-\ta s} \sup_{x \in K} \Exp{|\Xx_s|^{2r}} \ds +
 \sup_{x \in K} \Exp{\int_0^{+\infty} e^{-\ta s} \norm{\nabla_x \Xx_s}^2 \ds} <+\infty.
\end{align}
\end{enumerate} 
\end{assumption}
 
\begin{Proposition}
\label{prop:representationBSDEinfinitehorizon}
{Assume  \eqref{as:generatorf}, \eqref{as:b:sigma} and \eqref{as:int:X:nablaX}.}
Then there exists a function $u \in {C}^1(\bR^d,\bR^{d'})$ such that, for all $x\in \bR^d$, $\Yx=u(\Xx)$, $\nabla_x u({\Xx})\sigma(\Xx):=\bar{u}(\Xx)$ is a continuous version of $\Zx$ and we have the following probabilistic representation for $(u,\bar{u})$: for all $x\in \bR^d$,
\begin{align}
 u(x):= \Yx_0 &= \E{\int_0^\infty  e^{-as}(f(\Xx_s,u(\Xx_s),\bar{u}(\Xx_s))+a u(\Xx_s)) \ds}\\
&=\frac{1}{\theta }\E{  (f(\Xx_E,u(\Xx_E),\bar{u}(\Xx_E))+a u(\Xx_E))e^{-(a-\theta)E} },
\label{prop:eq:u}\\
 \bar{u}(x) := \Zx_0  &= \E{\int_0^\infty  e^{-\ta s}(f(\Xx_s,u(\Xx_s),\bar{u}(\Xx_s))+ \ta u(\Xx_s))\Ux_s \ds}\\
&=\frac{\sqrt{\pi}}{\sqrt{\ttheta} }\E{  (f(\Xx_\tE,u(\Xx_\tE),\bar{u}(\Xx_\tE))+ \ta u(\Xx_\tE))\sqrt{\tE}e^{-(\ta-\ttheta)\tE}\Ux_\tE }
\label{prop:eq:baru}
\end{align}
where $\theta >0$, $\ttheta >0$, $E\sim\cE(\theta)$ is independent of $W$ and $\tE \sim \Gamma(1/2,\ttheta)$ is independent of $W$. 
Lastly, we have the following growth:
\begin{align}
\label{prop:representationBSDEinfinitehorizon:growth}
 |u(x)| +  \norm{\bar{u}(x)} \leqslant C(1+|x|^{r}),\quad \forall x \in \bR^d.
\end{align}
\end{Proposition}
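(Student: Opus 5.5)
We plan to reduce everything to the finite-horizon Markovian theory plus a limiting argument. Proposition \ref{prop:existence:uniqueness:BSDE} gives existence and uniqueness of $(\Yx,\Zx)$, and Proposition \ref{prop:PDE} gives $\Yx_t=u(\Xx_t)$ with $u$ continuous. From the a priori estimate and the growth condition \eqref{prop:representationBSDEinfinitehorizon:hyp:rho} we get $|u(x)|\leqslant C(1+|x|^r)$.

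\emph{Step 1: representation of $u$.} We apply Itô's formula to $e^{-as}\Yx_s$ on $[0,T]$ and take expectations, which kills the stochastic integral because $Z\in\sM^2_{loc}$. This gives
$$u(x)=\E{e^{-aT}u(\Xx_T)}+\E{\int_0^T e^{-as}\big(f(\Xx_s,\Yx_s,\Zx_s)+a\Yx_s\big)\ds}.$$
We then let $T\to\infty$. Since $a>\nu/2$ by the choice of $a$ and $\nu$, the limit $\E{e^{-\nu T}|\Yx_T|^2}\to0$ kills the boundary term. The integral converges by Cauchy--Schwarz, the linear growth of $f$, and the weighted bounds on $Y$ and $Z$. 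The second expression in \eqref{prop:eq:u} is just Fubini with the density $\theta e^{-\theta s}$ of $E$.

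\emph{Step 2: differentiability and representation of $\bar u$.} This is the main obstacle, and we handle it by truncation in time. Set $g(x):=u(x)$ and consider the finite-horizon BSDE on $[0,T]$ with terminal condition $u(\Xx_T)$. By the Markov property its solution coincides with $(\Yx,\Zx)$ on $[0,T]$.

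For the modified generator $F(s,x,y,z)=e^{-\ta s}(f+\ta y)$, the process $\hat Y_s=e^{-\ta s}Y_s$ satisfies a finite-horizon BSDE. We use a Malliavin integration-by-parts (Bismut--Elworthy--Li type) representation, valid when $f$ is only Lipschitz, in the spirit of the finite-horizon results in \cite{Gobet-Richou-Szpruch-26}. It reads
$$\Zx_0=\E{e^{-\ta T}u(\Xx_T)\,\Ux_T}+\E{\int_0^T e^{-\ta s}\big(f(\Xx_s,\Yx_s,\Zx_s)+\ta\Yx_s\big)\Ux_s\ds},$$
and it also shows that $u$ is $C^1$ with $\nabla_x u(x)\sigma(x)=\Zx_0$.

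If this finite-horizon result is not available under our assumptions, we proceed in two stages. First we mollify $f$ and $u$ so that the classical differentiability of BSDEs in $x$ holds. We then use $Z_t=\nabla_x Y_t(\nabla_x X_t)^{-1}\sigma(X_t)$ together with the duality $\E{\Phi\int_0^s\langle\cdot,\dW\rangle}$ to produce the weight $\Ux_s$, and finally pass to the limit using stability of BSDEs.

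\emph{Step 3: letting $T\to\infty$.} We need $\E{|\Ux_s|^2}\leqslant C\,s^{-1}\E{\int_0^s|\nabla_x\Xx_r|^2\dd r}$, which follows from Assumption \eqref{as:b:sigma} and the Itô isometry. Combining this with \eqref{prop:representationBSDEinfinitehorizon:hyp:b}, the condition $\ta\geqslant 2\mu-K_{f,z}^2\geqslant\nu$, and Cauchy--Schwarz shows two things. The boundary term $e^{-\ta T}u(\Xx_T)\Ux_T$ vanishes along a sequence $T_n\to\infty$. The integral converges absolutely, and the $s^{-1/2}$ singularity near $0$ is integrable.

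These estimates are uniform for $x$ in bounded sets $K$. This yields continuity of $x\mapsto\Zx_0$ and hence $u\in C^1$, so $\bar u(\Xx_t)$ is a continuous version of $\Zx$ by the Markov property. The gamma form in \eqref{prop:eq:baru} follows from Fubini, since the $\Gamma(1/2,\ttheta)$ density is $\sqrt{\ttheta/\pi}\,s^{-1/2}e^{-\ttheta s}$.

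Finally, the growth bound on $\bar u$ comes from the same Cauchy--Schwarz estimate, using $\E{\int_0^\infty e^{-\ta s}|\Xx_s|^{2r}\ds}\leqslant C(1+|x|^{2r})$ and the global bound on $\nabla_x\Xx$ obtained from $b,\sigma\in C^1_b$ with the discount $\ta$. The delicate points are the justification of the Malliavin representation under mere Lipschitz $f$, and the vanishing of the boundary term at infinity.
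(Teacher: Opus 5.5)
Your route differs from the paper's in how the horizon is truncated. You use the finite-horizon BSDE with terminal value $u(\Xx_T)$, which coincides with $(\Yx,\Zx)$ on $[0,T]$. Letting $T\to\infty$ then only requires killing the boundary term. Your argument for that along a sequence $T_n$ is fine, since $\Exp{|\Ux_T|^2}\leqslant CT^{-2}\int_0^T\Exp{\norm{\nabla_x\Xx_r}^2}\dd r\leqslant C_KT^{-2}e^{\ta T}$ and $\int_0^\infty e^{-\ta T}\Exp{|u(\Xx_T)|^2}\dd T<\infty$.

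The paper instead takes zero terminal data on $[0,N]$, so that Ma--Zhang \cite{ma:zhan:02:1} applies directly (with $f\in C^1_b$ first). It then has to prove $(Y^{N,x},Z^{N,x})\to(\Yx,\Zx)$ in the $e^{-\ta s}$-weighted $L^2$ norm, uniformly on bounded sets, and interchange limit and differentiation to get $u\in C^1$. Your version is lighter at the $T\to\infty$ stage. However, it moves the whole difficulty into Step 2: you need the Malliavin representation, and the $C^1$ property of $u$, for a finite-horizon problem whose terminal function $u$ is only known to be continuous with polynomial growth, and whose generator is only continuous in $x$. The paper only needs Ma--Zhang with zero terminal data, plus an a priori bound on $Z$ near time $0$.

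Your fallback for Step 2 does not close as written. After mollifying $u$ and $f$, stability only gives $\E{\int_0^T\norm{Z^k_s-\Zx_s}^2\ds}\to0$, while $\Exp{|\Ux_s|^2}$ behaves like $s^{-1}$ near $0$. So Cauchy--Schwarz cannot control $\E{\int_0^{\varepsilon}\norm{Z^k_s-\Zx_s}\,|\Ux_s|\ds}$. Without this you can pass to the limit neither in the representation formula nor, locally uniformly, in the gradients $\nabla_x v_k(0,\cdot)$ of the mollified value functions, and the latter is what yields $u\in C^1$.

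The missing ingredient is a pointwise Bismut--Elworthy bound $\norm{Z^k_s}\leqslant C(1+|\Xx_s|^r)$ on $[0,1]$ (for $T\geqslant 2$). It is uniform in $k$ because it depends only on the growth of the data and the Lipschitz constants in $(y,z)$; the paper takes it from \cite{Fuhrman-Tessitore-02}. Follow it with the splitting $[0,\varepsilon]\cup[\varepsilon,T]$: the first piece is $O(\sqrt{\varepsilon})$ uniformly on compacts, and the second is handled by $L^2$ stability. This is exactly the $\varepsilon$-argument in part 2.a of the paper's proof, and with it your route goes through.

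Two smaller points:
\begin{itemize}
\item The correct bound is $\Exp{|\Ux_s|^2}\leqslant Cs^{-2}\int_0^s\Exp{\norm{\nabla_x\Xx_r}^2}\dd r$. Your factor $s^{-1}$ would make $\Ux_s$ bounded in $L^2$ near $0$, contradicting the $s^{-1/2}$ singularity you invoke later.
\item Assumption \eqref{as:int:X:nablaX} controls $\int_0^\infty e^{-\ta s}\Exp{\norm{\nabla_x\Xx_s}^2}\ds$ only on bounded sets, and $b,\sigma\in C^1_b$ gives a uniform-in-$x$ bound only if $\ta>2K_b+K_\sigma^2$, which is not assumed. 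The growth of $\bar u$ is better read off the same Bismut--Elworthy bound at $s=0$.
\end{itemize}
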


The proof of Proposition \ref{prop:representationBSDEinfinitehorizon} is postponed to Section \ref{proof:prop:representationBSDEinfinitehorizon}.

\smallskip
\begin{Remark}
{Here, $a$, $\ta$, $\theta$ and $\ttheta$ are free parameters (up to constraints given in  \eqref{as:int:X:nablaX} and Proposition \ref{prop:representationBSDEinfinitehorizon}), which values will be adjusted later for the efficiency of numerical schemes.}
We introduce extra parameters $\theta$ and $\ttheta$ due to the theoretical study of the numerical error where we need sometimes $\theta \neq a$ and/or $\ttheta \neq {\ta}$. Moreover, we take $\tE \sim \Gamma(1/2,\ttheta)$ to control the explosion of order $t^{-1/2}$ in $0$ of $\Ux_t$.
\end{Remark}
\smallskip

\begin{Remark}
\label{rem:boundestimatesX}
 By using the Lipschitz assumption on $b$ and $\sigma$, and the boundedness on $\sigma$, standard estimates on $\Xx$ give us: for all $\varepsilon>0$ and $p\geqslant 1$,
 \begin{align*}
  \Exp{|\Xx_s|^{2p}} \leqslant C_{\varepsilon}(1+ |x|^{2p}) e^{2p(K_b + \varepsilon) s}, \quad \Exp{\norm{\nabla_x \Xx_s}^{2p}} \leqslant C e^{(2pK_b+p(2p-1)K_{\sigma}^2)s}, \quad \forall s\geqslant 0, x\in \bR^d.
 \end{align*}
Thus \eqref{prop:representationBSDEinfinitehorizon:hyp:rho} is satisfied as soon as $2rK_b < 2\mu-{K_{f,z}^2}(\1_{d'>1}\vee\1_{r>0})$ and \eqref{prop:representationBSDEinfinitehorizon:hyp:b} is satisfied as soon as $\ta >(2K_b+K_{\sigma}^2)\vee(2rK_b)$. It should be noted, however, that these constraints can be significantly reduced on a case-by-case basis. For example, under certain conditions of dissipativity on $b$ and $\sigma$, it may hold that \eqref{prop:representationBSDEinfinitehorizon:hyp:rho} and \eqref{prop:representationBSDEinfinitehorizon:hyp:b} are true for all $\nu>0$ and $\tilde{a}>0$: in this case, the only remaining condition is $0 < 2\mu-{K_{f,z}^2}(\1_{d'>1}\vee\1_{r>0})$.
\end{Remark}
}

\subsection{Contraction properties of the fixed point equation }\label{subsec:contraction}
{We have established that $(u,\bar{u})$ solves equations \eqref{prop:eq:u} and \eqref{prop:eq:baru} that can be seen as some fixed point equation. More precisely we define a map $\Phi$ such that, for all measurable functions $w={(w^1,w^2)}:\bR^d \to \bR^{d'}\times \bR^{d'\times d}$, $\Phi(w)$ is a function from $\bR^d$ to $\bR^{d'}\times \bR^{d'\times d}$ denoted by $(\Phi^1(w), \Phi^2(w))$, given as follows for all $x\in \bR^d$,
\begin{align}\label{ch4:phi}
\Phi(w)(x) = & \Bigg(\frac{1}{\theta }\E{  (f(\xxe, w^1(\xxe), w^2(\xxe)) + a w^1(\xxe))e^{-(a-\theta)E} },\\
& \quad \sqrt{\frac{\pi}{{\ttheta} }} \E{  \left(f(\Xx_\tE, w^1(\Xx_\tE), w^2(\Xx_\tE)) + \ta w^1(\Xx_\tE)\right)\sqrt{\tE}e^{-(\ta-\ttheta)\tE} \Ux_\tE }\Bigg).
\end{align}
{The continuity of the $\Phi$-image functions is established in Lemma \ref{lem:boundcont} below.}
Then, equations \eqref{prop:eq:u} and \eqref{prop:eq:baru} correspond to a fixed point of this map, $\Phi(v) = v$. Next, we study the contraction property of $\Phi$. First, we have to define a suitable norm on $C^0\left(\bR^d, \bR^{d'}\times \bR^{d'\times d}\right)$. Let ${\rho}:\bR^+ \to [1,+\infty)$ be a positive non decreasing weight function, such that $\lim_{x \rightarrow + \infty} {\rho}(x)=+\infty$, with a growth at least polynomial of degree $r$ and at most exponential: there exists $C>0$ such that
\begin{align} 
1 + C^{-1}x^r \leqslant {\rho}(x) \leqslant C e^{Cx}. 
\label{eq:growth:rho}
\end{align}
{Recall that the parameter $r$ is related to the growth condition of $f$ in  \eqref{as:generatorf}.}
Some standard choices will correspond to polynomial or exponential weighting, i.e. ${\rho}(x)=(1+x^\alpha)$ or ${\rho}(x)=\exp(\alpha x)$ for some parameter $\alpha\geqslant 0$. 
By a slight abuse of notation, we also define ${\rho}$ on $\bR^d$ by setting ${\rho}(x)={\rho}(|x|)$ for all $x \in \bR^d$.
The ${\rho}$-norm of a function $v:\bR^d \to \bR^{d'}\times \bR^{d'\times d}$ is defined by
\begin{align}
\label{eq:rho:norm}
\normr{{v}}:=\sup_{x\in \bR^d}\frac{\norm{{v}(x)}}{{\rho}(x)}.
\end{align}
Let us also define the following constants, which appear in the upcoming computations,
\begin{equation}\label{eq:condition:normA:rho}
\begin{aligned}
c_{\infty,\eqref{eq:condition:normA:rho}} &:= \sup_{x\in \bR^d} \E{ \frac{e^{-(a-\theta)E}}{\theta}\frac{{\rho}(X_E^x)}{ {\rho}(x)}},\\
\tilde{c}_{\infty,\eqref{eq:condition:normA:rho}} &:= \sup_{x \in \bR^d} \E{ \frac{\sqrt{\pi}\sqrt{\tE}e^{-(\ta-\ttheta)\tE}\norm{\Ux_{\tilde E}}}{\sqrt{\ttheta}}\frac{{\rho}(X_\tE^x)}{ {\rho}(x)}}.
\end{aligned}
\end{equation}
Let us also introduce the following assumption.
\begin{assumption}\label{as:suprho}
There exists $\epsilon>0$ such that for any {bounded} set $K\subset \bR^d$,
\begin{equation}
\label{eq:as:suprho}
\begin{aligned}
\sup_{x\in K} \E{e^{-(a-\theta)E}{\rho}(X_E^x)^{1+\epsilon}}<\infty \text{ and } \sup_{x\in K} \E{\sqrt{\tE}e^{-(\ta-\ttheta)\tE}\norm{\Ux_{\tilde E}}{\rho}(X_{\tilde E}^x)^{1+\epsilon}}<\infty.
\end{aligned}
\end{equation}
\end{assumption}}

{We state two preliminary lemmas before the study of the contraction of $\Phi$, {the proofs are postponed to Subsection \ref{proof:2lemmas}.}
\begin{Lemma}
  \label{lem:newLipcste}
  Let $h: \bR^{d'} \rightarrow \bR^{d'}$ a $K$-Lipschitz function that is $(-\mu)$-monotone with $0<\mu\leqslant K$. {Let $a\geqslant 0$,} then $h+a\bI_{d'}$ is $\sqrt{K^2-2\mu a+a^2}$-Lipschitz. 
 \end{Lemma}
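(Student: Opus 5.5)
The argument is a direct expansion of the squared norm. Fix $y,y'\in\bR^{d'}$, set $\delta:=y-y'$, and write $g:=h+a\bI_{d'}$, so that
\begin{align}
g(y)-g(y') = \big(h(y)-h(y')\big) + a\delta .
\end{align}
The plan is to expand $\norm{g(y)-g(y')}^2$ and bound each of the three terms by the two hypotheses on $h$:
\begin{align}
\norm{g(y)-g(y')}^2 = \norm{h(y)-h(y')}^2 + 2a\langle h(y)-h(y'),\delta\rangle + a^2\norm{\delta}^2 .
\end{align}

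For the first term, the $K$-Lipschitz property of $h$ gives $\norm{h(y)-h(y')}^2\leqslant K^2\norm{\delta}^2$. For the cross term, $(-\mu)$-monotonicity gives $\langle h(y)-h(y'),\delta\rangle\leqslant -\mu\norm{\delta}^2$. Since $a\geqslant 0$, multiplying by $2a$ keeps the direction of the inequality, so the cross term is at most $-2\mu a\norm{\delta}^2$. The last term is already $a^2\norm{\delta}^2$. Adding the three bounds yields
\begin{align}
\norm{g(y)-g(y')}^2\leqslant (K^2-2\mu a+a^2)\norm{\delta}^2 .
\end{align}

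It remains to check that $K^2-2\mu a+a^2\geqslant 0$, so that the square root is well defined. We have $K^2-2\mu a+a^2=(a-\mu)^2+K^2-\mu^2$, which is nonnegative because $\mu\leqslant K$. Taking square roots then gives the claim.

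There is no real obstacle. The only points needing care are the sign $a\geqslant 0$ used in the cross term and the nonnegativity of the constant under the square root.
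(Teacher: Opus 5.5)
Your proof is correct and follows the paper's argument exactly: expand the squared norm, then bound the three terms using the $K$-Lipschitz property, $(-\mu)$-monotonicity with $a\geqslant 0$, and $a^2$. Your additional check that $K^2-2\mu a+a^2=(a-\mu)^2+K^2-\mu^2\geqslant 0$, so the square root is well defined, is a small detail the paper leaves implicit.
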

\smallskip

\begin{Lemma}
\label{lem:boundcont} Assume  \eqref{as:generatorf}, \eqref{as:b:sigma}, \eqref{as:int:X:nablaX} and \eqref{as:suprho}. Assume additionally that   
 $a>\theta$, $\ta>\ttheta$, and that $c_{\infty,\eqref{eq:condition:normA:rho}}\lor \tilde{c}_{\infty,\eqref{eq:condition:normA:rho}} < \infty$. Let  $w = (w^1, w^2) \in C^0\left(\bR^d, \bR^{d'}\times \bR^{d'\times d}\right)$ such that $\normr{{w}}<+\infty$: then, $\normr{\Phi({w})} <+\infty$ and $\Phi({w})(\cdot)\in C^0\left(\bR^d, \bR^{d'}\times \bR^{d'\times d}\right)$.
\end{Lemma}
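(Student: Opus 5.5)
The proof has two parts: a weighted growth bound giving $\normr{\Phi(w)}<+\infty$, and continuity of $x\mapsto\Phi(w)(x)$ obtained by dominated convergence, with the needed uniform integrability supplied by Assumption \eqref{as:suprho}.

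\emph{Growth bound.} By \eqref{as:generatorf}, for $y=w^1(z)$ and $\zeta=w^2(z)$ we have
\[
|f(z,y,\zeta)+a y|\leqslant M_f(1+|z|^r)+(M_f+a)|w(z)|.
\]
Since \eqref{eq:growth:rho} gives $1+|z|^r\leqslant C\rho(z)$, and $|w(z)|\leqslant \normr{w}\rho(z)$, we get
\[
|f(z,w^1(z),w^2(z))+a w^1(z)|\leqslant C(1+\normr{w})\rho(z).
\]
The same bound holds with $\ta$ in place of $a$. Insert it into the definition \eqref{ch4:phi} of $\Phi$ and divide by $\rho(x)$. This yields
\[
|\Phi^1(w)(x)|/\rho(x)\leqslant C(1+\normr{w})\,c_{\infty,\eqref{eq:condition:normA:rho}},
\qquad
|\Phi^2(w)(x)|/\rho(x)\leqslant C(1+\normr{w})\,\tilde c_{\infty,\eqref{eq:condition:normA:rho}}.
\]
Both constants are finite by hypothesis, so $\normr{\Phi(w)}<+\infty$. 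This also shows the expectations defining $\Phi(w)$ are well defined.

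\emph{Continuity.} Fix $x^n\to x$ and let $K$ be a bounded set containing all the $x^n$. By the Lipschitz property of $b$ and $\sigma$ and standard SDE stability, $X^{x^n}_t\to X^x_t$ in $\bL_2$ for each fixed $t$. For the Malliavin weight, use the $C^1_b$ regularity in \eqref{as:b:sigma} to get stability of the tangent process, $\nabla_x X^{x^n}_s\to\nabla_x X^x_s$ in $\bL_2$ uniformly on $[0,t]$. Together with the continuity and boundedness of $\sigma^{-1}$ and $\sigma$, the Itô isometry then gives $U^{x^n}_t\to U^x_t$ in $\bL_2$ for each fixed $t>0$. Since $E$ and $\tE$ are independent of $W$, condition on them and pass to a.s.\ convergent subsequences. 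The integrands
\[
F_n:=\big(f(X^{x^n}_E,w^1(X^{x^n}_E),w^2(X^{x^n}_E))+a w^1(X^{x^n}_E)\big)e^{-(a-\theta)E}
\]
then converge in probability to the corresponding limit, because $f(\cdot,y,z)$ is continuous, $f$ is Lipschitz in $(y,z)$, and $w$ is continuous. The same holds for the $\tE$-term, which additionally carries the factor $\sqrt{\tE}\,U^{x^n}_\tE$.

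\emph{Main obstacle: uniform integrability.} Convergence in probability is not enough; we need a uniformly integrable family. By the growth bound above, $|F_n|\leqslant C e^{-(a-\theta)E}\rho(X^{x^n}_E)$. Write
\[
e^{-(a-\theta)E}\rho^{1+\epsilon}=\big(e^{-(a-\theta)E/(1+\epsilon)}\rho\big)^{1+\epsilon}\geqslant \big(e^{-(a-\theta)E}\rho\big)^{1+\epsilon},
\]
where the inequality uses $a>\theta$ so the exponential factor is at most $1$. Hence \eqref{eq:as:suprho} gives a uniform $\bL_{1+\epsilon}$ bound on $F_n$ over $x^n\in K$, and Vitali's theorem gives convergence of $\Phi^1(w)(x^n)$ to $\Phi^1(w)(x)$.

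The $\Phi^2$ term is harder, because the weight $\sqrt{\tE}\,e^{-(\ta-\ttheta)\tE}\norm{U_\tE}$ appears only to the first power in \eqref{eq:as:suprho}. To handle it I would view
\[
\mu_n(A):=\E{\sqrt{\tE}\,e^{-(\ta-\ttheta)\tE}\norm{U^{x^n}_\tE}\,\1_A}
\]
as a finite measure. Then \eqref{eq:as:suprho} states that $\rho(X^{x^n}_\tE)$ is bounded in $L^{1+\epsilon}(\mu_n)$, uniformly in $n$. Truncating $\rho$ at level $R$, this bound makes the contribution from the set $\{\rho(X^{x^n}_\tE)>R\}$ at most $C R^{-\epsilon}$, uniformly in $n$. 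On the complement the integrand is bounded by $C R\,\sqrt{\tE}\,e^{-(\ta-\ttheta)\tE}\norm{U^{x^n}_\tE}$.

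These last dominating weights are uniformly integrable, since by the Itô isometry and the tangent-process moments of Remark \ref{rem:boundestimatesX},
\[
\E{\norm{U^{x}_t}^2}\leqslant C t^{-1}e^{Ct}.
\]
The factor $\sqrt{\tE}$ cancels the $t^{-1/2}$ singularity, and the exponential tail is integrable once $\ta-\ttheta$ is large enough or by using \eqref{prop:representationBSDEinfinitehorizon:hyp:b}. Letting $n\to\infty$ and then $R\to\infty$ yields continuity of $\Phi^2(w)$. This truncation step is where I expect the most technical care to be needed.
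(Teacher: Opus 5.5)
Your proof is correct, but for the continuity of $\Phi^2(w)$ it takes a different route from the paper.

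The paper splits $\Phi(w)(x^n)-\Phi(w)(x)$ into three terms:
\begin{itemize}
\item $\xi^n$, the $E$-term;
\item $\zeta^n$, the $\tE$-term with the weight frozen at $U^x_\tE$;
\item $\chi^n$, the integrand at $x^n$ multiplied by $U^x_\tE-U^{x^n}_\tE$.
\end{itemize}
It handles $\xi^n$ and $\zeta^n$ by a.s.\ convergence, truncation on $\{|X^{x^n}|>M\}$ and dominated convergence. It handles $\chi^n$ by H\"older's inequality, then a Burkholder--Davis--Gundy/Fubini argument showing $\E{\sqrt{\tE}e^{-(\ta-\ttheta)\tE}|U^x_\tE-U^{x^n}_\tE|}\to0$.

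You instead show that the whole integrands, with weight $U^{x^n}_\tE$, converge in probability and are uniformly integrable, and conclude by Vitali. The advantage is that $\rho(X^{x^n}_\tE)$ is only ever paired with $U^{x^n}_\tE$, which is exactly what \eqref{as:suprho} controls. The paper's $\zeta^n$ and its H\"older step for $\chi^n$ produce mixed products such as $\rho(X^{x^n}_\tE)^{1+\epsilon}|U^x_\tE|$, which \eqref{as:suprho} does not literally bound. In return, the paper makes the convergence of the weights fully explicit. Its Fubini computation is essentially the $\bL_2$-type control of the weight that your uniform integrability step needs.

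Two details need tightening.

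\emph{The exponential tail.} Only your second option is available, since ``$\ta-\ttheta$ large enough'' is not a hypothesis of the lemma. The crude bound $\E{\norm{\Ux_t}^2}\leqslant Ct^{-1}e^{(2K_b+K_\sigma^2)t}$ would require $2\ta-\ttheta>2K_b+K_\sigma^2$. Use \eqref{prop:representationBSDEinfinitehorizon:hyp:b} instead. For $x$ in a bounded set $K$ and $t\geqslant1$ it gives $\int_0^t\E{\norm{\nabla_x\Xx_s}^2}\ds\leqslant C_Ke^{\ta t}$, hence $\E{\norm{\Ux_t}^2}\leqslant C_Kt^{-2}e^{\ta t}$. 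The part of $\E{\tE e^{-2(\ta-\ttheta)\tE}\norm{\Ux_\tE}^2}$ on $\{\tE\geqslant1\}$ is then at most $C_K\int_1^{+\infty}t^{-3/2}e^{-(\ta-\ttheta)t}\dt<+\infty$. Small times are handled by $\sup_{s\leqslant1,x}\E{\norm{\nabla_x\Xx_s}^2}<+\infty$.

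\emph{The truncation step.} Read it as a proof of uniform integrability of the whole family of $\tE$-integrands. The tail part is at most $CR^{-\epsilon}$ by \eqref{as:suprho}, and the remainder is dominated by $CR$ times an $\bL_2$-bounded family. Do not pass to the limit in the integrands multiplied by $\1_{\{\rho(X^{x^n}_\tE)\leqslant R\}}$: those indicators need not converge without an extra no-atom argument. A continuous cutoff works equally well.
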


Then we have the following estimates, {that are our main results regarding the contraction property of $\Phi$.}}

\begin{Proposition}[$L_p$-estimate, $1< p \leqslant +\infty$] ~
\label{pr:Lpestimate}
Assume  \eqref{as:generatorf}, \eqref{as:b:sigma} and \eqref{as:int:X:nablaX}.
\begin{description} 
\item [$\rhd$ The case $p=+\infty$.] Assume \eqref{as:suprho}, $a>\theta$, $\ta>\ttheta$ and $c_{\infty,\eqref{eq:condition:normA:rho}} \vee \tilde{c}_{\infty,\eqref{eq:condition:normA:rho}} < +\infty$. Then, for any functions $w_1$, $w_2$ in $C^0(\bR^d, \bR^{d'}\times \bR^{d'\times d})$, we have
\begin{align*}
\normr{\Phi(w_1)-\Phi(w_2)}\leqslant  \kappa_\infty \normr{w_1-w_2},
\end{align*}
where 
{$$\kappa_\infty = \sqrt{ c_{\infty,\eqref{eq:condition:normA:rho}}^2  \left(\sqrt{K_{f,y}^2 - 2\mu a + a^2} \lor  K_{f,z} \right)^2 + \tilde{c}_{\infty,\eqref{eq:condition:normA:rho}}^2 \left(\sqrt{K_{f,y}^2 - 2\mu \ta + \ta^2} \lor K_{f,z} \right)^2}. $$}
\item [$\rhd$ The case $p\in (1,+\infty)$.] 
Assume that there exist finite positive constants $c_{p,\eqref{eq:condition:stability:rho}}$, $\tilde{c}_{p,\eqref{eq:condition:stability:rho}}$, $\tilde{c}_{p,\eqref{eq:condition:stability:rho:bis}}$ {and a probability measure $\meas$ on $\bR^d$}, satisfying $\int_{\mathbb{R}^d} |x|^r \meas(\dx)<+\infty$, such that for any measurable function $w:\bR^d \to \bR^{d'}\times \bR^{d'\times d}$ with a polynomial growth $r$ we have
\begin{equation}
\begin{aligned}
\label{eq:condition:stability:rho}
\left(\int_{\bR^d} \E{ {\norm{w(\xxe)} }^p} \meas(\dx) \right)^{1/p} &\leqslant 
c_{p,\eqref{eq:condition:stability:rho}} \left(\int_{\bR^d} {\norm{w(x)}}^p \meas(\dx)\right)^{1/p}<+\infty, \\
\left(\int_{\bR^d} \E{ {\norm{w(\xxet)} }^p} \meas(\dx) \right)^{1/p} &\leqslant 
\tilde{c}_{p,\eqref{eq:condition:stability:rho}} \left(\int_{\bR^d} {\norm{w(x)}}^p \meas(\dx)\right)^{1/p}<+\infty,
\end{aligned}
\end{equation}
and 
\begin{equation}
  \label{eq:condition:stability:rho:bis}
\sup_{x \in \bR^d}\Exp{\left|\sqrt{\frac{\pi}{\ttheta}}U_\tE^x \sqrt{\tE}e^{-(\ta-\ttheta)\tE}\right|^{\frac{p}{p-1}}}^{p-1} = \tilde{c}_{p,\eqref{eq:condition:stability:rho:bis}}<+\infty.
\end{equation}

Then, for any measurable functions $w_1,w_2 : \bR^d \to \bR^{d'}\times \bR^{d'\times d}$ with a polynomial growth $r$, we have
\begin{align}
\left(\int_{\bR^d} { \norm{\Phi(w_1)(x)-\Phi(w_2)(x)}} ^p \meas(\dx)\right)^\frac{1}{p } &\leqslant 
  \kappa_p \left(\int_{\bR^d} { \norm{w_1(x)-w_2(x)}}^p \meas(\dx)\right)^\frac{1}{p},
\end{align}
where
\begin{align}
\label{eq:kappa_p}
\kappa_p =& \Bigg( \dfrac{1}{\theta}\left(\dfrac{p-1}{ap-\theta}\right)^{p-1}c_{p,\eqref{eq:condition:stability:rho}}^p  \left(\sqrt{K_{f,y}^2 - 2\mu a + a^2} \vee K_{f,z} \right)^p  \\
&  +\tilde{c}_{p,\eqref{eq:condition:stability:rho:bis}} \tilde{c}_{p,\eqref{eq:condition:stability:rho}}^p  \left(\sqrt{K_{f,y}^2 - 2\mu \ta + \ta^2} \vee K_{f,z} \right)^p \Bigg)^\frac{1}{p}.
\end{align}
\end{description}

\end{Proposition}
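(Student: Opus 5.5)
The plan is to treat the two components $\Phi^1$ and $\Phi^2$ separately and combine them at the end. Everything rests on one pointwise Lipschitz bound for the integrand $F(x,w):=f(x,w^1(x),w^2(x))+a\,w^1(x)$. Write $\delta w=w_1-w_2=(\delta w^1,\delta w^2)$. Split
$$F(x,w_1)-F(x,w_2)=\big[f(x,w_1^1,w_1^2)+aw_1^1-f(x,w_2^1,w_1^2)-aw_2^1\big]+\big[f(x,w_2^1,w_1^2)-f(x,w_2^1,w_2^2)\big].$$
By Lemma \ref{lem:newLipcste}, applied to $y\mapsto f(x,y,w_1^2(x))$, which is $K_{f,y}$-Lipschitz and $(-\mu)$-monotone, the first bracket is bounded by $L_a|\delta w^1|$ with $L_a=\sqrt{K_{f,y}^2-2\mu a+a^2}$. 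The second bracket is bounded by $K_{f,z}|\delta w^2|$. Cauchy--Schwarz in $\bR^2$ then gives
$$|F(x,w_1)-F(x,w_2)|\leqslant (L_a\vee K_{f,z})(|\delta w^1|+|\delta w^2|)\leqslant \sqrt2\,(L_a\vee K_{f,z})|\delta w|.$$
The stated constant has no $\sqrt2$, so the split must be sharper. I would use the orthogonal decomposition $|\delta w|^2=|\delta w^1|^2+|\delta w^2|^2$ and bound $L_a|\delta w^1|+K_{f,z}|\delta w^2|\leqslant (L_a\vee K_{f,z})(|\delta w^1|+|\delta w^2|)$. Whether the norm convention on the pair (Frobenius-type Euclidean, or possibly a max) makes the $\sqrt 2$ disappear is the delicate bookkeeping point, and I would check it against the constant in the statement. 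The same bound holds with $\ta$ in place of $a$.

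\textbf{Case $p=+\infty$.} For each $x$, I bound $|\Phi^1(w_1)(x)-\Phi^1(w_2)(x)|$ by
$$\frac1\theta\E{e^{-(a-\theta)E}(L_a\vee K_{f,z})|\delta w(\xxe)|}\leqslant (L_a\vee K_{f,z})\normr{\delta w}\,\E{\frac{e^{-(a-\theta)E}}{\theta}\rho(\xxe)}.$$
Dividing by $\rho(x)$ gives at most $c_{\infty}(L_a\vee K_{f,z})\normr{\delta w}$. In the same way, bringing the norm of $\Ux_\tE$ inside, the second component is bounded by $\tilde c_\infty(L_{\ta}\vee K_{f,z})\normr{\delta w}$. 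Finally $|\Phi(w_1)-\Phi(w_2)|^2=|\Phi^1\text{-diff}|^2+|\Phi^2\text{-diff}|^2$; dividing by $\rho(x)^2$ and taking the supremum yields $\kappa_\infty$. Assumption \eqref{as:suprho} and Lemma \ref{lem:boundcont} are only needed so that the quantities involved are finite; the condition $\normr{\delta w}<\infty$ is harmless, since otherwise the inequality is trivial.

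\textbf{Case $1<p<\infty$.} For the first component I apply Hölder with respect to the measure $e^{-(a-\theta)E}\,\dd\bP$, splitting the weight as $e^{-(a-\theta)E}=e^{-(a-\theta)E/p'}e^{-(a-\theta)E/p}$ with $p'=p/(p-1)$:
$$|\delta\Phi^1(x)|^p\leqslant \theta^{-p}\E{e^{-(a-\theta)E}}^{p-1}\E{e^{-(a-\theta)E}|\delta F(\xxe)|^p}.$$
To recover the exact constant, I would instead write the expectation as $\int_0^\infty e^{-as}\E{\delta F(\Xx_s)}\ds$, apply Hölder with $e^{-as}=e^{-as/p'}e^{-as/p}$ after reweighting by $e^{-\theta s}$, and compute $\int e^{-(ap-\theta)s/(p-1)}\ds=(p-1)/(ap-\theta)$. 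This reproduces the factor $\frac1\theta\big(\frac{p-1}{ap-\theta}\big)^{p-1}$ together with $\E{|\delta F(\xxe)|^p}$. Next I integrate in $\meas(\dx)$ and use \eqref{eq:condition:stability:rho} with $w=|\delta w|$, which has polynomial growth $r$, so the left side is finite. For the second component, Hölder with exponents $p'$ and $p$ on the product $\big(\sqrt{\pi/\ttheta}\,\Ux_\tE\sqrt\tE e^{-(\ta-\ttheta)\tE}\big)\cdot\delta F$ gives the factor $\tilde c_{p,\eqref{eq:condition:stability:rho:bis}}$, uniformly in $x$, times $\E{|\delta F(\xxet)|^p}$. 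Integrating in $\meas$ and applying the second line of \eqref{eq:condition:stability:rho} finishes the bound.

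To combine the two components I use $|(A,B)|^p\leqslant$ a sum of the $p$-th powers. This holds exactly for $p\geqslant2$, since $(\alpha^2+\beta^2)^{p/2}\geqslant\alpha^p+\beta^p$ goes the wrong way, so I would need to check which pair norm is intended; for general $p$ one may need a constant $2^{|p/2-1|}$. \textbf{The main obstacle} is this bookkeeping of how the pair norm interacts with the Lipschitz split and with the summation of the two components. The analytic steps (Hölder plus the stability assumptions) are routine.
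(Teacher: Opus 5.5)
Your route is the paper's route: Lemma~\ref{lem:newLipcste} on the $y$-part plus the $z$-Lipschitz bound, then the weights $c_\infty,\tilde c_\infty$ for $p=+\infty$. For $p<\infty$ it is H\"older, producing exactly $\frac1\theta(\frac{p-1}{ap-\theta})^{p-1}$ and the Malliavin-weight constant, followed by the stability conditions.

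\textbf{The $\sqrt2$ gap cannot be closed.} The point you leave open is a genuine gap, and your own $p=+\infty$ display uses the unproved bound $|\delta F|\le (L_a\vee K_{f,z})|\delta w|$, with $L_a=\sqrt{K_{f,y}^2-2\mu a+a^2}$. The paper's proof makes the same move. It asserts $L_a\|w_1^1-w_2^1\|_\rho+K_{f,z}\|w_1^2-w_2^2\|_\rho\le (L_a\vee K_{f,z})\|w_1-w_2\|_\rho$. For the Euclidean norm on pairs, which the final recombination $\sqrt{c_\infty^2(\cdot)^2+\tilde c_\infty^2(\cdot)^2}$ presupposes, this loses up to $\sqrt2$. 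No finer bookkeeping helps, because the stated constant is false.

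Counterexample: take $d=d'=1$, $b=0$, $\sigma=1$, so $X^x=x+W$ and $U^x_t=W_t/t$. Let $f(x,y,z)=-\mu y+Kz$, so $K_{f,y}=\mu$, $K_{f,z}=K$, $r=0$ and $L_a=|a-\mu|$. Take $w_1-w_2\equiv(\alpha,\beta)$ constant.
\begin{itemize}
\item $\Phi^2(w_1)-\Phi^2(w_2)\equiv 0$, since $\mathbb E[W_t]=0$.
\item $\Phi^1(w_1)-\Phi^1(w_2)\equiv((a-\mu)\alpha+K\beta)/a$.
\item Since $\rho$ is minimal at $0$, the ratio $\|\Phi(w_1)-\Phi(w_2)\|_\rho/\|w_1-w_2\|_\rho$ equals $|(a-\mu)\alpha+K\beta|/(a|(\alpha,\beta)|)$. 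This is $\sqrt{(a-\mu)^2+K^2}/a$ for $(\alpha,\beta)\propto(a-\mu,K)$.
\end{itemize}
Now choose $a-\mu=K$ and $\tilde a=a=\sqrt2K$; this is admissible since $a>\mu>0$ and $\tilde a\ge 2\mu$. Take $\rho(x)=2+\eta|x|$ with $\eta\downarrow0$. Then $c_\infty\to1/a$ and $\tilde c_\infty\to\sqrt{2/\tilde a}$, so the stated $\kappa_\infty^2\to\frac12+\sqrt2K$. This is $<1$ for small $K$, while the true ratio is $1$.

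The $p=2$ constant fails the same way. Take $\mathcal M$ with density proportional to $(1+\lambda|x|)^{-2}$, let $\lambda\downarrow0$, $\theta\uparrow a$ and $\tilde\theta\uparrow\tilde a$. The optimal stability constants then tend to $1$, and the stated $\kappa_2^2$ tends to $\frac12+\pi K/\sqrt2$, again against a true ratio $1$.

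\textbf{What your argument does prove.} Use the sharp pointwise bound from Cauchy--Schwarz, $L|\delta w^1|+K_{f,z}|\delta w^2|\le\sqrt{L^2+K_{f,z}^2}\,|\delta w|$. Everything then goes through with $\sqrt{K_{f,y}^2-2\mu a+a^2+K_{f,z}^2}$ (and its $\tilde a$-analogue) in place of the maxima. Your suggested bound $(L_a\vee K_{f,z})(|\delta w^1|+|\delta w^2|)$ does not help, since $|\delta w^1|+|\delta w^2|\le\sqrt2|\delta w|$ is attained.

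\textbf{The recombination issue, with its direction corrected.} Your second worry is also real, but you have it backwards. The inequality $(A^2+B^2)^{p/2}\le A^p+B^p$ holds for $p\le 2$, by subadditivity of $t\mapsto t^{p/2}$, and fails for $p>2$. So summing the $p$-th powers of the two components, which the paper does without comment, is legitimate only for $p\in(1,2]$. For $p>2$, Minkowski in $L^{p/2}(\mathcal M)$ gives
$\|\delta\Phi\|_{L^p(\mathcal M)}\le\bigl(\|\delta\Phi^1\|_{L^p(\mathcal M)}^2+\|\delta\Phi^2\|_{L^p(\mathcal M)}^2\bigr)^{1/2}$.
The component constants therefore combine as $\sqrt{A^2+B^2}$ rather than $(A^p+B^p)^{1/p}$.
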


\begin{proof}~

{$\rhd$ Let $p=+\infty$. We can assume that the functions $w_1,\ w_2$ and $w_1-w_2$ have a finite ${\rho}$-norm, otherwise the above inequality is trivial. From Lemma \ref{lem:boundcont}, $\Phi(w_1)$ and $\Phi(w_2)$ have finite ${\rho}$-norms and {are continuous}.
 We denote $w_1=(w_1^1,w_1^2)$ and $w_2 = (w_2^1,w_2^2)$. Using Lemma \ref{lem:newLipcste}, we have for any $x\in \bR^d$,}
{
\begin{align}
& \frac{ |\Phi^1(w_1)(x) - \Phi^1(w_2)(x)|}{{\rho}(x)} \\
 \leqslant& \dfrac{1}{\theta {\rho}(x)}\E{e^{-(a-\theta)E}\norm{|f(\xxe, w_1^1(\xxe), w_1^2(\xxe)) - f(\xxe, w_2^1(\xxe), w_2^2(\xxe)) + a \big(w_1^1(\xxe) - w_2^1(\xxe)\big)}}\nonumber\\
 \leqslant&  ({K_{f,y}^2 -2\mu a + a^2})^{1/2} \E{\dfrac{|w_1^1(\xxe) - w_2^1(\xxe)|}{{\rho}(\norm{\xxe})}\dfrac{{\rho}(\norm{\xxe})}{{\rho}(x)}\dfrac{e^{-(a-\theta)E}}{\theta}}\\
& + K_{f,z}\E{\dfrac{\norm{w_1^2(\xxe) - w_2^2(\xxe)}}{{\rho}(\norm{\xxe})}\dfrac{{\rho}(\norm{\xxe})}{{\rho}(x)} \dfrac{e^{-(a-\theta)E}}{\theta} } \\
\leqslant&  c_{\infty,\eqref{eq:condition:normA:rho}} \left( \sqrt{K_{f,y}^2 - 2\mu a + a^2}\normr{w_1^1 - w_2^1} + K_{f,z}\normr{w_1^2-w_2^2} \right)\\
\leqslant& c_{\infty,\eqref{eq:condition:normA:rho}}  \left(\sqrt{K_{f,y}^2 - 2\mu a + a^2} \lor  K_{f,z} \right) \normr{w_1 - w_2} .
\end{align}}
{Following same steps as above for $\Phi^2$, we obtain,
{\begin{align}
\frac{ \norm{\Phi^2(w_1)(x) - \Phi^2(w_2)(x)}}{{\rho}(x)} \leqslant \tilde{c}_{\infty,\eqref{eq:condition:normA:rho}}\left(\sqrt{K_{f,y}^2 - 2\mu \ta + \ta^2} \lor  K_{f,z} \right) \normr{w_1 - w_2}.
\end{align}}
Therefore, the advertised result follows for $p=+\infty$.}

$\rhd$ Next, for $p\in (1, +\infty)$, we can use Lemma \ref{lem:newLipcste} and H\"older inequality to obtain for the first component,
\begin{align*}
&\int_{\bR^d} { |\Phi^1(w_1)(x)-\Phi^1(w_2)(x)|} ^p \meas(\dx)\\
& \leqslant \dfrac{1}{\theta^p}\int_{\bR^d}  \left(\sqrt{K_{f,y}^2 - 2\mu a + a^2} \vee K_{f,z}\right)^p \E{\norm{w_1(\xxe)-w_2(\xxe)}e^{-(a-\theta)E}}^p  \meas(\dx)\\
& \leqslant \E{\left(\frac{e^{-(a-\theta)E}}{\theta}\right)^{\frac{p}{p-1}}}^{p-1}\int_{\bR^d}  \left(\sqrt{K_{f,y}^2 - 2\mu a + a^2} \vee K_{f,z}\right)^p \E{\norm{w_1(\xxe)-w_2(\xxe)}^p}  \meas(\dx)\\
& \leqslant \dfrac{1}{\theta}\left(\dfrac{p-1}{ap-\theta}\right)^{p-1}c_{p,\eqref{eq:condition:stability:rho}}^p\left(\sqrt{K_{f,y}^2 - 2\mu a + a^2} \vee K_{f,z}\right)^p \int_{\bR^d}   \norm{w_1(x)-w_2(x)}^p  \meas(\dx).
\end{align*}

Next, similarly for the second component,
\begin{align*}
&\int_{\bR^d} { \norm{\Phi^2(w_1)(x)-\Phi^2(w_2)(x)}} ^p \meas(\dx)\\
\leq&  \left(\sqrt{\dfrac{\pi}{\ttheta}}\right)^p \left(\sqrt{K_{f,y}^2 - 2\mu \ta + \ta^2} \vee K_{f,z}\right)^p \int_{\bR^d}   \E{\norm{w_1(\xxe)-w_2(\xxe)} \norm{\Ux_\tE \sqrt{\tE}e^{-(\ta - \ttheta)\tE} } }^p  \meas(\dx)\\
\leqslant & \left(\sqrt{\dfrac{\pi}{\ttheta}}\right)^p \left(\sqrt{K_{f,y}^2 - 2\mu \ta + \ta^2} \vee K_{f,z}\right)^p \times \\
& \int_{\bR^d}   \E{{\norm{w_1(\xxet) - w_2(\xxet)} }^p} \E{\norm{\Ux_\tE\sqrt{\tE}e^{-(\ta - \ttheta)\tE}}^\frac{p}{p-1}}^{p-1}  \meas(\dx)\\
\leqslant & \left(\sqrt{K_{f,y}^2 - 2\mu \ta + \ta^2} \vee K_{f,z}\right)^p c_{p,\eqref{eq:condition:stability:rho:bis}} c_{p,\eqref{eq:condition:stability:rho}}^p \int_{\bR^d}   \E{{\norm{w_1(x) - w_2(x)} }^p}  \meas(\dx)
\end{align*}
where we have used H\"older's inequality. Thus, we obtain the announced result.
\end{proof}

Similarly, when there is no dependance on $z$ in the generator one can prove a special case of this result, stated below.
\begin{Proposition}\label{ch4:prop:genY}
When the generator of the BSDE \eqref{eq:BSDEinfinitehorizon} does not depend on the $Z$ process, that is $f(x, y, z) = f(x, y)$, the conclusions of Proposition \ref{pr:Lpestimate} remain true under the same assumptions and with the following contraction constants:
\begin{align*}
\kappa_\infty &= c_{\infty,\eqref{eq:condition:normA:rho}}  \sqrt{K_{f,y}^2 - 2\mu a + a^2},\\
\kappa _p &= \left(\dfrac{1}{\theta}\right)^{\frac1p}\left(\dfrac{p-1}{ap-\theta}\right)^{\frac{p-1}{p}}c_{p,\eqref{eq:condition:stability:rho} } \sqrt{K_{f,y}^2 - 2\mu a + a^2}\quad\text{for}\quad p \in (1,+\infty).
\end{align*}
\end{Proposition}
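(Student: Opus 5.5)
The key observation is that when $f(x,y,z)=f(x,y)$, the first component $\Phi^1(w)$ of the map \eqref{ch4:phi} depends on $w=(w^1,w^2)$ only through $w^1$. The same holds for $\Phi^2(w)$. The fixed point problem \eqref{prop:eq:u}--\eqref{prop:eq:baru} therefore decouples. One first solves the reduced fixed point equation $u=\Phi^1(u)$ for functions $\bR^d\to\bR^{d'}$, and then obtains $\bar u$ explicitly as $\bar u=\Phi^2(u)$, with no iteration needed on the second component. I would read the statement of Proposition \ref{ch4:prop:genY} as the contraction estimate for this reduced map $w^1\mapsto\Phi^1(w^1)$, measured in $\normr{\cdot}$ (resp. the $L_p(\meas)$ norm) on $\bR^{d'}$-valued functions. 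The proof then consists of rerunning the first half of the proof of Proposition \ref{pr:Lpestimate} with the $z$-terms deleted.

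For $p=+\infty$, I take $w_1^1,w_2^1$ continuous with finite $\rho$-norm; Lemma \ref{lem:boundcont} applies verbatim, with $w^2$ arbitrary, e.g. $0$. By Lemma \ref{lem:newLipcste} applied to $h=f(x,\cdot)$, which is $K_{f,y}$-Lipschitz and $(-\mu)$-monotone, the function $y\mapsto f(x,y)+ay$ is $\sqrt{K_{f,y}^2-2\mu a+a^2}$-Lipschitz. Hence
\begin{align*}
\frac{|\Phi^1(w_1^1)(x)-\Phi^1(w_2^1)(x)|}{\rho(x)}\leqslant \sqrt{K_{f,y}^2-2\mu a+a^2}\,
\E{\frac{|w_1^1(\xxe)-w_2^1(\xxe)|}{\rho(\xxe)}\frac{\rho(\xxe)}{\rho(x)}\frac{e^{-(a-\theta)E}}{\theta}}.
\end{align*}
Bounding the first ratio inside the expectation by $\normr{w_1^1-w_2^1}$ and taking the supremum in $x$ via the definition of $c_{\infty,\eqref{eq:condition:normA:rho}}$ in \eqref{eq:condition:normA:rho} gives $\kappa_\infty$. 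Note that the maximum with $K_{f,z}$ disappears, since that term is now absent.

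For $p\in(1,+\infty)$, I use the same pointwise Lipschitz bound and then H\"older's inequality in the expectation with exponents $p/(p-1)$ and $p$. This separates the weight from $|w_1^1(\xxe)-w_2^1(\xxe)|$. The weight contributes the explicit factor
$$\E{\bigl(e^{-(a-\theta)E}/\theta\bigr)^{p/(p-1)}}^{p-1}=\theta^{-1}\Bigl(\tfrac{p-1}{ap-\theta}\Bigr)^{p-1},$$
which is a direct computation with the $\cE(\theta)$ density and requires $ap>\theta$, guaranteed by $a>\theta$. Integrating in $\meas(\dx)$ and using the first inequality of \eqref{eq:condition:stability:rho} with $w=w_1^1-w_2^1$ gives the constant $c_{p,\eqref{eq:condition:stability:rho}}^p$. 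Taking $p$-th roots yields $\kappa_p$.

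There is no real obstacle here. The only point requiring care is the interpretation: one must specify that the contraction is for the reduced map on the first component, with $\bar u$ recovered afterwards. Otherwise the $\Phi^2$ term would reappear, and the claimed constants would not bound the full map $\Phi$.
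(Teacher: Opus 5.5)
Your proof is correct and follows the paper's route. The paper says the result is proved ``similarly'', i.e.\ by rerunning the $\Phi^1$ half of the proof of Proposition~\ref{pr:Lpestimate} with the $K_{f,z}$ term deleted: Lemma~\ref{lem:newLipcste} plus the definition \eqref{eq:condition:normA:rho} for $p=+\infty$, and H\"older with the explicit exponential moment plus \eqref{eq:condition:stability:rho} for $p<+\infty$. Your reading of the statement as a contraction for the reduced map $w^1\mapsto\Phi^1(w^1)$, with $\bar u$ recovered afterwards, is exactly how the paper uses it in Section~\ref{section:checking:BSDE}, where \eqref{prop:representationBSDEinfinitehorizon:eq:u:analysis} is considered without $\bar u$.
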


Now a straightforward application of the Banach fixed-point theorem yields the following result.
\begin{Corollary} 
\label{cor:contraction}
Let $p\in (1,+\infty]$. 
Assume  \eqref{as:generatorf}, \eqref{as:b:sigma} and \eqref{as:int:X:nablaX}.
\begin{description} 
\item [(case $p=+\infty$)]: Assume \eqref{as:suprho}, $a>\theta$, $\ta>\ttheta$,  $\kappa_\infty < 1$ and $\normr{\Phi(0)}<+\infty$,
then there is a unique continuous function $v: \bR^d \to \bR^{d'}\times \bR^{d'\times d}$ with  $\displaystyle\sup_{x\in \bR^d}\dfrac{\norm{v(x)}}{{\rho}(x) }<+\infty$  satisfying  $v=\Phi(v),$
i.e. solving \eqref{prop:eq:u} and \eqref{prop:eq:baru}. Moreover the solution $v$ satisfies the following bound: $\displaystyle\normr{v}\leqslant \frac{1}{1- \kappa_\infty} \normr{\Phi(0)}$.
\item [(case $p \in (1,+\infty)$)]: Assume that there exists a probability measure $\meas$ on $\bR^d$ satisfying $\int_{\mathbb{R}^d} |x|^r \meas(\dx)<+\infty$ and such that \eqref{eq:condition:stability:rho} holds. Assume also  $\kappa_p < 1$ and $\left(\displaystyle\int_{\bR^d} |\Phi(0)(x)|^p \meas(\dx)\right)^\frac{1}{p } < +\infty$.
\end{description}
Then, there is a unique measurable function $v: \bR^d \to \bR^{d'}\times \bR^{d'\times d}$ with   $\left(\displaystyle\int_{\bR^d}\norm{v(x)}^p \meas(\dx)\right)^\frac{1}{p }<+\infty$ satisfying  $v=\Phi(v)$, i.e. solving \eqref{prop:eq:u} and \eqref{prop:eq:baru}.
Moreover the solution $v$ satisfies the following bound: $\displaystyle\left(\int_{\bR^d}\norm{v(x)}^p \meas(\dx) \right)^\frac{1}{p }\leqslant \frac{1}{1- \kappa_p} \left(\int_{\bR^d}\norm{\Phi(0)(x)}^p \meas(\dx)\right)^\frac{1}{p }$.
\end{Corollary}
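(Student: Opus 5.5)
The plan is to apply the Banach fixed-point theorem to $\Phi$ on a suitable complete metric space, separately in the two cases, with Proposition \ref{pr:Lpestimate} supplying the contraction.

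\textbf{Case $p=+\infty$.} Let $\mathcal{B}_\rho$ be the space of continuous $v:\bR^d\to\bR^{d'}\times\bR^{d'\times d}$ with $\normr{v}<+\infty$, normed by $\normr{\cdot}$. I will first check that $\mathcal{B}_\rho$ is a Banach space. If $(v_n)$ is Cauchy, then $v_n/\rho$ converges uniformly on $\bR^d$. Since $\rho$ is continuous and bounded on compacts (it is nondecreasing with at most exponential growth), $v_n$ converges locally uniformly to a continuous limit $v$, and $\normr{v_n-v}\to0$.

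Next, $\Phi$ maps $\mathcal{B}_\rho$ into itself by Lemma \ref{lem:boundcont}; its hypotheses $a>\theta$, $\ta>\ttheta$, \eqref{as:suprho} and finiteness of the constants in \eqref{eq:condition:normA:rho} are all assumed. (Finiteness of $c_\infty,\tilde c_\infty$ is implicit in $\kappa_\infty<1$.) Proposition \ref{pr:Lpestimate} gives $\normr{\Phi(w_1)-\Phi(w_2)}\le\kappa_\infty\normr{w_1-w_2}$ with $\kappa_\infty<1$. Banach's theorem therefore yields a unique fixed point $v\in\mathcal{B}_\rho$. For the a priori bound, write
\[
\normr{v}\le\normr{\Phi(v)-\Phi(0)}+\normr{\Phi(0)}\le\kappa_\infty\normr{v}+\normr{\Phi(0)},
\]
and rearrange.

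\textbf{Case $p\in(1,\infty)$.} Work in $L^p(\meas)$ of equivalence classes of measurable functions. This space is complete by Riesz--Fischer. The delicate point is that Proposition \ref{pr:Lpestimate} is stated only for measurable $w$ with polynomial growth $r$, and I expect this to be the main obstacle. Two issues must be settled: that $\Phi$ is well defined on $L^p(\meas)$ independently of the representative, and that it is Lipschitz there.

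For well-definedness, I will use \eqref{eq:condition:stability:rho}. If $w=0$ $\meas$-a.e., then $\int\E{|w(\xxe)|^p}\meas(\dx)=0$, so $w(\xxe)=0$ a.s. for $\meas$-a.e. $x$. Consequently $\Phi(w_1)=\Phi(w_2)$ $\meas$-a.e. whenever $w_1=w_2$ $\meas$-a.e., and $\Phi$ is well defined on classes.

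For the Lipschitz bound, the estimate of Proposition \ref{pr:Lpestimate} only uses \eqref{eq:condition:stability:rho} and \eqref{eq:condition:stability:rho:bis} together with Hölder's inequality. Its proof therefore extends verbatim to any $w_1,w_2\in L^p(\meas)$, provided the expectations defining $\Phi(w_i)$ are finite $\meas$-a.e. To see this, bound $|f(x,y,z)|\le|f(x,0,0)|+K_{f,y}|y|+K_{f,z}|z|$ with $|f(x,0,0)|\le M_f(1+|x|^r)$. Then $\Phi(w)-\Phi(0)$ is controlled in $L^p(\meas)$ by $\kappa_p\|w\|_{L^p(\meas)}$, which shows $\Phi(w)\in L^p(\meas)$ because $\Phi(0)\in L^p(\meas)$ by assumption. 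If one insists on the polynomial-growth class, the alternative is to approximate $w\in L^p(\meas)$ by truncations $\lfloor w\rfloor_R$, which are bounded and hence of growth $r$, and pass to the limit via monotone convergence.

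With $\Phi$ a $\kappa_p$-contraction on $L^p(\meas)$, Banach's theorem gives a unique fixed point $v$ with $\int|v|^p\,\dd\meas<\infty$. The bound $\|v\|\le(1-\kappa_p)^{-1}\|\Phi(0)\|$ follows by the same triangle-inequality rearrangement as in the first case.
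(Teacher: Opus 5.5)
Your proposal is correct and matches the paper, which simply states that the corollary is a straightforward application of the Banach fixed-point theorem to the contraction estimate of Proposition~\ref{pr:Lpestimate}; your checks on completeness of the weighted space and your choice to work in all of $L^p(\meas)$ fill in what the paper leaves implicit. Two small corrections. First, $\rho$ is only assumed nondecreasing, not continuous, but your completeness argument needs only local boundedness, which monotonicity gives. Second, the truncation-plus-monotone-convergence step is not an optional alternative: it is exactly what lets you apply \eqref{eq:condition:stability:rho}, which is assumed only for growth-$r$ functions, to a general $w\in L^p(\meas)$. You need it both for well-definedness on equivalence classes and for the Lipschitz bound.
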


Next, in order to check in practice assumptions of Corollary \ref{cor:contraction} we must obtain some bounds on $c_{\infty,\eqref{eq:condition:normA:rho}}$, $\tilde{c}_{\infty,\eqref{eq:condition:normA:rho}}$, $c_{p,\eqref{eq:condition:stability:rho}}$, $\tilde{c}_{p,\eqref{eq:condition:stability:rho}}$ and $\tilde{c}_{p,\eqref{eq:condition:stability:rho:bis}}$. 
{This is the purpose of the following subsections. The bounds obtained can likely be improved on a case-by-case basis; our aim here is simply to show that contraction holds for certain parameter regimes, establishing that Corollary \ref{cor:contraction} is non-vacuous.}

\subsection{Sufficient assumptions to get $\kappa_\infty<1$}
\label{section:checking:BSDE}
{The aim of this subsection is to identify concrete sets of assumptions on the generator, the terminal condition and the SDE under which $\Phi$ is a contraction for $p=+\infty$, so that Corollary \ref{cor:contraction} applies. This contraction property will then be the key ingredient for designing a numerical scheme with convergence guarantees in the next section.}
These assumptions are often stronger than those requested in Proposition \ref{prop:representationBSDEinfinitehorizon} to get existence and uniqueness of a solution to BSDE \eqref{eq:BSDEinfinitehorizon}. In particular, they depend on parameters $a$, $\ta$, $\theta$, $\ttheta$ and they give some indications on how to choose optimally these parameters to ensure the contraction property of $\Phi$. 
We assume that  \eqref{as:generatorf} and \eqref{as:b:sigma} hold. Then, we recall and introduce following notations:
\begin{itemize}
 \item $b$ is $K_b$-Lipschitz and there exist two non-negative constants $M_b$ and $L_b$ such that 
 \begin{equation*}
  |b(x)| \leqslant M_b + L_b |x|, \quad \forall x \in \bR^d,
 \end{equation*}
\item $\sigma$ is $K_{\sigma}$-Lipschitz and
\begin{equation*}
 \sup_{x \in \bR^d} \norm{\sigma(x)} \leqslant M_{\sigma}, \quad \sup_{x \in \bR^d} \norm{\sigma^{-1}(x)} \leqslant M_{\sigma^{-1}}.
\end{equation*}
\end{itemize}

First, we establish the following elementary result:

\begin{Lemma}
\label{lem:ineq:boundmomentX} Assume  \eqref{as:generatorf} and \eqref{as:b:sigma}. 
For all $\varepsilon>0$ and $p \geqslant 2$, there exists $C_{\varepsilon}$ that only depends on $M_b,L_b,M_{\sigma},p$ and $\varepsilon$, such that for all $t,x$, we have
$$\Exp{|\Xx_t|^p} \leqslant (|x|^p+C_{\varepsilon}t)e^{(pL_b + \varepsilon)t}.$$
 \end{Lemma}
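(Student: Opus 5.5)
We plan to apply Itô's formula to $|\Xx_t|^p$ and close the estimate with Grönwall's lemma. Since $p\geqslant 2$, the map $x\mapsto |x|^p$ is $C^2$, with gradient $p|x|^{p-2}x$ and Hessian $p|x|^{p-2}\bI_d + p(p-2)|x|^{p-4}xx^\top$; the latter is bounded in norm by $p(p-1)|x|^{p-2}$. Itô's formula then gives
\begin{align*}
|\Xx_t|^p = |x|^p + \int_0^t \Big(p|\Xx_s|^{p-2}\langle \Xx_s, b(\Xx_s)\rangle + \tfrac12 \Tr\big(\sigma\sigma^\top(\Xx_s)\, D^2|\cdot|^p(\Xx_s)\big)\Big)\ds + M_t,
\end{align*}
where $M_t$ is a local martingale. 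To remove it, we localize with stopping times $\tau_n=\inf\{t: |\Xx_t|\geqslant n\}$. The moments of $\Xx$ are finite by Remark \ref{rem:boundestimatesX}, so we can pass to the limit by Fatou's lemma or dominated convergence. Alternatively, we can note directly that $M$ is a true martingale, since $\sigma$ is bounded and all moments of $\Xx$ are finite.

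Next we bound the drift term. Using $|b(x)|\leqslant M_b+L_b|x|$ gives
\[
p|x|^{p-2}\langle x,b(x)\rangle \leqslant pL_b|x|^p + pM_b|x|^{p-1}.
\]
Boundedness of $\sigma$ bounds the trace term: with $\Tr(\sigma\sigma^\top A)\leqslant d\,M_\sigma^2\norm{A}$, it is at most $C|x|^{p-2}$. For the dimension-free constant claimed, we instead use $\norm{\sigma}_F^2$, which is controlled by $M_\sigma^2$ up to dimension. If needed, we accept that $C_\varepsilon$ depends on $d$ as well, since the statement is loose on this point. The lower-order terms are then absorbed by Young's inequality. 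For any $\varepsilon>0$,
\[
pM_b|x|^{p-1}\leqslant \tfrac{\varepsilon}{2}|x|^p + C'_\varepsilon, \qquad C|x|^{p-2}\leqslant \tfrac{\varepsilon}{2}|x|^p + C''_\varepsilon.
\]
Setting $\phi(t)=\Exp{|\Xx_t|^p}$, we obtain
\[
\phi(t)\leqslant |x|^p + \int_0^t\big((pL_b+\varepsilon)\phi(s)+C_\varepsilon\big)\ds .
\]

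To conclude, we apply Grönwall's lemma in its linear-ODE comparison form. Writing $\lambda=pL_b+\varepsilon$, it gives
\[
\phi(t)\leqslant |x|^p e^{\lambda t} + C_\varepsilon\int_0^t e^{\lambda(t-s)}\ds \leqslant (|x|^p + C_\varepsilon t)e^{\lambda t},
\]
where the last step uses $e^{\lambda(t-s)}\leqslant e^{\lambda t}$. This is exactly the claimed bound.

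The only delicate point is the localization needed to justify taking expectations and differentiating $\phi$. We handle it with the stopping-time argument above, applying Grönwall's lemma to $\Exp{|X^x_{t\wedge\tau_n}|^p}$, or more cleanly to the integral inequality directly, and then letting $n\to\infty$ by Fatou's lemma.
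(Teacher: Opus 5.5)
Your proposal is correct and follows essentially the same route as the paper: It\^o's formula for $|X^x_t|^p$, the linear-growth bound on $b$ and the boundedness of $\sigma$ for the drift and trace terms, Young's inequality to absorb the lower-order powers into $\varepsilon|X^x_s|^p + C_\varepsilon$, and Gr\"onwall's lemma (your localization step is a detail the paper leaves implicit). Your caveat about $d$ is justified and not merely cautious: with $b=0$, $\sigma=\bI_d$, $x=0$ and $p=2$ one has $\Exp{|\Xx_t|^2}=dt$, so $C_\varepsilon$ must depend on $d$ unless $M_\sigma$ bounds the Frobenius norm, and that Frobenius reading is exactly what makes the paper's trace bound $\frac{p(p-1)}{2}M_\sigma^2|x|^{p-2}$ valid.
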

\begin{proof}
Applying It\^o formula to $|X_t^x|^p$, we get
\begin{align*}
 \Exp{|X_t^x|^p} &\leqslant |x|^p + \int_0^t p\Exp{|X_s^x|^{p-1}(M_b + L_b |X_s^x|)} \ds +\frac{p(p-1)}{2}\int_0^t M_{\sigma}^2 \Exp{|X_s^x|^{p-2}}\ds\\
 &\leqslant |x|^p + C_{\varepsilon} t + \int_0^t (pL_b + \varepsilon) \Exp{|X_s^x|^{p}} \ds
\end{align*}
and we just have to use Gr\"onwall lemma to conclude.
\end{proof}
 
\paragraph{When $f$ only depends on $y$: $f(x,y,z)=f(x,y)$.} 

We consider \eqref{prop:eq:u} without the component $\bar u$, i.e.
\begin{align}
 u(x)=\frac{1}{\theta }\E{  (f(\Xx_E,u(\Xx_E))+a u(\Xx_E))e^{-(a-\theta)E} }.
\label{prop:representationBSDEinfinitehorizon:eq:u:analysis}
\end{align}
Due to {the growth condition} \eqref{prop:representationBSDEinfinitehorizon:growth}, it is natural to take a weight function with a growth at least equal to $1+|x|^r$. Since this weight function will have an impact on the error study  of our numerical scheme - to be precise, the error coming from the truncation of the domain, see Remark \ref{rem:comments-num-error} - we take here ${\rho}(x)=\rho_{r'}(x)=1+|x|^{r'}$ with $r' \geqslant r$.  Let us remark that  \eqref{as:suprho} holds because of Remark \ref{rem:boundestimatesX}. Also, let us note that this choice implies $\normr{\Phi(0)}<+\infty$ as soon as $c_{\infty,\eqref{eq:condition:normA:rho}} <+\infty$.

\begin{Proposition}
\label{prop:contraction:BSDE:infinite:horizon} Assume  \eqref{as:generatorf} and \eqref{as:b:sigma}.
For all $\varepsilon>0$, $r'>0$ and $a>r'L_b+\varepsilon$, we have
 $$\kappa_\infty \leqslant \sqrt{K_{f,y}^2-2a\mu+a^2} \max\left\{ \frac{1}{a}+\frac{C_{\varepsilon}}{(a-(r'L_b+\varepsilon))^{1+\frac{r'}{2} \wedge 1}} , \frac{1}{a-(r'L_b+\varepsilon)} \right\}$$
where $\kappa_{\infty}$ is defined in Proposition \ref{ch4:prop:genY} and with $C_{\varepsilon}$ that only depends on $M_b,L_b,M_{\sigma},p$ and $\varepsilon$. If $r'=0$, we also have
$$\kappa_\infty \leqslant \frac{\sqrt{K_{f,y}^2-2a\mu+a^2}}{a}.$$
Finally, $\Phi$ is a contraction as soon as $\mu>r'L_b$ and $a$ is large enough.
\end{Proposition}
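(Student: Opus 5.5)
By Proposition \ref{ch4:prop:genY}, $\kappa_\infty = c_{\infty,\eqref{eq:condition:normA:rho}}\sqrt{K_{f,y}^2-2\mu a+a^2}$. It therefore suffices to bound
\[
c_{\infty,\eqref{eq:condition:normA:rho}}=\sup_x \frac{1}{\rho_{r'}(x)}\E{\frac{e^{-(a-\theta)E}}{\theta}\bigl(1+|\Xx_E|^{r'}\bigr)}.
\]
Since $E\sim\cE(\theta)$ is independent of $W$, the time randomization integrates out exactly:
\[
\E{\frac{e^{-(a-\theta)E}}{\theta}g(E)} = \int_0^\infty e^{-as} g(s)\ds
\]
for every nonnegative $g$. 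Hence
\[
c_{\infty,\eqref{eq:condition:normA:rho}}=\sup_x \frac{1}{1+|x|^{r'}}\left(\frac1a+\int_0^\infty e^{-as}\E{|\Xx_s|^{r'}}\ds\right),
\]
and the whole task is to plug in moment bounds on $\Xx_s$. The case $r'=0$ is immediate: $\rho\equiv 2$ (or constant), the ratio $\rho(\Xx_E)/\rho(x)=1$, and $c_\infty = 1/a$.

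For $r'\geqslant 2$, I apply Lemma \ref{lem:ineq:boundmomentX} with $p=r'$. This gives
\[
\int_0^\infty e^{-as}\bigl(|x|^{r'}+C_\varepsilon s\bigr)e^{(r'L_b+\varepsilon)s}\ds=\frac{|x|^{r'}}{a-(r'L_b+\varepsilon)}+\frac{C_\varepsilon}{(a-(r'L_b+\varepsilon))^2}.
\]
I then split the numerator as
\[
\frac1a+\frac{C_\varepsilon}{(a-\cdot)^2} \quad\text{(the part multiplying $1$)}, \qquad \frac{|x|^{r'}}{a-\cdot} \quad\text{(the part multiplying $|x|^{r'}$)}.
\]
Using $(\alpha+\beta|x|^{r'})/(1+|x|^{r'})\leqslant\max(\alpha,\beta)$, this yields exactly the stated max with exponent $1+(r'/2\wedge1)=2$.

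For $0<r'<2$, Lemma \ref{lem:ineq:boundmomentX} does not apply directly since it needs $p\geqslant 2$. I would use Jensen with $p=2$: $\E{|\Xx_s|^{r'}}\leqslant \E{|\Xx_s|^2}^{r'/2}\leqslant (|x|^2+C_\varepsilon s)^{r'/2}e^{(r'L_b+\varepsilon r'/2)s}$. I then apply subadditivity of $t\mapsto t^{r'/2}$, namely $(|x|^2+C s)^{r'/2}\leqslant|x|^{r'}+C^{r'/2}s^{r'/2}$, after renaming $\varepsilon$. The Laplace integral of $s^{r'/2}e^{-(a-\cdot)s}$ is $\Gamma(1+r'/2)/(a-\cdot)^{1+r'/2}$, which produces the exponent $1+r'/2$. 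Absorbing constants into $C_\varepsilon$ gives the claimed bound.

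Finally, for the contraction claim: when $a\to\infty$, the bound behaves like $\frac{\sqrt{K_{f,y}^2-2a\mu+a^2}}{a-r'L_b-\varepsilon}$. Writing $\sqrt{a^2-2a\mu+K^2}=a-\mu+O(1/a)$, this ratio is $1-\frac{\mu-r'L_b-\varepsilon}{a}+O(a^{-2})$. The other branch is $\frac{a-\mu+O(1/a)}{a}(1+O(a^{-1-(r'/2\wedge1)}))=1-\mu/a+o(1/a)$. If $\mu>r'L_b$, I choose $\varepsilon<\mu-r'L_b$; both branches are then $<1$ for $a$ large, and so $\kappa_\infty<1$.

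The main obstacle is this last asymptotic step, because the bound tends to $1$ and one must track $1/a$ corrections carefully. I would also need to confirm that the $C_\varepsilon$ term of order $a^{-1-r'/2}$ is $o(1/a)$ for the first branch, which it is since $r'>0$. The case $r'=0$ with $\mu>0$ gives $\sqrt{a^2-2a\mu+K^2}/a<1$ directly, for $a>K^2/(2\mu)$.
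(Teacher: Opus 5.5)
Your reduction to $c_{\infty}$, the time-randomization identity, and the bounds for $r'\geqslant 2$ and $r'=0$ coincide with the paper's proof. For $0<r'<2$, you use subadditivity of $t\mapsto t^{r'/2}$ and a Gamma integral; this is a slightly more direct route than the paper's second Jensen step and gives the same outcome.

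The gap is in the final asymptotic step. Put $c:=r'L_b+\varepsilon$ and $\gamma:=\frac{r'}{2}\wedge 1$. Factoring $1/a$ out of the first branch gives $\frac1a+\frac{C_\varepsilon}{(a-c)^{1+\gamma}}=\frac1a\bigl(1+C_\varepsilon a^{-\gamma}(1+O(a^{-1}))\bigr)$, not $\frac1a\bigl(1+O(a^{-1-\gamma})\bigr)$. Hence
\[
\sqrt{K_{f,y}^2-2a\mu+a^2}\,\Bigl(\frac1a+\frac{C_\varepsilon}{(a-c)^{1+\gamma}}\Bigr)=1-\frac{\mu}{a}+C_\varepsilon a^{-\gamma}\bigl(1+O(a^{-1})\bigr)+O(a^{-2}).
\]
Your check that the $C_\varepsilon$-term is $o(1/a)$ is the wrong comparison. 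The branch is multiplied by a factor of order $a$, so that term would have to be $o(a^{-2})$ to be dominated by the margin $\mu/a$. Consequences:
\begin{itemize}
\item For $0<r'<2$, this branch is then the larger one, contrary to your ``behaves like'' claim, and it gives a bound strictly above $1$ for all large $a$.
\item For $r'\geqslant 2$, it equals $1+\frac{C_\varepsilon-\mu}{a}+O(a^{-2})$. This is below $1$ only if $C_\varepsilon<\mu$, which $\mu>r'L_b$ does not provide.
\end{itemize}

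This cannot be patched, because the last sentence of the statement is false in general for $r'>0$. Take $d=1$, $X^x=x+W$ (so $L_b=0$), $f(x,y)=-\mu y$ (so $K_{f,y}=\mu$) and $a>\mu$. Then $\Phi(w)(x)=(a-\mu)\int_0^\infty e^{-as}\bE[w(x+W_s)]\,\ds$ is linear and positive. Its Lipschitz constant in $\normr{\cdot}$ with $\rho=\rho_{r'}$ is therefore exactly $\kappa_\infty=(a-\mu)c_\infty$; test with $w=\rho$ to see equality.
\begin{itemize}
\item For $r'=2$: $c_\infty=\sup_x\frac{(1+x^2)/a+1/a^2}{1+x^2}=\frac1a+\frac1{a^2}$. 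Hence $\kappa_\infty=1+\frac{1-\mu}{a}-\frac{\mu}{a^2}>1$ for all $a>\mu/(1-\mu)$ when $\mu<1$.
\item For $r'=1$: evaluating at $x=0$ gives $\kappa_\infty\geqslant 1-\frac{\mu}{a}+\frac{a-\mu}{\sqrt2\,a^{3/2}}$, which exceeds $1$ for all large $a$, whatever $\mu>0$.
\end{itemize}
The paper's own proof has the same hole. It bounds $\kappa_\infty$ by $\frac{a-\mu+\varepsilon}{a-(r'L_b+\varepsilon)}$ and silently discards the first branch of the max.

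What is actually provable:
\begin{itemize}
\item the case $r'=0$, where your argument is correct;
\item the case $r'\geqslant 2$ under the extra condition $C_\varepsilon<\mu$;
\item for $r'\geqslant 2$, contraction for the equivalent weight $\lambda+|x|^{r'}$ with $\lambda>C_\varepsilon/\mu$, since this divides the $C_\varepsilon$-term by $\lambda$.
\end{itemize}
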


\begin{proof}
Using the Proposition \ref{ch4:prop:genY}, we just need a bound on $c_{\infty,\eqref{eq:condition:normA:rho}}$. If $r'\geqslant 2$, then we apply Lemma \ref{lem:ineq:boundmomentX} to get, for $a>r'L_b+\varepsilon$,
\begin{align*}
 c_{\infty,\eqref{eq:condition:normA:rho}} &\leqslant \sup_{x \in \bR^d} \frac{\int_0^{+\infty} e^{-as} (1+\Exp{|\Xx_s|^{r'}})\ds}{1+|x|^{r'}}\\
 &\leqslant \sup_{x \in \bR^d} \frac{\int_0^{+\infty} e^{-as} (1+(C_{\varepsilon}s+|x|^{r'})e^{(r'L_b+\varepsilon)s})\ds}{1+|x|^{r'}}\\
 &\leqslant \sup_{x \in \bR^d} \frac{\frac{1}{a}+\frac{C_{\varepsilon}}{(a-(r'L_b+\varepsilon))^2} + \frac{|x|^{r'}}{a-(r'L_b+\varepsilon)}}{1+|x|^{r'}}\\
 &\leqslant \max\left\{ \frac{1}{a}+\frac{C_{\varepsilon}}{(a-(r'L_b+\varepsilon))^2} , \frac{1}{a-(r'L_b+\varepsilon)} \right\}.
\end{align*}
When $0 < r' <2$, we just have to apply Jensen's inequality and Lemma \ref{lem:ineq:boundmomentX} (with a different $\varepsilon$) to get
\begin{align*}
 c_{\infty,\eqref{eq:condition:normA:rho}} &\leqslant \sup_{x \in \bR^d} \frac{\int_0^{+\infty} e^{-as} (1+\Exp{|\Xx_s|^2}^{r'/2})\ds}{1+|x|^{r'}}\\
 &\leqslant \sup_{x \in \bR^d} \frac{\frac{1}{a}+\int_0^{+\infty}  (C_{\varepsilon}s+|x|^2)^{r'/2}e^{-(a-(r'L_b+\varepsilon))s}\ds}{1+|x|^{r'}}\\
 &\leqslant \sup_{x \in \bR^d} \frac{\frac{1}{a}+\frac{1}{a-(r'L_b+\varepsilon)}\left(\int_0^{+\infty}  (a-(r'L_b+\varepsilon))(C_{\varepsilon}s+|x|^2)e^{-(a-(r'L_b+\varepsilon))s})\ds\right)^{r'/2} }{1+|x|^{r'}}
\end{align*}
and we conclude as previously. When $r'=0$ the result is obvious.

Finally, when $\mu>r'L_b$, we set $0<\varepsilon < (\mu-r'L_b)/2$.
For $a$ large enough 
$$\sqrt{K_{f,y}^2-2a\mu+a^2} = a - \mu + o(1) \leqslant a-\mu+\varepsilon,$$
and 
$$\kappa_\infty \leqslant \frac{a-\mu+\varepsilon}{a-(r'L_b+\varepsilon)}$$ which is smaller than $1$.
\end{proof}


\begin{Remark}

\begin{itemize}
    \item As pointed out in Remark \ref{rem:boundestimatesX}, Assumption $\mu > rL_b$ is the condition required in Proposition \ref{prop:existence:uniqueness:BSDE} to get existence and uniqueness of a solution to the BSDE. If $\mu > rL_b$, then there exists $r'>r$ small enough such that $\mu > r'L_b$. Moreover, if $b$ has a sublinear growth, then we can take $L_b$ as close to $0$ as we want and then there is no upper constraint on $r' \geqslant r$. 
    \item In this special case, we do not need the smoothness of $b$, $\sigma$ and the invertibility of $\sigma$ since there is no Malliavin weight in \eqref{prop:eq:u}.
\end{itemize} 
\end{Remark}

\paragraph{When $f$ depends on $y$ and $z$.}
Due to \eqref{prop:representationBSDEinfinitehorizon:growth}, it is still natural to take the weight function ${\rho}(x)=\rho_{r'}(x)=1+|x|^{r'}$, for $x \in \bR^d$ and $r' \geqslant r$. 
\begin{Proposition}
\label{prop:condition:contraction:infinitehorizon} Assume  \eqref{as:generatorf} and \eqref{as:b:sigma}.
Let us assume that we have following growth estimates on $X$: there exist some constants $c_{1,\eqref{ass:momentXandnablaX}}$, $c_{2,\eqref{ass:momentXandnablaX}}$, $c_{3,\eqref{ass:momentXandnablaX}}$, $c_{4,\eqref{ass:momentXandnablaX}}$, $c_{5,\eqref{ass:momentXandnablaX}} \geqslant 0$ such that, for all $t\geqslant 0$,
\begin{equation}
\label{ass:momentXandnablaX}
\Exp{|X_t^x|^{2r'}}^{1/2} \leqslant (c_{1,\eqref{ass:momentXandnablaX}}+c_{2,\eqref{ass:momentXandnablaX}}|x|^{r'})e^{c_{3,\eqref{ass:momentXandnablaX}}t}, \quad \Exp{\norm{\nabla X_t^x}^{2}}^{1/2} \leqslant c_{4,\eqref{ass:momentXandnablaX}}e^{c_{5,\eqref{ass:momentXandnablaX}}t}.  
\end{equation}
Then, for $a>c_{3,\eqref{ass:momentXandnablaX}}$ and $\ta > c_{3,\eqref{ass:momentXandnablaX}}+c_{5,\eqref{ass:momentXandnablaX}}$ we have following upper-bounds:
$$c_{\infty,\eqref{eq:condition:normA:rho}} \leqslant \max \left\{\frac{1}{a} + \frac{c_{1,\eqref{ass:momentXandnablaX}}}{a-c_{3,\eqref{ass:momentXandnablaX}}} ,  \frac{c_{2,\eqref{ass:momentXandnablaX}}}{a-c_{3,\eqref{ass:momentXandnablaX}}} \right\}$$
and
\begin{align*}
  &\tilde{c}_{\infty,\eqref{eq:condition:normA:rho}}\\ \leq& M_{\sigma}M_{\sigma^{-1}} c_{4,\eqref{ass:momentXandnablaX}}\sqrt{\pi} \max \left\{ \frac{1}{\sqrt{\ta-c_{5,\eqref{ass:momentXandnablaX}}}} + \frac{c_{1,\eqref{ass:momentXandnablaX}}}{\sqrt{\ta - c_{5,\eqref{ass:momentXandnablaX}} - c_{3,\eqref{ass:momentXandnablaX}}}} , \frac{c_{2,\eqref{ass:momentXandnablaX}}}{\sqrt{\ta-c_{5,\eqref{ass:momentXandnablaX}}-c_{3,\eqref{ass:momentXandnablaX}}}}  \right\}.
  \end{align*}  
\end{Proposition}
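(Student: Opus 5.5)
For both constants I would convert the expectation over the randomized time into a time integral against the explicit density, and then bound the integrand pointwise in $s$ using the moment estimates \eqref{ass:momentXandnablaX}.

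\textbf{First component.} Since $E\sim\cE(\theta)$ is independent of $W$,
$$\E{\tfrac{e^{-(a-\theta)E}}{\theta}\rho(\Xx_E)}=\int_0^\infty e^{-as}\bigl(1+\E{|\Xx_s|^{r'}}\bigr)\ds .$$
By Jensen, $\E{|\Xx_s|^{r'}}\leqslant \E{|\Xx_s|^{2r'}}^{1/2}\leqslant (c_1+c_2|x|^{r'})e^{c_3 s}$. Integrating gives
$$\frac1a+\frac{c_1+c_2|x|^{r'}}{a-c_3},$$
which is finite because $a>c_3$. Dividing by $1+|x|^{r'}$ and using $\frac{\alpha+\beta t}{1+t}\leqslant \max(\alpha,\beta)$ for $t\geqslant 0$, taken with $t=|x|^{r'}$, yields the stated bound on $c_{\infty,\eqref{eq:condition:normA:rho}}$.

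\textbf{Second component.} The density of $\Gamma(1/2,\ttheta)$ is $\sqrt{\ttheta}\,s^{-1/2}e^{-\ttheta s}/\sqrt{\pi}$. The factors $\sqrt{\pi}/\sqrt{\ttheta}$ and $\sqrt{s}$ cancel against it, so
$$\tilde c_{\infty,\eqref{eq:condition:normA:rho}}=\sup_x\int_0^\infty e^{-\ta s}\,\frac{\E{\norm{\Ux_s}\rho(\Xx_s)}}{\rho(x)}\ds .$$
The key step is a Cauchy--Schwarz split:
$$\E{\norm{\Ux_s}\rho(\Xx_s)}\leqslant \E{\norm{\Ux_s}}+\E{\norm{\Ux_s}^2}^{1/2}\E{|\Xx_s|^{2r'}}^{1/2}.$$
For the Malliavin weight, the definition \eqref{eq:malliavin:weight}, the bound $\norm{\sigma(x)}\leqslant M_\sigma$, and the Itô isometry give
$$\E{\norm{\Ux_s}^2}\leqslant \frac{M_\sigma^2M_{\sigma^{-1}}^2}{s^2}\int_0^s \E{\norm{\nabla_x \Xx_u}^2}\du\leqslant \frac{M_\sigma^2M_{\sigma^{-1}}^2c_4^2}{s^2}\int_0^s e^{2c_5u}\du\leqslant \frac{M_\sigma^2M_{\sigma^{-1}}^2c_4^2e^{2c_5s}}{s}.$$
(Any constant from row versus matrix norm conventions is absorbed.) Hence $\E{\norm{\Ux_s}^2}^{1/2}\leqslant M_\sigma M_{\sigma^{-1}}c_4\, s^{-1/2}e^{c_5 s}$, and the same bound holds for $\E{\norm{\Ux_s}}$. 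Inserting these, the integral is at most
$$M_\sigma M_{\sigma^{-1}}c_4\int_0^\infty s^{-1/2}\Bigl(e^{-(\ta-c_5)s}+(c_1+c_2|x|^{r'})e^{-(\ta-c_5-c_3)s}\Bigr)\ds .$$
Using $\int_0^\infty s^{-1/2}e^{-\lambda s}\ds=\sqrt{\pi/\lambda}$, this equals $M_\sigma M_{\sigma^{-1}}c_4\sqrt\pi$ times
$$\frac{1}{\sqrt{\ta-c_5}}+\frac{c_1+c_2|x|^{r'}}{\sqrt{\ta-c_5-c_3}},$$
which is finite because $\ta>c_3+c_5$. The same max-over-ratio argument as in the first component then concludes.

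\textbf{Main obstacle.} The only delicate point is the second-moment bound on $\Ux_s$ through the Itô isometry, together with checking that the $s^{-1/2}$ singularity is exactly cancelled by the gamma weighting, so that the integral is finite. Everything else is bookkeeping.
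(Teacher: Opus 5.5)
Your proposal is correct and follows the paper's proof essentially step for step. It uses the same rewriting of both expectations as time integrals against the exponential and $\Gamma(1/2,\ttheta)$ densities, the same Jensen/Cauchy--Schwarz split, the same It\^o-isometry bound $\E{\norm{\Ux_s}^2}^{1/2}\leqslant M_\sigma M_{\sigma^{-1}}c_4\, s^{-1/2}e^{c_5 s}$, the same evaluation $\int_0^\infty s^{-1/2}e^{-\lambda s}\ds=\sqrt{\pi/\lambda}$, and the same final step $\sup_{t\geqslant 0}\frac{\alpha+\beta t}{1+t}\leqslant\max\{\alpha,\beta\}$. Your one hand-wave is the claim that norm-convention constants are ``absorbed'': the bound is explicit, and the It\^o isometry naturally produces a Frobenius norm, so a dimensional factor cannot simply be absorbed. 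The paper makes this same step silently, so it does not separate your argument from theirs.
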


\begin{proof}
Let us start to study  $c_{\infty,\eqref{eq:condition:normA:rho}}$. We have
\begin{align*}
  \E{\frac{e^{-(a-\theta)E}}{\theta}(1+|X_E^x|^{r'})} =& \int_0^{+\infty} e^{-at} \left(1+\E{|\Xx_t|^{r'}}\right)\dt \leqslant \int_0^{+\infty} e^{-at} \left(1+\E{|\Xx_t|^{2r'}}^{1/2}\right)\dt\\ 
  \leqslant & \frac{1}{a} + \frac{c_{1,\eqref{ass:momentXandnablaX}}}{a-c_{3,\eqref{ass:momentXandnablaX}}} + \frac{c_{2,\eqref{ass:momentXandnablaX}}|x|^{r'}}{a-c_{3,\eqref{ass:momentXandnablaX}}}.
\end{align*}
Thus, we easily obtain that
$$c_{\infty,\eqref{eq:condition:normA:rho}} \leqslant \max \left\{\frac{1}{a} + \frac{c_{1,\eqref{ass:momentXandnablaX}}}{a-c_{3,\eqref{ass:momentXandnablaX}}} ,  \frac{c_{2,\eqref{ass:momentXandnablaX}}}{a-c_{3,\eqref{ass:momentXandnablaX}}} \right\}.$$
Now, we study $\tilde{c}_{\infty,\eqref{eq:condition:normA:rho}}$. We have
\begin{align*}
  &\E{\sqrt{\frac{\pi}{\ttheta}} \sqrt{\tE}e^{-(\ta-\ttheta)\tE}|U_\tE^x|(1+|X_\tE^x|^{r'})}\\ 
  =& \int_0^{+\infty} e^{-\ta t}\E{|\Ux_t|(1+|\Xx_t|^{r'})} \dt\\
  \leq& \int_0^{+\infty} e^{-\ta t}\E{|\Ux_t|^2}^{1/2}(1+\E{|\Xx_t|^{2r'}}^{1/2}) \dt\\
  \leqslant & \int_0^{+\infty} e^{-\ta t} \frac{M_{\sigma}M_{\sigma^{-1}}}{\sqrt{t}} \E{ \frac{1}{t} \int_0^t \norm{\nabla_x X_s}^2 \ds}^{1/2} \left(1+ (c_{1,\eqref{ass:momentXandnablaX}}+c_{2,\eqref{ass:momentXandnablaX}}|x|^{r'})e^{c_{3,\eqref{ass:momentXandnablaX}}t}\right) \dt\\
  \leqslant & M_{\sigma}M_{\sigma^{-1}} c_{4,\eqref{ass:momentXandnablaX}}\int_0^{+\infty} e^{-\ta t} \frac{e^{c_{5,\eqref{ass:momentXandnablaX}} t}}{\sqrt{t}}  \left(1+ (c_{1,\eqref{ass:momentXandnablaX}}+c_{2,\eqref{ass:momentXandnablaX}}|x|^{r'})e^{c_{3,\eqref{ass:momentXandnablaX}}t}\right) \dt\\
  \leqslant & M_{\sigma}M_{\sigma^{-1}} c_{4,\eqref{ass:momentXandnablaX}}\sqrt{\pi}\left( \frac{1}{\sqrt{\ta-c_{5,\eqref{ass:momentXandnablaX}}}} + \frac{c_{1,\eqref{ass:momentXandnablaX}}}{\sqrt{\ta - c_{5,\eqref{ass:momentXandnablaX}} - c_{3,\eqref{ass:momentXandnablaX}}}} + \frac{c_{2,\eqref{ass:momentXandnablaX}}|x|^{r'}}{\sqrt{\ta-c_{5,\eqref{ass:momentXandnablaX}}-c_{3,\eqref{ass:momentXandnablaX}}}} \right)
\end{align*}
which gives us the announced result.  
\end{proof}

\begin{Remark}\label{ch4:rem:contraction}
  Let us set $a=\ta=K_{f,y}$ and denote $\delta = K_{f,y}-\mu\geqslant 0$. If $K_{f,y} > c_{3,\eqref{ass:momentXandnablaX}}+c_{5,\eqref{ass:momentXandnablaX}}$, then
  $$\kappa_{\infty} \leqslant \sqrt{c_{\infty,\eqref{eq:condition:normA:rho}}^2  + \tilde{c}_{\infty,\eqref{eq:condition:normA:rho}}^2}\left(\sqrt{2\delta K_{f,y}}+K_{f,z}\right)<1$$
  as soon as $\delta$ and $K_{f,z}$ are small enough, i.e. the $z$ dependence of $f$ is ``small'' and the $y$ dependence is nearly linear (with coefficient $-\mu$).
\end{Remark}

\subsection{Sufficient assumptions to get $\kappa_p<1$}
\label{section:checking:BSDE:bis}

\begin{Proposition}
{Let $p\geqslant 2$.}
  Assume  \eqref{as:generatorf} and \eqref{as:b:sigma}. Let us assume the growth assumption on $(\nabla X_t^x)_{t \geqslant 0}$ given by the right hand side in \eqref{ass:momentXandnablaX}.
  If $\ta p-\ttheta-c_{5,\eqref{ass:momentXandnablaX}}p>0$, then we have
  $$\tilde{c}_{p,\eqref{ass:momentXandnablaX}} \leqslant \frac{(\sqrt{\pi}M_{\sigma}M_{\sigma^{-1}}c_{4,\eqref{ass:momentXandnablaX}})^p (p-1)^{\frac{p-1}{2}}}{{\sqrt{\ttheta}}(\ta p-\ttheta-c_{5,\eqref{ass:momentXandnablaX}}p)^{\frac{p-1}{2}}} .$$
\end{Proposition}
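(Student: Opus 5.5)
The statement is a bound on the constant $\tilde c_{p,\eqref{eq:condition:stability:rho:bis}}$ of \eqref{eq:condition:stability:rho:bis}; the subscript in the display is presumably a typo. I plan to compute this constant directly. Set $q=\frac{p}{p-1}\in(1,2]$, which uses $p\geqslant 2$. Since $\tE$ is independent of $W$ and has density $\frac{\sqrt{\ttheta}}{\sqrt{\pi}}t^{-1/2}e^{-\ttheta t}$ on $(0,+\infty)$ (recall $\Gamma(1/2)=\sqrt{\pi}$), I will write
\begin{align*}
\Exp{\left|\sqrt{\tfrac{\pi}{\ttheta}}U_\tE^x \sqrt{\tE}e^{-(\ta-\ttheta)\tE}\right|^{q}}
=\Big(\frac{\pi}{\ttheta}\Big)^{q/2}\frac{\sqrt{\ttheta}}{\sqrt{\pi}}\int_0^{+\infty} t^{\frac{q-1}{2}}e^{-(q\ta-(q-1)\ttheta)t}\,\Exp{|\Ux_t|^q}\dt .
\end{align*}

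The key step is a moment bound on the Malliavin weight. Since $q\leqslant 2$, Jensen gives $\Exp{|\Ux_t|^q}\leqslant \Exp{|\Ux_t|^2}^{q/2}$. From the definition \eqref{eq:malliavin:weight}, the Itô isometry and the bounds $\norm{\sigma(x)}\leqslant M_\sigma$, $\norm{\sigma^{-1}}\leqslant M_{\sigma^{-1}}$, I expect
\begin{align*}
\Exp{|\Ux_t|^2}\leqslant \frac{M_\sigma^2M_{\sigma^{-1}}^2}{t^2}\int_0^t\Exp{\norm{\nabla_x\Xx_s}^2}\ds\leqslant \frac{M_\sigma^2M_{\sigma^{-1}}^2c_{4,\eqref{ass:momentXandnablaX}}^2e^{2c_{5,\eqref{ass:momentXandnablaX}}t}}{t}.
\end{align*}
This is the same computation as in the proof of Proposition \ref{prop:condition:contraction:infinitehorizon}, and it uses the right-hand estimate in \eqref{ass:momentXandnablaX} together with $e^{2c_5 s}\leqslant e^{2c_5 t}$ for $s\leqslant t$. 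The mildly delicate point is to handle the matrix/row-vector norms correctly when applying the isometry. Consequently $\Exp{|\Ux_t|^q}\leqslant (M_\sigma M_{\sigma^{-1}}c_4)^q t^{-q/2}e^{qc_5t}$, with the bound uniform in $x$.

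Substituting, the factor $t^{-q/2}$ cancels against $t^{q/2}$, and the integrand becomes a constant times $t^{-1/2}e^{-\lambda t}$ with
\[
\lambda=q\ta-(q-1)\ttheta-qc_5=\frac{p\ta-\ttheta-pc_5}{p-1},
\]
which is positive exactly under the hypothesis $\ta p-\ttheta-c_5p>0$. Using $\int_0^\infty t^{-1/2}e^{-\lambda t}\dt=\sqrt{\pi/\lambda}$, the expectation is bounded by $\pi^{q/2}\ttheta^{(1-q)/2}(M_\sigma M_{\sigma^{-1}}c_4)^q(p-1)^{1/2}(p\ta-\ttheta-pc_5)^{-1/2}$.

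Finally, take the supremum over $x$ and raise to the power $p-1$. Since $q(p-1)=p$ and $(1-q)(p-1)/2=-1/2$, this yields exactly $(\sqrt{\pi}M_\sigma M_{\sigma^{-1}}c_4)^p(p-1)^{\frac{p-1}{2}}\ttheta^{-1/2}(\ta p-\ttheta-c_5p)^{-\frac{p-1}{2}}$. The main obstacle is only the weight estimate above; the rest is bookkeeping of exponents.
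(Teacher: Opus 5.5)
Your proposal is correct and follows the paper's proof essentially step by step. Both disintegrate against the $\Gamma(1/2,\ttheta)$ density and apply Jensen with $q=\frac{p}{p-1}\leqslant 2$ to reduce to the second moment of $\Ux_t$. Both then bound that moment by $(M_\sigma M_{\sigma^{-1}}c_{4,\eqref{ass:momentXandnablaX}})^2e^{2c_{5,\eqref{ass:momentXandnablaX}}t}/t$ via the It\^o isometry, evaluate the $t^{-1/2}e^{-\lambda t}$ integral, and raise the result to the power $p-1$. You are also right that the constant being bounded is $\tilde{c}_{p,\eqref{eq:condition:stability:rho:bis}}$; the label in the displayed statement is a typo.
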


\begin{proof}
We have 
\begin{align*}
  \Exp{\left|\sqrt{\frac{\pi}{\ttheta}}U_\tE^x \sqrt{\tE}e^{-(\ta-\ttheta)\tE}\right|^{\frac{p}{p-1}}}= & \int_0^{+\infty} \E{\left|\sqrt{\frac{\pi}{\ttheta}}U_{t}^x \sqrt{t}e^{-(\ta-\ttheta)t}\right|^{\frac{p}{p-1}}} \sqrt{\frac{\ttheta}{\pi t}}e^{-\ttheta t}\dt\\
  \leqslant &  \left(\frac{\pi}{\ttheta}\right)^{\frac{1}{2(p-1)}}\int_0^{+\infty} \E{|\Ux_t|^{\frac{p}{p-1}}} t^{\frac{1}{2(p-1)}}e^{-\frac{\ta p}{p-1}t+\frac{\ttheta}{p-1}t} \dt.
\end{align*}
Since $p\geqslant 2$, then $\frac{p}{p-1}\leqslant 2$ and Jensen inequality gives us, for all $t>0$,
\begin{align*}
  \E{|\Ux_t|^{\frac{p}{p-1}}} \leq&  \E{|\Ux_t|^{2}}^{\frac{p}{2(p-1)}} \leq\frac{(M_{\sigma}M_{\sigma^{-1}})^{\frac{p}{p-1}}}{t^{\frac{p}{2(p-1)}}}\left(\frac{1}{t}\int_0^t \E{\norm{\nabla X_s^x}^2}\ds\right)^{\frac{p}{2(p-1)}}\\
  \leq& \frac{(M_{\sigma}M_{\sigma^{-1}})^{\frac{p}{p-1}}}{t^{\frac{p}{2(p-1)}}}\left(c_{4,\eqref{ass:momentXandnablaX}}e^{c_{5,\eqref{ass:momentXandnablaX}}t}\right)^{\frac{p}{p-1}}.
\end{align*}
It follows that
\begin{align*}
  \Exp{\left|\sqrt{\frac{\pi}{\ttheta}}U_\tE^x \sqrt{\tE}e^{-(\ta-\ttheta)\tE}\right|^{\frac{p}{p-1}}}
   \leq& \left(\frac{\pi (M_{\sigma}M_{\sigma^{-1}}c_{4,\eqref{ass:momentXandnablaX}})^{2p}}{\ttheta}\right)^{\frac{1}{2(p-1)}}\int_0^{+\infty}  \frac{1}{\sqrt{t}}e^{-\frac{\ta p}{p-1}t+\frac{\ttheta}{p-1}t+\frac{c_{5,\eqref{ass:momentXandnablaX}}p}{p-1}t} \dt\\
   =&\left(\frac{\pi (M_{\sigma}M_{\sigma^{-1}}c_{4,\eqref{ass:momentXandnablaX}})^{2p}}{\ttheta}\right)^{\frac{1}{2(p-1)}} \frac{\sqrt{\pi(p-1)}}{\sqrt{\ta p-\ttheta-c_{5,\eqref{ass:momentXandnablaX}}p}}.
\end{align*}
Finally, we obtain
\begin{equation*}
  \sup_{x \in \bR^d}\Exp{\left|\sqrt{\frac{\pi}{\ttheta}}U_\tE^x \sqrt{\tE}e^{-(\ta-\ttheta)\tE}\right|^{\frac{p}{p-1}}}^{p-1} \leqslant 
\frac{(\sqrt{\pi}M_{\sigma}M_{\sigma^{-1}}c_{4,\eqref{ass:momentXandnablaX}})^p (p-1)^{\frac{p-1}{2}}}{{\sqrt{\ttheta}}(\ta p-\ttheta-c_{5,\eqref{ass:momentXandnablaX}}p)^{\frac{p-1}{2}}}
  \end{equation*}
  which gives us the announced result.
\end{proof}
\begin{Remark}
  If $p<2$, it is also possible to get an upper-bound for $\tilde{c}_{p,\eqref{eq:condition:stability:rho:bis}}$ by using same computations if we have a deterministic upper-bound for $\nabla X_t^x$ which is the case when the SDE has an additive noise.
\end{Remark}

It remains to prove the existence of $c_{p,\eqref{eq:condition:stability:rho}}$ and $\tilde{c}_{p,\eqref{eq:condition:stability:rho}}$.
The following proposition treat the special Brownian case.
\begin{Proposition}
  Let us assume that $X^x=x+W$ and let us consider $\meas(dx)=\frac{c}{(1+|x|)^{pr+d+1}}\dx$ where $c$ is such that $\meas$ is a density measure on $\bR^d$.
  Then \eqref{eq:condition:stability:rho} is satisfied for
  \begin{align*}
  c_{p,\eqref{eq:condition:stability:rho}}^p &:= \int_0^{+\infty} (t\vee 1)^{\frac{pr+d+1}{2}} \theta e^{-\theta t} \dt \int_{\bR^{d}} \left( 1+|y| \right)^{pr+d+1} \frac{1}{(2\pi)^{d/2}}e^{-\frac{|y|^2}{2}}  \dy<+\infty,\\
  \tilde{c}_{p,\eqref{eq:condition:stability:rho}}^p &:= \int_0^{+\infty} \frac{(t\vee 1)^{\frac{pr+d+1}{2}}}{\sqrt{t}} \frac{\sqrt{\ttheta}}{\sqrt{\pi}} e^{-\ttheta t} \dt \int_{\bR^{d}} \left( 1+|y| \right)^{pr+d+1} \frac{1}{(2\pi)^{d/2}}e^{-\frac{|y|^2}{2}}  \dy<+\infty.
  \end{align*}
\end{Proposition}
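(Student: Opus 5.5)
We need to show, for $X^x_t=x+W_t$ and $\meas(\dx)=c(1+|x|)^{-q}\dx$ with $q:=pr+d+1$, that
$$\int_{\bR^d}\E{|w(x+W_E)|^p}\meas(\dx)\leqslant c_{p,\eqref{eq:condition:stability:rho}}^p\int_{\bR^d}|w(x)|^p\meas(\dx),$$
and similarly with $\tE$. The approach is a change of variables. We condition on the independent time $E=t$, apply Fubini (everything is nonnegative), and write $W_t=\sqrt t\,G$ with $G\sim\mathcal N(0,\bI_d)$. Then
$$\int_{\bR^d}\E{|w(x+W_E)|^p}\meas(\dx)=\int_0^\infty\theta e^{-\theta t}\int_{\bR^d}\frac{e^{-|y|^2/2}}{(2\pi)^{d/2}}\int_{\bR^d}|w(x+\sqrt t y)|^p\frac{c\,\dx}{(1+|x|)^{q}}\,\dy\,\dt .$$

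In the inner integral we substitute $z=x+\sqrt t y$, which has unit Jacobian. The quantity to control is then the weight ratio $(1+|z|)^q/(1+|z-\sqrt t y|)^q$. The key elementary inequality is
$$1+|z|\leqslant 1+|z-\sqrt t y|+\sqrt t|y|\leqslant (1+|z-\sqrt t y|)(1+\sqrt t|y|)\leqslant (1+|z-\sqrt t y|)(t\vee1)^{1/2}(1+|y|),$$
valid because $\sqrt t\leqslant (t\vee1)^{1/2}$ and $(t\vee1)^{1/2}\geqslant1$. It gives
$$\frac{1}{(1+|z-\sqrt t y|)^q}\leqslant \frac{(t\vee1)^{q/2}(1+|y|)^q}{(1+|z|)^q}.$$
Plugging this in, the inner integral is bounded by $(t\vee1)^{q/2}(1+|y|)^q\int|w(z)|^p\meas(\dz)$. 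The $t$- and $y$-integrals then factor exactly into the stated constant $c_{p,\eqref{eq:condition:stability:rho}}^p$.

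For $\tE\sim\Gamma(1/2,\ttheta)$ the argument is identical, with the exponential density replaced by $\sqrt{\ttheta/(\pi t)}\,e^{-\ttheta t}$, which yields $\tilde c_{p,\eqref{eq:condition:stability:rho}}^p$.

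It remains to check three finiteness properties:
\begin{itemize}
\item both time integrals are finite, since polynomial growth is killed by $e^{-\theta t}$ and the singularity $t^{-1/2}$ is integrable at $0$;
\item the Gaussian moment of $(1+|y|)^q$ is finite;
\item $\int|x|^r\meas(\dx)<\infty$, and $\int|w|^p\dd\meas<\infty$ for any $w$ of polynomial growth $r$, because $|w|^p\lesssim 1+|x|^{pr}$ and $(1+|x|)^{pr-q}=(1+|x|)^{-d-1}$ is integrable on $\bR^d$. This is precisely why the exponent $pr+d+1$ was chosen.
\end{itemize}

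The only genuinely nontrivial step is the weight-ratio inequality, that is, obtaining a translation bound for the polynomial weight that is uniform in $z$ and has integrable dependence on $(t,y)$. The multiplicative Peetre-type bound above handles it. Everything else is Fubini and Tonelli with nonnegative integrands; measurability of $w$ suffices, with no continuity needed.
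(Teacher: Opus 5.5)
Your proposal is correct and follows essentially the same route as the paper. The paper also applies Fubini, substitutes $z=x+\sqrt{t}\,y$, and bounds the weight ratio by $(1+\sqrt{t}|y|)^{pr+d+1}\leqslant (t\vee 1)^{\frac{pr+d+1}{2}}(1+|y|)^{pr+d+1}$; the $t$- and $y$-integrals then factor into the stated constants, and the $\tE$ case is handled by the same computation.
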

\begin{proof}
Firstly, let us consider a measurable function $w : \bR^d \to \bR^{d'}\times \bR^{d'\times d}$ with a polynomial growth $r$. We remark that 
$$ \int_{\bR^d} \norm{w(x)}^p\meas(\dx) \leqslant C \int_{\bR^d} \frac{(1+|x|)^{pr}}{(1+|x|)^{pr+d+1}} \dx \leqslant C \int_{\bR^d} \frac{1}{(1+|x|)^{d+1}} \dx<+\infty.$$
Now we compute
\begin{align*}
  \int_{\bR^d} \E{\norm{w(X_E^x)}^p}\meas(\dx) =& \int_0^{+\infty} \int_{\bR^{2d}} \norm{w(x+\sqrt{t}y)}^p \frac{1}{(2\pi)^{d/2}}e^{-\frac{|y|^2}{2}} \frac{c}{(1+|x|)^{pr+d+1}} \theta e^{-\theta t} \dx\dy\dt\\
  =& \int_0^{+\infty} \int_{\bR^{2d}} \norm{w(z)}^p \frac{1}{(2\pi)^{d/2}}e^{-\frac{|y|^2}{2}} \frac{c}{(1+|z-\sqrt{t}y|)^{pr+d+1}} \theta e^{-\theta t} \dz\dy\dt.
\end{align*}
Since we have
$$\frac{(1+|z|)^{pr+d+1}}{(1+|z-\sqrt{t}y|)^{pr+d+1}} \leqslant \left(\frac{1+|z-\sqrt{t}y|+\sqrt{t}|y|}{1+|z-\sqrt{t}y|}\right)^{pr+d+1} \leqslant \left( 1+\sqrt{t}|y| \right)^{pr+d+1},$$
then we get
\begin{align*}
  &\int_{\bR^d} \E{\norm{w(X_E^x)}^p}\meas(\dx)\\
   \leq& \int_{\bR^d} \norm{w(z)}^p \meas(\dz) \int_0^{+\infty} \int_{\bR^{d}} \left( 1+\sqrt{t}|y| \right)^{pr+d+1} \frac{1}{(2\pi)^{d/2}}e^{-\frac{|y|^2}{2}} \theta e^{-\theta t} \dy\dt\\
  \leqslant & \int_{\bR^d} \norm{w(z)}^p \meas(\dz) \int_0^{+\infty} (t\vee 1)^{\frac{pr+d+1}{2}} \theta e^{-\theta t} \dt \int_{\bR^{d}} \left( 1+|y| \right)^{pr+d+1} \frac{1}{(2\pi)^{d/2}}e^{-\frac{|y|^2}{2}}  \dy
\end{align*}
which gives us the first announced result. The second estimate follows same computations.
\end{proof}

\begin{Remark}
When $X$ is not the Brownian motion, it would be possible to use some Aronson estimates (in the uniformly elliptic case) in order to get the same kind of bound for $c_{p,\eqref{eq:condition:stability:rho}}$ and $\tilde{c}_{p,\eqref{eq:condition:stability:rho}}$. In particular, it could be proved using the same approach as in the proof of  \cite[Theorem 1]{gobet:hal-01904457}. 
\end{Remark}

\section{Numerical schemes}
\label{section:scheme}
The aim of this section is to define some schemes in order to provide a numerical approximation of the function $v = (u, \bar{u})$ solution of \eqref{prop:eq:u} and \eqref{prop:eq:baru}. The first proposed scheme relies on the contraction property given by Proposition \ref{pr:Lpestimate} and a space discretization through a regular grid. A full study of this scheme error is obtained, see Theorem \ref{thm:errornum:infinitehorizon}. Then, we also provide a second and a third scheme based on NNs approximation.

\subsection{A first scheme based on a regular grid}\label{section:first_scheme}


{As previously, we consider in the following ${\rho}(x)=\rho_{r'}(x)=1+|x|^{r'}$ with $r' \geqslant r$ and where $r$ comes from \eqref{prop:representationBSDEinfinitehorizon:growth}.}
We denote $\Pi$ a non empty finite box subgrid of $\delta \bZ^d$, $N \in \bN^*$ its cardinality, $\delta>0$ its mesh size and $\Box$ its convex hull (in $\bR^d$). Without loss of generality we can assume that $0 \in \Box$. 
In order to define a multilinear interpolation procedure on $\Pi$, we consider the following basis functions:
$$\forall z \in \Pi, \quad\forall x \in \bR^d,\quad \psi_z(x):=\prod_{i=1}^d \left(1-|\delta^{-1}(x_i-z_i)|\right)_+,$$
where $(.)_+$ denotes the positive part function.
For a function $\phi : \Pi \rightarrow \bR^{d'}\times \bR^{d'\times d}$, we define $P\phi$ the interpolation of $\phi$ on $\bR^d$ as follows
\begin{equation*}
	P\phi(x):=
	\begin{cases}
		\sum_{z \in \Pi} \psi_z(x)\phi(z), & x \in \Box \\
		\sum_{z \in \Pi} \psi_z(\text{Proj}(x,\Box))\phi(z), & x\notin \Box\,. 
	\end{cases}
\end{equation*}
By a small abuse of notation we also consider this interpolation operator for functions $\phi :\bR^d \rightarrow \bR^{d'}\times \bR^{d'\times d}$ by defining $P\phi := P\phi_{|\Pi}$.
By the same definition, $P$ can act also on vector-valued and matrix-valued functions. 

This interpolation operator satisfies following standard properties.
\begin{Proposition}
\label{prop:properties-P}
Let  us consider $\phi, \tilde{\phi} : \bR^d \rightarrow \bR^{d'}\times \bR^{d'\times d} $. Then we have
\begin{enumerate}
    \item \label{item1} $$\norm{P\phi - P\tilde{\phi} }_{\infty} \leqslant \norm{\phi -\tilde{\phi}}_{\infty}\,.$$
    \item \label{item2} {If $\phi\in C^2(\bR^d,\bR^{d'}\times \bR^{d'\times d})$ such that $\norm{\nabla^2 \phi(x)} \leqslant C(1+|x|^{r'})$ for all $x \in \bR^d$. Then there exists $C\geqslant 0$ that does not depend on $\Pi$ such that
    $$\norm{\frac{P\phi-\phi}{\rho_{r'}}}_{\infty,\Box} := \sup_{x \in \Box}\norm{\frac{P\phi(x)-\phi(x)}{\rho_{r'}(x)}}\leqslant C\delta^2(1+\delta^{r'}).$$}
    \item \label{item3}
    \begin{equation}
        \label{prop:P:upperbound} 
        \norm{P\phi(x)} \leqslant P\rho_{r'} (x)\sup_{z \in \Pi}\norm{\frac{\phi(z)}{\rho_{r'} (z)}},\quad\forall x \in \bR^d.
    \end{equation}
    \item \label{item4} If $r' \geqslant 2$, there exists $C$ that does not depend on $\Pi$ such that
    \begin{equation}
        \label{prop:Prho/rho} 
        \sup_{x \in \bR^d} \left| \frac{P \rho_{r'}(x)}{\rho_{r'}(x)}\right| \leqslant 
        {1+C\delta^2(1+\delta^{r'})}.
    \end{equation}
\end{enumerate}
\end{Proposition}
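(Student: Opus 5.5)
The plan is to build everything on the two structural facts about the basis $\{\psi_z\}_{z\in\Pi}$: for $x\in\Box$, $\psi_z(x)\geqslant 0$ and $\sum_{z\in\Pi}\psi_z(x)=1$ (partition of unity on the box); and $\psi_z(x)\neq 0$ only when $z$ is a vertex of the grid cell containing $x$, i.e. $|x_i-z_i|<\delta$ for all $i$. For $x\notin\Box$ we reduce to $\text{Proj}(x,\Box)\in\Box$. Item \ref{item1} then follows at once: $P\phi(x)-P\tilde\phi(x)$ is a convex combination of the values $\phi(z)-\tilde\phi(z)$, so its norm is at most $\max_z\norm{\phi(z)-\tilde\phi(z)}\leqslant\norm{\phi-\tilde\phi}_\infty$. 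Item \ref{item3} is the same idea with weights: $\norm{P\phi(x)}\leqslant\sum_z\psi_z(\bar x)\rho_{r'}(z)\,\norm{\phi(z)}/\rho_{r'}(z)\leqslant P\rho_{r'}(x)\sup_{z\in\Pi}\norm{\phi(z)/\rho_{r'}(z)}$, where $\bar x$ is $x$ or its projection.

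For item \ref{item2}, fix $x\in\Box$ and use Taylor expansion of $\phi$ at $x$ for each vertex $z$ of its cell: $\phi(z)=\phi(x)+\nabla\phi(x)(z-x)+R_z$ with $|R_z|\leqslant\frac12|z-x|^2\sup_{[x,z]}|\nabla^2\phi|$. Multilinear interpolation reproduces affine functions exactly, i.e. $\sum_z\psi_z(x)(z-x)=0$, so $P\phi(x)-\phi(x)=\sum_z\psi_z(x)R_z$. Since $|z-x|\leqslant\sqrt d\,\delta$, we get $|R_z|\leqslant C\delta^2(1+(|x|+\sqrt d\delta)^{r'})\leqslant C\delta^2(1+|x|^{r'})(1+\delta^{r'})$, using $(a+b)^{r'}\leqslant C(a^{r'}+b^{r'})$ and $1+|x|^{r'}+\delta^{r'}\leqslant(1+|x|^{r'})(1+\delta^{r'})$. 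Dividing by $\rho_{r'}(x)$ gives the bound.

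Item \ref{item4} is the main obstacle, because it requires a bound on all of $\bR^d$, whereas item \ref{item2} only controls $\Box$. On $\Box$, I would apply item \ref{item2} to $\phi=\rho_{r'}$. This is where $r'\geqslant 2$ is needed: then $\rho_{r'}$ is $C^2$ with $|\nabla^2\rho_{r'}(x)|\leqslant C|x|^{r'-2}\leqslant C(1+|x|^{r'})$, while for $r'<2$ the Hessian blows up at $0$. This yields $P\rho_{r'}(x)/\rho_{r'}(x)\leqslant 1+C\delta^2(1+\delta^{r'})$ on $\Box$. Outside $\Box$, $P\rho_{r'}(x)=P\rho_{r'}(\bar x)$ with $\bar x=\text{Proj}(x,\Box)$. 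Since $0\in\Box$ and $\Box$ is convex, the projection is nonexpansive and fixes $0$, so $|\bar x|\leqslant|x|$ and hence $\rho_{r'}(\bar x)\leqslant\rho_{r'}(x)$. Therefore $P\rho_{r'}(x)/\rho_{r'}(x)\leqslant P\rho_{r'}(\bar x)/\rho_{r'}(\bar x)$, which reduces this case to the bound on $\Box$. The remaining step is to check that the constants in item \ref{item2} depend only on $d$ and $r'$, not on $\Pi$; this holds because they come only from the Hessian growth and the cell diameter $\sqrt d\,\delta$.
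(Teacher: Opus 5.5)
Your proof is correct. Items 1 and 3, and the part of item 4 on $\Box$, follow the paper exactly. For item 2 you prove the cell-wise estimate $\sup_H|P\phi-\phi|\leqslant C\delta^2\sup_H|\nabla^2\phi|$ directly, via a Taylor expansion at $x$ and exact reproduction of affine functions ($\sum_z\psi_z(x)=1$, $\sum_z\psi_z(x)z=x$ for $x\in\Box$); the paper simply quotes this estimate as standard. You also handle the weight pointwise at $x$ rather than through $\sup_H\rho_{r'}/\inf_H\rho_{r'}$. The substance is the same, and yours is self-contained.

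The one real divergence is the case $x\notin\Box$ in item 4, and there your argument is the better one. The paper claims $P\rho_{r'}(x)/\rho_{r'}(x)\leqslant 1$ outside $\Box$, because $0\in\Box$ and $\rho_{r'}$ is radially increasing. That is true for $d=1$ but false for $d\geqslant 2$. The point $\text{Proj}(x,\Box)$ lands on a face, where multilinear interpolation of the convex function $\rho_{r'}$ overshoots. For instance, take $d=2$, $r'=2$, $\delta=1$, $\Pi=\{-1,0,1\}^2$ and $x=(1+\eta,1/2)$. Then $P\rho_2(x)=\frac12\rho_2(1,0)+\frac12\rho_2(1,1)=5/2$, while $\rho_2(x)\to 9/4$ as $\eta\to 0$.

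Your reduction $P\rho_{r'}(x)/\rho_{r'}(x)=P\rho_{r'}(\bar x)/\rho_{r'}(x)\leqslant P\rho_{r'}(\bar x)/\rho_{r'}(\bar x)$ avoids this. It uses $P\rho_{r'}\geqslant 0$ and $|\bar x|\leqslant|x|$, the latter because projection onto the convex set $\Box$ is $1$-Lipschitz and fixes $0$. This transfers the bound on $\Box$ to all of $\bR^d$ and is what actually closes the proof. The stated inequality itself is unaffected; only the paper's intermediate justification needs this repair.
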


    \begin{proof}
        {The first item is standard. For the second one, let us remark that $\Box$ is the union of some hypercubes (also known as cells) whose side has length $\delta$. Let us denote $\mathcal{H}_{\Box}$ the set of cells contained into $\Box$. Then, by a standard result on multilinear interpolation, we have
        \begin{align}
            \sup_{x \in \Box} \norm{\frac{P\phi(x)-\phi(x)}{\rho_{r'}(x)}} &= \sup_{H \in \mathcal{H}_{\Box}} \sup_{x \in H} \norm{\frac{P\phi(x)-\phi(x)}{\rho_{r'}(x)}}
             \leqslant \sup_{H \in \mathcal{H}_{\Box}}  \frac{C \delta^2 \sup_{x \in H}\norm{\nabla^2 \phi(x)}}{\inf_{x \in H}\rho_{r'}(x)}
        \end{align}
        where $C$ does not depend on $H$. Then
        \begin{align}
            \sup_{x \in \Box} \norm{\frac{P\phi(x)-\phi(x)}{\rho_{r'}(x)}} &\leqslant C\delta^2 \sup_{H \in \mathcal{H}_{\Box}}  \frac{ 1+\sup_{x \in H} |x|^{r'}}{1+\inf_{x \in H} |x|^{r'}} \leqslant C\delta^2 \sup_{H \in \mathcal{H}_{\Box}}  \frac{ 1+(\inf_{x \in H} |x|+\sqrt{d}\delta)^{r'}}{1+\inf_{x \in H} |x|^{r'}}\leqslant C\delta^2(1+\delta^{r'}).
        \end{align}
        }
       For the third item, we have, for all $x \in \bR^d$,
     \begin{eqnarray*}
      \norm{P\phi(x)} &=&\norm{\sum_{z \in \Pi} \psi_z(\text{Proj}(x,\Box))\phi(z)} \leqslant \sum_{z \in \Pi} \psi_z (\text{Proj}(x,\Box))\rho_{r'}(z) \norm{ \frac{\phi(z)}{\rho_{r'}(z)}}\\
      &\leqslant& \sup_{z \in \Pi} \norm{\frac{\phi(z)}{\rho_{r'}(z)}} \sum_{z' \in \Pi} \psi_{z'} (\text{Proj}(x,\Box))\rho_{r'}(z') =  P\rho_{r'} (x) \sup_{z \in \Pi}\norm{\frac{\phi(z)}{\rho_{r'} (z)}}.
     \end{eqnarray*}
     So, it just remains to prove Item \ref{item4}.  
     Since $0 \in \Box$ and $\rho_{r'}$ is an increasing function with respect to $|x|$, then $\frac{P \rho_{r'}(x)}{\rho_{r'} (x)}\leqslant 1$ for all $x \notin \Box$. When $x \in \Box$, we can apply Item \ref{item2} 
     since $\rho_{r'}$ is $C^2$ recalling that $r' \geqslant 2$: we get
     {
     \begin{align*}
         \norm{\frac{P\rho_{r'}(x)}{\rho_{r'}(x)}} \leqslant 1 + \norm{\frac{P\rho_{r'}(x) -\rho_{r'}(x)}{\rho_{r'}(x)}} \leqslant 1+ C\delta^2(1+\delta^{r'})
     \end{align*}
     }
     which proves the result.
    \end{proof}

We also define a growth truncation operator $T_{R,{\rho}_r}$ by
$$T_{R,{\rho}_r} : \phi \mapsto \left\lfloor \frac{\phi}{{\rho}_r}\right\rfloor_R {\rho}_r $$
for any function $\phi : \bR^d \rightarrow \bR^{d'}\times \bR^{d'\times d}$ or $\phi : \Pi \rightarrow \bR^{d'}\times \bR^{d'\times d}$, where $\lfloor .\rfloor_{R}$ denotes the projection onto the Euclidean ball $\bar{B}(0,R)$ of $\bR^{d'}\times \bR^{d'\times d}$.

\subsubsection{Definition of the scheme} 
{We now define our numerical scheme. The construction relies on three ingredients: Picard iterations, spatial approximation of functions onto $\Pi$, and empirical estimation of expectations using $M$ i.i.d. samples.}

\begin{Definition}
    \label{def:scheme}
     We construct a sequence of random functions $v^{n}_M : \Omega \times \Pi \rightarrow \bR^{d'}\times \bR^{d' \times d}$, $n \in \bN$ such that $v^0_M=0$ and, for all $n \in \bN$, $z\in \Pi$,
\begin{equation}
 \label{def:defscheme1}
 v^{n+1}_M(z)=T_{B,{{\rho}_r}}\left(\frac1{M} \sum_{j=1}^{M} R^{(\cdot)}_{n,j}(Pv^n_M)\right)(z) , 
\end{equation}
where $B\geqslant \norm{ v}_{{\rho}_r}$ and for any $\phi = (w^1, w^2) : \bR^d \rightarrow \bR^{d'}\times \bR^{d' \times d}$ and $x \in \Pi$,  $(R_{n,j}^{x}(\phi))_{n \in \bN, j \in \bN^*}$ are i.i.d. random variables with the same distribution as 
\begin{align}
    \label{def:Rz}
R^{x}(\phi):=& \Bigg(\frac{1}{\theta } (f(\xxe, w^1(\xxe), w^2(\xxe)) + a w_1(\xxe))e^{-(a-\theta)E},\\
\nonumber
& \quad \sqrt{\frac{\pi}{{\ttheta} }}  \left(f(\Xx_\tE, w^1(\Xx_\tE), w^2(\Xx_\tE)) + \ta w^1(\Xx_\tE)\right)\sqrt{\tE}e^{-(\ta-\ttheta)\tE} \Ux_\tE\Bigg),
\end{align}
recalling that $\theta \in (0,a)$, $\ttheta \in (0,\ta)$, $E\sim\cE(\theta)$, $\tE \sim \Gamma(1/2,\ttheta)$ are independent and independent of $W$.
\end{Definition}

\begin{Remark}
At first sight, there is a weak dependence of the scheme on $v=(u, \bar{u})$, the unknown solution of \eqref{prop:eq:u} and \eqref{prop:eq:baru}, through the condition $B \geqslant \norm{ v}_{{\rho}_r}$. Nevertheless, Corollary \ref{cor:contraction} gives an upper bound on the ${\rho}_r$-norm of $v$. Another possibility to set $B$ is to track a tight upper bound for the constant $C$ in inequality \eqref{prop:representationBSDEinfinitehorizon:growth} from Proposition \ref{prop:representationBSDEinfinitehorizon}. 
{However, the truncation procedure does not appear to be necessary in practice: our numerical experiments in Section~\ref{ch4:numexperiments} exhibit convergence without it.}
\end{Remark}

\begin{Remark}
{The scheme given by Definition~\ref{def:defscheme1} is fully implementable, and a complete analysis of the numerical error is provided in Theorem~\ref{thm:errornum:infinitehorizon}. However, grid-based spatial approximations suffer from a well-known limitation: the grid size grows exponentially with the dimension $d$, making such schemes impractical in high-dimensional settings. A natural remedy is to replace the grid by a dimension-robust spatial approximation, such as a NN, though this comes at the cost of a more involved theoretical error analysis. This is the purpose of Section~\ref{subsection:NNnum}.}
\end{Remark}

\subsubsection{Theoretical study of the scheme}
The analysis will be done on $L_{\infty,\rho_{r'}}$.
In order to treat the statistical error, we consider Orlicz norms. We denote $\Psi : \bR^+ \rightarrow \bR^+$ an Orlicz function, that is a continuous non-decreasing function, vanishing in zero and with $\lim_{x \rightarrow + \infty} \Psi(x)=+\infty$ and we define the $\Psi$-Orlicz norm of a real random vector $Y$ by 
$$|Y|_{\Psi} := \inf \left\{ c >0, \Exp{\Psi\left(\frac{|Y|}{c}\right)} \leqslant 1 \right\}.$$
{The previous definition extends straightforwardly to matrix-valued random variables. We further assume that $\Psi$ is convex\footnote{convex Orlicz functions are also referred to ``Young functions'' or ``N-functions'' in the literature.} and increasing, which ensures that $| . |_{\Psi}$ is a norm and that $\Psi^{-1}$ is a concave function defined on $\bR^+$.}

In practice we will use following convex and increasing Orlicz functions.
\begin{itemize}
 \item For $\beta \in (0,1]$, $\Psi^{LT}_{\beta}(x):=e^{x^{\beta}}-1$. We remark that $|Y|_{\Psi^{LT}_{\beta}}<+\infty$ implies that there exists $\varepsilon>0$ such that $\Exp{e^{\varepsilon Y^\beta}}<+\infty$ which means that $Y$ is light-tailed.
 \item For $\beta >1$, $\Psi^{HT}_{\beta}(x):=e^{(\ln (1+x))^{\beta}}-1$. Observe that when $|Y|_{\Psi^{LT}_{\beta}}<+\infty$, $Y$ has finite polynomial moment of order $p$ for any $p>0$ but $Y^\alpha$ may not have exponential moment for any $\alpha>0$. 
 So, this Orlicz function is well suited for heavy-tailed random variables that are not fat-tailed. In particular the case $\beta=2$ fits well to log-normal tailed random variables. 
 \item For $\beta >1$, $\Psi^{FT}_{\beta}(x):=x^{\beta}$. We consider these Orlicz functions when $Y$ has some polynomial moments of finite order. This is the case for fat-tailed random variables.
\end{itemize}
All these Orlicz functions satisfy some Talagrand inequality and  maximal inequality. 

\begin{Proposition}
\label{prop:Talagrand-maximal}
$\Psi^{HT}_{\beta}$ $(\beta \in (0,1])$, $\Psi^{LT}_{\beta}$ $(\beta>1)$ and $\Psi^{FT}_{\beta}$ $(\beta >1)$ satisfy:
\begin{enumerate}
 \item {\bf [Talagrand inequality]} There exists a universal constant $C_{\Psi}$ such that, for all sequence $(Y_k)_{1 \leqslant k \leqslant K}$ of independent, mean zero, random variables satisfying $|Y_k|_{\Psi}<+\infty$ for all $1 \leqslant k \leqslant K$, we have
\begin{equation}
 \label{ineq:Talagrand}
 \left|\sum_{k=1}^K Y_k\right|_{\Psi} \leqslant C_{\Psi} \left( \Exp{\left|\sum_{k=1}^K Y_k\right|}+\left|\max_{1\leqslant k \leqslant K} |Y_k| \right|_{\Psi} \right).
\end{equation}
 \item {\bf [Maximal inequality]} There exists a universal constant $C_{\Psi}$ such that, for all sequence $(Y_k)_{1 \leqslant k \leqslant K}$ of random variables satisfying $|Y_k|_{\Psi}<+\infty$ for all $1 \leqslant k \leqslant K$, we have
 \begin{equation}
 \label{ineq:maximal}
  \left|\max_{1\leqslant k \leqslant K} |Y_k| \right|_{\Psi} \leqslant C_{\Psi} \Psi^{-1}(K) \max_{1\leqslant k \leqslant K} |Y_k|_{\Psi}.
 \end{equation}
\end{enumerate}
\end{Proposition}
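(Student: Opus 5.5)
This is a collection of classical facts about Orlicz norms, so I will reduce each item to known results rather than prove them from scratch. (The index ranges in the statement appear swapped relative to the definitions, which take $\Psi^{LT}_\beta$ for $\beta\in(0,1]$ and $\Psi^{HT}_\beta$ for $\beta>1$. I follow the definitions.) The common structure is this. Both inequalities hold for any convex increasing Orlicz function $\Psi$ that satisfies two regularity conditions. The first is a growth condition of the form $\Psi(x)\Psi(y)\leqslant \Psi(c\,xy)$ for $x,y$ large, or a suitable $\Delta_2$-type condition. The second is, for the Talagrand inequality, the property that the tail $\bP(|Y|>t)$ is controlled via $\Psi$ in the sense of Ledoux--Talagrand (Banach-space-valued Hoffmann-Jørgensen inequalities). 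My plan is to verify these structural conditions for each family and then invoke the general statements.

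\textbf{Maximal inequality.} I would first prove a self-contained version for any convex increasing $\Psi$ that satisfies a submultiplicativity-type bound $\Psi(x)\Psi(y)\leqslant \Psi(cxy)$ for $x,y\geqslant x_0$. This is the standard van der Vaart--Wellner Lemma 2.2.2 argument. Normalize so that $\max_k|Y_k|_\Psi\leqslant 1$. For $y\geqslant x_0$, use $\Psi(x/y)\leqslant \Psi(cx)/\Psi(y)$ on $\{x\geqslant x_0 y\}$ to get
\[
\Exp{\Psi\Big(\frac{\max_k|Y_k|}{cy}\Big)}\leqslant \Psi(x_0)+\sum_{k}\frac{\Exp{\Psi(|Y_k|)}}{\Psi(y)}\leqslant \Psi(x_0)+\frac{K}{\Psi(y)}.
\]
Choosing $y=\Psi^{-1}(K)$ and then rescaling by convexity gives $|\max_k|Y_k||_\Psi\leqslant C\,\Psi^{-1}(K)$; small $K$ is handled by $\Psi^{-1}(K)\geqslant\Psi^{-1}(1)$. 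It then remains to check the submultiplicativity for each family. For $\Psi^{FT}_\beta=x^\beta$ it holds with equality and $c=1$. For $\Psi^{LT}_\beta$ with $\beta\leqslant1$ one has $(e^{x^\beta}-1)(e^{y^\beta}-1)\leqslant e^{x^\beta+y^\beta}\leqslant e^{(xy)^\beta}$ once $x,y\geqslant 2^{1/\beta}$. For $\Psi^{HT}_\beta$ the needed bound is $(\ln(1+x))^\beta+(\ln(1+y))^\beta\leqslant(\ln(1+cxy))^\beta$ for large $x,y$. This follows from $s^\beta+t^\beta\leqslant (s+t)^\beta$ for $\beta\geqslant1$ together with $\ln(1+x)+\ln(1+y)\leqslant \ln(1+4xy)$ for $x,y\geqslant1$.

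\textbf{Convexity.} I also need to check that each $\Psi$ is convex and increasing, since the definition requires it. This fails slightly for $\Psi^{LT}_\beta$ with $\beta<1$ and for $\Psi^{HT}_\beta$ near $0$. The fix is to replace $\Psi$ by a convex function that agrees with it beyond some point $x_0$ (linear near $0$). The resulting norms are equivalent up to constants, so it suffices to prove both inequalities for the modified functions.

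\textbf{Talagrand inequality.} This is the step I expect to be the main obstacle, and for it I would rely on the literature rather than reprove it. For $\Psi^{LT}_\beta$, $\beta\in(0,1]$, it is exactly Talagrand's inequality (Ledoux--Talagrand, Theorem 6.21). For $\Psi^{FT}_\beta=x^\beta$ it is the Hoffmann-Jørgensen/Rosenthal-type moment inequality (Ledoux--Talagrand, Theorem 6.20), with a constant depending only on $\beta$. For $\Psi^{HT}_\beta$ I would derive it from Hoffmann-Jørgensen's tail inequality
\[
\bP(|S|>3t+s)\leqslant \bP(\max_k|Y_k|>s)+\bP(|S|>t)^2.
\]
Here $t$ is a multiple of $\Exp{|S|}$, via Markov and Lévy--Ottaviani for symmetrized sums, with desymmetrization using mean zero. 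I then iterate this inequality and integrate against $\Psi$, using that $\Psi^{HT}_\beta$ has moderate growth, $\Psi(2x)\leqslant C\Psi(x)^{c}$. This Hoffmann-Jørgensen route is the delicate part, since the constant must stay universal. If it proves too fiddly, I would fall back on citing the general Orlicz version of Talagrand's inequality for functions with $\Psi^{-1}$ concave and appropriate growth, as in the earlier work of Gobet, Richou and Szpruch.
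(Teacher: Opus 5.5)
Your plan coincides with the paper's proof, which is entirely by citation, for the maximal inequality (the argument of \cite[Lemma 2.2.2]{vanderVart-Wellner-96}) and for the Talagrand inequality in the $\Psi^{LT}_\beta$ and $\Psi^{FT}_\beta$ cases (\cite[Theorems 3 and 1]{Talagrand-89}, i.e.\ the Ledoux--Talagrand results you invoke). You also go beyond the paper in three respects. You verify the submultiplicativity hypothesis of that lemma for each family. You correctly note that the $\beta$-ranges in the statement are swapped. And you note that $\Psi^{LT}_\beta$ is not convex for $\beta<1$, so it must be modified near $0$.

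The one genuine difference is the Talagrand inequality for $\Psi^{HT}_\beta$: the paper cites \cite[Theorem 2.1]{Chamakh-Gobet-Liu-21}, while you propose a Hoffmann-J{\o}rgensen iteration. That route does close. After symmetrization, normalize so that $\E{|S|}+\bigl|\max_k|Y_k|\bigr|_\Psi\leqslant 1$. The inequality $\bP(|S|>t)\leqslant 4\,\bP(|S|>t/3)^2+1/\Psi(t/3)$ then propagates the bound $\bP(|S|>t)\leqslant A/\Psi(t/D)$ by induction on $t$. The induction works because $\Psi(y)^2\geqslant 8A\,\Psi(3y)$ for large $y$, which holds since $2u^\beta-(u+\ln 3)^\beta\to+\infty$ for $\beta>1$. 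You choose $D$ after $A$ so that Markov's bound covers the initial range, and the resulting tail bound integrates to an Orlicz bound.

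This gives a self-contained proof with constants depending only on $\beta$, at the price of bookkeeping that the citation avoids. The same squaring trick cannot treat $\Psi^{LT}_1$, where $\Psi(y)^2$ is only comparable to $\Psi(2y)$; that is why Talagrand's argument is needed there.

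Two small corrections. First, $\Psi^{HT}_\beta$ is actually convex on all of $[0,+\infty)$ for $\beta>1$: with $L=\ln(1+x)$, its second derivative has the sign of $\beta L^\beta-L+\beta-1>0$. So no modification is needed for that family. Second, your fallback reference should be \cite[Theorem 2.1]{Chamakh-Gobet-Liu-21} rather than a general result for convex $\Psi$ (concave $\Psi^{-1}$). Convexity alone does not suffice: the inequality fails for $\Psi(x)=e^{x^2}-1$, e.g.\ for sums of $n$ centred Bernoulli$(1/n)$ variables as $n\to+\infty$. The growth condition that actually matters is the domination $\Psi(y)^2\gg\Psi(3y)$ used above.
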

\begin{proof}
Talagrand inequality for $\Psi^{LT}_{\beta}$ and $\Psi^{FT}_{\beta}$ comes from  \cite[Theorem 3 and Theorem 1]{Talagrand-89}. Talagrand inequality for $\Psi^{HT}_{\beta}$ is given by \cite[Theorem 2.1]{Chamakh-Gobet-Liu-21}. Maximal inequality is provided by  \cite[Lemma 2.2.2]{vanderVart-Wellner-96} (see also  \cite[Theorem 2.2]{Chamakh-Gobet-Liu-21} for $\Psi^{HT}_{\beta}$).
\end{proof}

\begin{Proposition}
\label{prop:numericalerror}
Assume the following hypotheses:
\begin{enumerate}
 \item {Assumptions \eqref{as:generatorf}, \eqref{as:b:sigma}, \eqref{as:int:X:nablaX} and \eqref{as:suprho} are fulfilled, $a>\theta$ and $\ta>\ttheta$. We set $r'\geqslant 2$ such that $r'\geqslant r$ where $r$ comes from \eqref{prop:representationBSDEinfinitehorizon:growth}.
 }
 \item $\kappa_{\infty}<1$, recalling that $\kappa_{\infty}$ is defined in Proposition \ref{pr:Lpestimate}, so that 
 the fixed point equation $\Phi(v)=v$ has a unique solution $v \in C^0(\bR^d,\bR^{d'} \times\bR^{d \times d'})$ satisfying $\norm{ v}_{{\rho}_r}<+\infty$. We set $\norm{ v}_{{\rho}_r} \leqslant B <+\infty$ which implies that $\norm{v(x)} \leqslant B(1+|x|^r)$.
\item {$v \in C^2(\bR^d,\bR^{d'}\times \bR^{d \times d'})$ such that $\norm{\nabla^2 v(x)} \leqslant C(1+|x|^{r'})$.}
\item There exists a convex and increasing Orlicz function $\Psi$ satisfying Talagrand inequality \eqref{ineq:Talagrand}, Maximal inequality \eqref{ineq:maximal} 
 and there exists $C>0$ that does not depend on $\Pi$ (but depends on $B$) such that, for all functions $\phi : \bR^d \rightarrow \bR^{d'} \times \bR^{d' \times d}$ continuous with a growth $\norm{\phi(x)}\leqslant B(1+|x|^r)$ and $z \in \Pi$,
 \begin{equation}
  \label{ass:finite-Orlicz}
  \left| \frac{R^z(P\phi)}{{\rho_{r'}(z)}} - \Exp{\frac{R^{z}(P\phi)}{{\rho_{r'}(z)}}} \right|_{\Psi} + \Exp{\norm{\frac{R^z(P\phi)}{{\rho_{r'}(z)}} - \Exp{\frac{R^{z}(P\phi)}{{\rho_{r'}(z)}}} }^2} \leqslant C.
 \end{equation}
\end{enumerate}
{Then there exist some constants $C>0$, $\varepsilon \in (0,\kappa_{\infty}^{-1}-1)$ and $\delta_0>0$ such that, for all $M \in \bN^*$, $N \in \bN^*$, $n \in \bN$ and $\delta \leqslant \delta_0$,
\begin{align*}
\Exp{\sup_{x \in \bR^d} \norm{\frac{ Pv^{n}_M(x) -v(x)}{\rho_{r'}(x)}}}
  \leqslant &C \left(\frac{\Psi^{-1}(N)}{\sqrt{M}}\left(1+\frac{\Psi^{-1}(M)}{\sqrt{M}}\right)+\delta^2
 +  \sup_{x \in \bR^d \setminus \Box} \left(\frac{1+|x|^r}{\rho_{r'}(x)}\right) +(1+\varepsilon)^n\kappa_{\infty}^n   \right).
\end{align*}}
\end{Proposition}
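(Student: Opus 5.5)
We will derive a one-step recursive inequality for the weighted sup-error and then iterate it. Set $e_n := \Exp{\sup_{x\in\bR^d}\norm{(Pv^n_M(x)-v(x))/\rho_{r'}(x)}}$.

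\textbf{Step 1: splitting the error at step $n$.} We decompose $Pv^{n+1}_M - v$ as
\[
\bigl(Pv^{n+1}_M - Pv\bigr) + \bigl(Pv - v\bigr).
\]
The second term is handled by cases on $x$. For $x\in\Box$, Proposition~\ref{prop:properties-P} Item~\ref{item2} and hypothesis~3 give a bound of order $C\delta^2$ in the $\rho_{r'}$-weighted norm. For $x\notin\Box$, we have $Pv(x)=Pv(\mathrm{Proj}(x,\Box))$. This is bounded by $B(1+|\mathrm{Proj}(x,\Box)|^r)+C\delta^2\rho_{r'}$, and both $\norm{v(x)}$ and this quantity are at most $C(1+|x|^r)$. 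Dividing by $\rho_{r'}(x)$ produces the truncation term $\sup_{x\notin\Box}(1+|x|^r)/\rho_{r'}(x)$.

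For the first term, Item~\ref{item3} yields
\[
\norm{Pv^{n+1}_M(x)-Pv(x)} \leqslant P\rho_{r'}(x)\,\sup_{z\in\Pi}\norm{(v^{n+1}_M(z)-v(z))/\rho_{r'}(z)},
\]
and Item~\ref{item4} (this is where $r'\geqslant 2$ is used) bounds $P\rho_{r'}/\rho_{r'}$ by $1+C\delta^2$. We choose $\delta_0$ so that $(1+C\delta_0^2)\leqslant 1+\varepsilon$.

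\textbf{Step 2: error on the grid.} Since $\norm{v}_{\rho_r}\leqslant B$, we have $v=T_{B,\rho_r}v$. The projection onto a ball is $1$-Lipschitz and $\rho_r\leqslant C\rho_{r'}$ (indeed $\rho_r/\rho_{r'}\leqslant 2$), so up to a harmless constant we can remove the truncation:
\[
\norm{v^{n+1}_M(z)-v(z)}\leqslant \norm{\tfrac1M\textstyle\sum_j R^z_{n,j}(Pv^n_M)-v(z)}.
\]
We then write $v(z)=\Phi(v)(z)$ and split
\[
\tfrac1M\textstyle\sum_j R^z_{n,j}(Pv^n_M)-v(z)
=\Bigl[\tfrac1M\sum_j R^z_{n,j}(Pv^n_M)-\Phi(Pv^n_M)(z)\Bigr]+\Bigl[\Phi(Pv^n_M)(z)-\Phi(v)(z)\Bigr].
\]
Conditionally on the past, the samples $R_{n,j}$ are fresh and independent, and $v^n_M$ is $\cF$-measurable with respect to the earlier samples. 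The contraction estimate of Proposition~\ref{pr:Lpestimate} gives $\norm{(\Phi(Pv^n_M)-\Phi(v))/\rho_{r'}}_\infty\leqslant \kappa_\infty\norm{(Pv^n_M-v)/\rho_{r'}}_\infty$, with the weight $\rho_{r'}$ in $c_\infty$. The truncation ensures $\norm{Pv^n_M(x)}\leqslant B(1+|x|^r)$ up to the factor $P\rho_r/\rho_r$, so hypothesis~4 applies.

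\textbf{Step 3: statistical error (main obstacle).} We must bound
\[
\Exp{\max_{z\in\Pi}\norm{\tfrac1M\textstyle\sum_j (Y^z_j-\Exp{Y^z_j})}},\qquad Y^z_j:=R^z_{n,j}(Pv^n_M)/\rho_{r'}(z),
\]
conditionally on $v^n_M$. We will first pass from $\Exp{\cdot}$ to the $\Psi$-norm, and then apply the maximal inequality~\eqref{ineq:maximal} over the $N$ grid points to get a factor $\Psi^{-1}(N)$. For each fixed $z$ we use Talagrand's inequality~\eqref{ineq:Talagrand}. Its $L_1$ part is bounded by $\sqrt{M}\,C$ through the $L_2$ bound in~\eqref{ass:finite-Orlicz}. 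Its max part is bounded by $C\Psi^{-1}(M)$ through the maximal inequality and~\eqref{ass:finite-Orlicz}. Dividing by $M$ gives
\[
\frac{C\Psi^{-1}(N)}{\sqrt M}\Bigl(1+\frac{\Psi^{-1}(M)}{\sqrt M}\Bigr).
\]
The delicate points are the following. The bound must be uniform in the random, data-dependent function $Pv^n_M$, which is handled by conditioning and using that~\eqref{ass:finite-Orlicz} holds uniformly over admissible $\phi$. The noise terms from different iterations also need to be handled correctly; the recursion only uses expectations, so this is fine.

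\textbf{Step 4: conclusion.} Combining the steps gives
\[
e_{n+1}\leqslant (1+\varepsilon)\kappa_\infty e_n + C\,(\text{stat}+\delta^2+\text{trunc}).
\]
We also have $e_0=\norm{v/\rho_{r'}}_\infty\leqslant 2B$. Since $(1+\varepsilon)\kappa_\infty<1$, unrolling the recursion with a geometric sum yields the stated bound.
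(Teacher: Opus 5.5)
Your proposal is correct and follows essentially the same route as the paper: Items~3 and~4 of Proposition~\ref{prop:properties-P} move from the grid to $\bR^d$ at the cost of a factor $1+\varepsilon$. The $1$-Lipschitz projection with $T_{B,\rho_r}v=v$, the $\rho_{r'}$-weighted contraction, Jensen plus the maximal and conditional Talagrand inequalities for the statistical term, and a geometric unrolling of $e_{n+1}\leqslant(1+\varepsilon)\kappa_\infty e_n+C(\cdots)$ complete the argument, exactly as in the paper. The only difference is that you strip the truncation once before splitting, instead of keeping it inside the paper's terms $\mathcal{E}_{\infty,1}$ and $\mathcal{E}_{\infty,2}$.

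Two small points to tidy:
\begin{itemize}
\item The truncation removal holds with constant exactly $1$. No ``harmless constant'' is affordable there, since it would multiply $\kappa_\infty$.
\item For $x\notin\Box$, the piece $C\delta^2\rho_{r'}(\mathrm{Proj}(x,\Box))/\rho_{r'}(x)$ is bounded by $C\delta^2$, using $|\mathrm{Proj}(x,\Box)|\leqslant|x|$ since $0\in\Box$. It is not bounded by $C(1+|x|^r)/\rho_{r'}(x)$.
\end{itemize}
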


The proof of Proposition \ref{prop:numericalerror} is postponed to Section \ref{proof:prop:numericalerror}.

\begin{Remark}
\label{rem:comments-num-error}
 The upper bound obtained for the numerical error in Proposition \ref{prop:numericalerror} can be easily analyzed: 
 \begin{enumerate}
 \item The first term is the statistical error coming from the approximation of the expectation by an empirical mean. When we consider Orlicz functions $\Psi^{LT}_{\beta}$ ($\beta \in (0,1]$), $\Psi_\beta^{HT}$ ($\beta > 1$) and $\Psi_\beta^{FT}$ ($\beta \geqslant 2$), then 
 \begin{align*}
 \frac{(\Psi^{LT}_{\beta})^{-1}(N)}{\sqrt{M}}\left(1+\frac{(\Psi^{LT}_{\beta})^{-1}(M)}{\sqrt{M}}\right) & = O\left( \frac{(\ln(1+N))^{1/\beta}}{\sqrt{M}} \right),\\
 \frac{(\Psi^{HT}_{\beta})^{-1}(N)}{\sqrt{M}}\left(1+\frac{(\Psi^{HT}_{\beta})^{-1}(M)}{\sqrt{M}}\right) & = O\left( \frac{e^{(\ln(1+N))^{1/\beta}}}{\sqrt{M}} \right),\\
 \text{and } \quad \frac{(\Psi^{FT}_{\beta})^{-1}(N)}{\sqrt{M}}\left(1+\frac{(\Psi^{FT}_{\beta})^{-1}(M)}{\sqrt{M}}\right) & = O\left( \frac{N^{1/\beta}}{\sqrt{M}} \right),
 \end{align*}
 where the $O(.)$ notation means up to a constant which, here, is uniform in the various numerical parameters $(\delta, N, M, n)$.

 \item The second term is related to the space discretization by a discrete grid $\delta \bZ^d$.
 \item  The third term is a truncation error. In order to get a good control on it we must consider a weight function $\rho_{r'}$ with a bigger growth than the solution $v$, i.e. $r'>r$. 
  \item The last term comes from the Picard procedure using the contraction property.
 \end{enumerate}
\end{Remark}

\begin{Remark}
The calculations in the proof of Proposition \ref{prop:numericalerror} follow the same line of reasoning as those in Proposition 7 of \cite{Gobet-Richou-Szpruch-26}. However, we have identified an error in the proof of the latter\footnote{In the study of error $\mathcal{E}_{\infty,2}$, page 19 in \cite{Gobet-Richou-Szpruch-26}, a term $\sup_{x \in \bR^d} \frac{P\rho_{r'}(x)}{\rho_{r'}(x)}$ must appear in the last line of inequalities.} which, once corrected, results in $(1+\varepsilon)$ appearing before $\kappa_{\infty}$, as well as the additional constraint $\delta \leqslant \delta_0$. Note that the result of Corollary 3.1 of \cite{Gobet-Richou-Szpruch-26} is also affected in the same way.
\end{Remark}



We specify now the error given by Proposition \ref{prop:numericalerror} for several set of assumptions on $b$, $\sigma$ and $f$.
We also assume that our grid $\Pi$ is centered in $0$, and is given by
$$\left\{(i_1\delta,...,i_d \delta) \,|\, i_k \in \{-\tilde{N},...,\tilde{N}\}, k \in \{1,...,d\} \right\}$$
for a given $\tilde{N} \in \bN$. It implies that $N=(2\tilde{N}+1)^d$. 

\begin{Lemma}
\label{lem:estim:orlicz}
 Assume \eqref{as:generatorf}, \eqref{as:b:sigma} and \eqref{as:int:X:nablaX}.\\ We set $r'>r$ where $r$ comes from \eqref{prop:representationBSDEinfinitehorizon:growth}. 
 \begin{enumerate}
  \item \label{lem:estim:orlicz:item1} If $\sigma$ is constant, $\ta-\ttheta-(1+r)K_b>0$ and $a-\theta-r K_b>0$, then \eqref{ass:finite-Orlicz} is fulfilled for $\Psi_{1/(r+1)}^{LT}$.
  \item \label{lem:estim:orlicz:item2} If $f(x,y,z)=f(x,y)$ and $a-{\theta}-r K_b>0$, then \eqref{ass:finite-Orlicz} is fulfilled for $\Psi_{1/(r\vee 1)}^{LT}$. 
  \item \label{lem:estim:orlicz:item3} If $\beta \geqslant 2$,  $\ta-(r+1)K_b-\frac{2\beta-1}{2} K_{\sigma}^2-\frac{\beta-1}{\beta} \ttheta>0$ and $a-rK_b-\frac{\beta-1}{\beta} {\theta}>0$, then \eqref{ass:finite-Orlicz} is fulfilled for $\Psi_{\beta}^{FT}$.
 \end{enumerate}
\end{Lemma}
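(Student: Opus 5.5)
The plan is to reduce \eqref{ass:finite-Orlicz} to a moment or tail bound for $|R^z(P\phi)|/\rho_{r'}(z)$ that is uniform in $z\in\Pi$, in the grid, and in $\phi$ with $\norm{\phi(x)}\leqslant B(1+|x|^r)$. Since $|\cdot|_\Psi$ is a norm and, by Jensen, $|\Exp{Y}|_\Psi\leqslant C|Y|_\Psi$, it suffices to bound $|R^z(P\phi)/\rho_{r'}(z)|_\Psi$ and $\Exp{|R^z(P\phi)|^2}/\rho_{r'}(z)^2$.

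The first step is the pointwise growth. By Proposition \ref{prop:properties-P} item \ref{item3}, together with $P\rho_r\leqslant C\rho_r$ (the interpolation is a convex combination and the projection on $\Box$ decreases $|x|$), we get $\norm{P\phi(x)}\leqslant C B(1+|x|^r)$ uniformly in $\Pi$. Combined with the growth of $f$ in \eqref{as:generatorf}, this gives $|f(y,P\phi(y))+aP\phi^1(y)|\leqslant C(1+|y|^r)$. Hence
$$|R^z(P\phi)|\leqslant C(1+|\Xx_E|^r)e^{-(a-\theta)E}+C(1+|\Xx_\tE|^r)\sqrt{\tE}e^{-(\ta-\ttheta)\tE}|\Ux_\tE|$$
with $x=z$. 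Since $r'>r$, $\rho_{r'}(z)\geqslant c\rho_r(z)$, so we only need to control these two terms divided by $1+|z|^r$.

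\emph{First term (all items).} I would use the pathwise bound $|\Xx_t|\leqslant |x|+C(1+t)+ \sup_{s\leqslant t}|\int_0^s\sigma\,\dW|$, via Gronwall, giving $|\Xx_t|\leqslant (|x|+C+ M_t)e^{K_bt}$. Then
$$\frac{(1+|\Xx_E|^r)e^{-(a-\theta)E}}{1+|x|^r}\leqslant C(1+M_E^r)e^{-(a-\theta-rK_b)E}.$$
With $\sigma$ bounded, $M_t$ is Gaussian-tailed at scale $\sqrt t$. Because $a-\theta-rK_b>0$, the exponential factor kills the growth in $E$, and the tail of $M_E^r e^{-cE}$ is like $\exp(-c'y^{2/r})$ at worst. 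In particular $\Exp{\exp(\epsilon Y^{1/(r\vee1)})}<\infty$, which gives item \ref{lem:estim:orlicz:item2} (where the second term is absent) and the $\Psi^{LT}$ part of the first term in item \ref{lem:estim:orlicz:item1}. For item \ref{lem:estim:orlicz:item3}, I would instead take $\beta$-th moments, $\Exp{|\Xx_t|^{r\beta}}\leqslant C(1+|x|^{r\beta})e^{(r\beta K_b+\varepsilon)t}$ (Remark \ref{rem:boundestimatesX}), and integrate against the exponential density. This requires $\beta(a-\theta)+\theta-r\beta K_b>0$, i.e.\ $a-rK_b-\frac{\beta-1}{\beta}\theta>0$, which is exactly the stated condition. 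The $L^2$ part of \eqref{ass:finite-Orlicz} follows in the same way, or from the Orlicz bound.

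\emph{Second term, the main obstacle.} For item \ref{lem:estim:orlicz:item1}, with $\sigma$ constant, $\nabla_x\Xx_s$ is deterministic-bounded by $e^{K_bs}$. The weight $\Ux_t$ is then, conditionally on $\tE=t$, a stochastic integral with deterministic-bounded integrand, hence Gaussian with variance $\leqslant Ce^{2K_bt}/t$. Multiplying by $\sqrt t$ removes the singularity, and $(1+M_t^r)\sqrt t|\Ux_t|e^{-(\ta-\ttheta)t}$ is bounded by a product of Gaussian-type variables times $e^{-(\ta-\ttheta-(r+1)K_b)t}$. A product of a degree-$r$ and a degree-one Gaussian functional has tails $\exp(-cy^{2/(r+1)})$, so $\Psi^{LT}_{1/(r+1)}$ is finite; the $\Gamma(1/2)$ density near $0$ is harmless. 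For item \ref{lem:estim:orlicz:item3}, I would use H\"older to split $\Exp{|\Xx_t|^{r\beta}|\Ux_t|^\beta}$ and bound $\Exp{|\Ux_t|^{2\beta}}$ via BDG by $Ct^{-\beta}\sup_{s\leqslant t}\Exp{\norm{\nabla_x\Xx_s}^{2\beta}}$. Using Remark \ref{rem:boundestimatesX} with $p=\beta$, this gives growth $e^{(\beta K_b+\beta(2\beta-1)K_\sigma^2/2)t}$ after taking the square root in H\"older. Integrating $t^{\beta/2}t^{-\beta/2}t^{-1/2}e^{-\beta(\ta-\ttheta)t-\ttheta t}$ times these exponentials converges exactly under the stated condition on $\ta$, after dividing by $\beta$. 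The delicate point is doing the H\"older split so that the exponents combine to $(r+1)K_b+\frac{2\beta-1}{2}K_\sigma^2$ rather than something worse. If a direct split loses constants, I would apply It\^o to $|\Xx_t|^{r\beta}$ jointly with the stochastic integral, or absorb the loss into $\varepsilon$ from Remark \ref{rem:boundestimatesX}.
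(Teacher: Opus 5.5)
Your proposal is correct and follows essentially the same route as the paper. The shared ingredients are a pathwise Gr\"onwall bound on $X$ in the spirit of Briand--Hu, and sub-Gaussian control of $\sqrt{t}\,\Ux_t$ when $\sigma$ is constant (via $\norm{\nabla_x \Xx_s}\leqslant e^{K_b s}$), with the discount factors absorbing the exponential growth under the stated conditions on $a-\theta$ and $\ta-\ttheta$. For item 3 both use the Cauchy--Schwarz split, the BDG bound on $t^{\beta}|\Ux_t|^{2\beta}$ and Remark \ref{rem:boundestimatesX} with $p=\beta$; the direct split already gives exactly the exponent $(r+1)K_b+\frac{2\beta-1}{2}K_\sigma^2$, so your It\^o fallback is unnecessary. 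Two small corrections: when $b$ is nonlinear, $\nabla_x \Xx$ is random (merely bounded), so $\sqrt{t}\,\Ux_t$ is sub-Gaussian rather than Gaussian, which the paper obtains from the Dambis--Dubins--Schwarz representation; and your uniform bound $P\rho_r\leqslant C\rho_r$ requires the mesh $\delta$ to stay bounded.
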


The proof of Lemma \ref{lem:estim:orlicz} is postponed to Section \ref{proof:lem:estim:orlicz}.

\begin{Theorem}
\label{thm:errornum:infinitehorizon}
 {Assume \eqref{as:generatorf}, \eqref{as:b:sigma}, \eqref{as:int:X:nablaX} and \eqref{as:suprho}, $a>\theta$ and $\ta>\ttheta$.\\
 We set $r'\geqslant 2$ such that $r'>r$ where $r$ comes from \eqref{prop:representationBSDEinfinitehorizon:growth}. We also assume that $\kappa_{\infty}<1$, $\norm{ v}_{{\rho}_r} \leqslant B <+\infty$ and $v \in C^2(\bR^{d},\bR^{d'}\times \bR^{d' \times d})$ with the growth $\norm{\nabla^2 v(x)} \leqslant C(1+|x|^{r'})$.
 
Then there exist some constants $\varepsilon \in (0,\kappa_{\infty}^{-1}-1)$ and $\delta_0>0$ such that the following holds for $M \in \bN^*$, $\tilde{N} \in \bN$, $n \in \bN$ and $\delta \leqslant \delta_0$.
 
  \begin{enumerate}
  \item If $\sigma$ is constant, $\ta-\ttheta-(1+r)K_b>0$ and ${a}-{\theta}-r K_b>0$, then 
  $$\Exp{ \sup_{x \in \bR^d} \norm{\frac{ Pv^{n}_M(x) -v(x)}{\rho_{r'}(x)}} } = O\left( \delta^2+\frac{(\ln (2+\tilde{N}))^{1/(r+1)}}{\sqrt{M}} + |1 \vee (\tilde{N}\delta)|^{r-r'}+ \left(\kappa_{\infty}(1+\varepsilon)\right)^n \right).$$
  \item If $f(x,y,z)=f(x,y)$ and $a-{\theta}-r K_b>0$, then
  $$\Exp{ \sup_{x \in \bR^d} \norm{\frac{ Pv^{n}_M(x) -v(x)}{\rho_{r'}(x)}}} = O\left( \delta^2+\frac{(\ln (2+\tilde{N}))^{1/(r\vee 1)}}{\sqrt{M}} + |1 \vee (\tilde{N}\delta)|^{r-r'} +\left(\kappa_{\infty}(1+\varepsilon)\right)^n \right).$$
  \item If $\beta \geqslant 2$,  $\ta-(r+1)K_b-\frac{2\beta-1}{2} K_{\sigma}^2-\frac{\beta-1}{\beta} \ttheta>0$ and $a-rK_b-\frac{\beta-1}{\beta} {\theta}>0$, then 
  $$\Exp{ \sup_{x \in \bR^d} \norm{\frac{ Pv^{n}_M(x) -v(x)}{\rho_{r'}(x)}}} = O\left( \delta^2+\frac{(1+2\tilde{N})^{d/\beta}}{\sqrt{M}} + |1 \vee (\tilde{N}\delta)|^{r-r'} +\left(\kappa_{\infty}(1+\varepsilon)\right)^n \right).$$
 \end{enumerate}
}

\end{Theorem}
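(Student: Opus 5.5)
The theorem follows from Proposition \ref{prop:numericalerror} once we check its hypotheses and make each term in its bound explicit for the centered grid $\Pi$. Hypotheses 1--3 of Proposition \ref{prop:numericalerror} are assumed directly in the theorem: $r'\geqslant 2$, $r'\geqslant r$, $\kappa_\infty<1$, $\norm{v}_{\rho_r}\leqslant B$, and the $C^2$ regularity of $v$ with a $\rho_{r'}$-growth Hessian. Hypothesis 4 is supplied case by case by Lemma \ref{lem:estim:orlicz}. The three cases of the theorem correspond exactly to its three items, giving respectively $\Psi=\Psi^{LT}_{1/(r+1)}$, $\Psi=\Psi^{LT}_{1/(r\vee1)}$ and $\Psi=\Psi^{FT}_\beta$. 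Proposition \ref{prop:Talagrand-maximal} guarantees that these functions satisfy the Talagrand and maximal inequalities. Hence in each case Proposition \ref{prop:numericalerror} yields constants $C$, $\varepsilon$, $\delta_0$ and the four-term bound.

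Two of the four terms need no further work. The $\delta^2$ term and the Picard term $(1+\varepsilon)^n\kappa_\infty^n$ appear verbatim.

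For the truncation term, $\Box=[-\tilde N\delta,\tilde N\delta]^d$. If $x\notin\Box$, then $|x|\geqslant\|x\|_\infty>\tilde N\delta$. Since $r'>r$, the function $t\mapsto(1+t^r)/(1+t^{r'})$ is bounded on $\bR^+$ by a constant and, for $t\geqslant 1$, is bounded by $2t^{r-r'}$. It follows that
\[
\sup_{x\notin\Box}\frac{1+|x|^r}{\rho_{r'}(x)}\leqslant C\,\bigl(1\vee(\tilde N\delta)\bigr)^{r-r'}.
\]
The case $\tilde N\delta<1$ is covered by the boundedness of the ratio, since then the right-hand side is $C$.

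For the statistical term we use $N=(2\tilde N+1)^d$. For $\Psi^{LT}_\beta$ we have $\Psi^{-1}(y)=(\ln(1+y))^{1/\beta}$, so
\[
\Psi^{-1}(N)=(d\ln(2\tilde N+1)+\ln 2)^{1/\beta}\leqslant C(\ln(2+\tilde N))^{1/\beta},
\]
where $C$ depends on $d$. In addition $\Psi^{-1}(M)/\sqrt M=(\ln(1+M))^{1/\beta}/\sqrt M$ is bounded uniformly in $M$, so the factor $(1+\Psi^{-1}(M)/\sqrt M)$ is $O(1)$. This gives $(\ln(2+\tilde N))^{1/(r+1)}/\sqrt M$ and $(\ln(2+\tilde N))^{1/(r\vee1)}/\sqrt M$ in the first two cases. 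For $\Psi^{FT}_\beta$ we have $\Psi^{-1}(y)=y^{1/\beta}$, so $\Psi^{-1}(N)=(1+2\tilde N)^{d/\beta}$. Also $M^{1/\beta}/\sqrt M\leqslant 1$ because $\beta\geqslant2$. Together these give the third case.

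No real obstacle arises. The only point requiring care is that all constants are uniform in $(\delta,\tilde N,M,n)$. This holds because the constant in Lemma \ref{lem:estim:orlicz} depends only on $B$ and the model, and the conversions above introduce only $d$-dependent constants.
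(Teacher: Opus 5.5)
Your proposal is correct and follows the paper's proof. You verify the hypotheses of Proposition \ref{prop:numericalerror} via Lemma \ref{lem:estim:orlicz} and Proposition \ref{prop:Talagrand-maximal}, bound the truncation term by $C(1\vee(\tilde N\delta))^{r-r'}$ exactly as the paper does, and write out the statistical-term computations that the paper simply cites from Remark \ref{rem:comments-num-error}; one cosmetic slip is that $\Psi^{-1}(N)=(\ln(1+(2\tilde N+1)^d))^{1/\beta}$ is bounded by, not equal to, $(d\ln(2\tilde N+1)+\ln 2)^{1/\beta}$.
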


\begin{proof}

Thanks to Lemma \ref{lem:estim:orlicz} and Proposition \ref{prop:Talagrand-maximal} we are allowed to apply Proposition \ref{prop:numericalerror} and we just have to study all the terms in the upper-bound. First, we have 
\begin{align*}
\sup_{x \in \bR^d \setminus \Box} \left(\frac{1+|x|^r}{\rho_{r'}(x)}\right) &= \sup_{x \in \bR^d \setminus \Box} \left(\frac{1+|x|^r}{1+|x|^{r'}}\right)   \leqslant \sup_{|x| \geqslant \tilde{N}\delta} \frac{1+|x|^r}{1+|x|^{r'}}= O\left(|\tilde{N}\delta|^{r-r'} \right).
\end{align*}
Moreover, when $\tilde{N}\delta <1$, we also have
$$\sup_{|x| \geqslant \tilde{N}\delta} \frac{1+|x|^r}{1+|x|^{r'}} \leqslant \sup_{y \in \mathbb{R}^+} \frac{1+y^r}{1+y^{r'}} \leqslant \sup_{y \in[0,1]} \frac{1+y^r}{1+y^{r'}}\leqslant 2.$$
Finally, we obtain the result using Remark \ref{rem:comments-num-error}.
\end{proof}

\subsection{Some schemes based on NNs approximation}

\label{subsection:NNnum}


\subsubsection{2nd scheme: contraction based scheme using NNs approximation}
\label{section:second_scheme}

We use once again the contraction property by considering a sequence of NNs $(\cU_{\zeta_n,n})_{n \in \bN}$, i.e. a sequence of functions $x \mapsto \left( \cU^1_{\zeta_n,n}(x), \cU^2_{\zeta_n,n}(x) \right) \in \bR^{d'} \times \bR^{d'\times d}$ parametrized by $\zeta_n \in \Theta_{n}$. For a given set of parameters $\Theta$, we denote $\cN_{\Theta}:= \{\cU_\zeta:\zeta\in \Theta \}$ the associated parametrized family of functions. We set $\cU_{\zeta_0,0}=0$. Knowing the NN $\cU_{\zeta_n,n}$, we train $\cU_{\zeta_{n+1},n+1}$ as follows:
\begin{itemize}
\item[1.] Sample $X_0 = X_{t=0} \sim \mu_0$, $E\sim \cE(\theta)$ and $\tE\sim \Gamma(1/2, \ttheta)$.
\item[2.] Get corresponding samples of the Malliavin weight $U^{X_0}_E$, $U^{X_0}_\tE$ and state processes $X^{X_0}_E$, $X^{X_0}_\tE$ at times $E$ and $\tE$ for the starting point $X_0$. 
\item[3.] Evaluate the previous NN $\cU_{\zeta_n,n}$ at each sample of $X^{X_0}_E$ and $X^{X_0}_\tE$ and compute the samples of $\varphi(\cU_{\zeta_n,n})(X_0)$ given by,
\begin{align}
\varphi(\cU_{\zeta_n,n})(X_0) = & \Bigg(\frac{1}{\theta } (f(X^{X_0}_E, \cU^1_{\zeta_n,n}(X^{X_0}_E), \cU_{\zeta_n,n}^2(X^{X_0}_E)) + a \cU^1_{\zeta_n,n}(X^{X_0}_E))e^{-(a-\theta)E} ,\\
& {\hspace{-1.5cm}} \sqrt{\frac{\pi}{{\ttheta} }}   \left(f(X^{X_0}_\tE, \cU^1_{\zeta_n,n}(X^{X_0}_\tE), \cU^2_{\zeta_n,n}(X^{X_0}_\tE)) + \ta \cU^1_{\zeta_n,n}(X^{X_0}_\tE)\right)\sqrt{\tE}e^{-(\ta-\ttheta)\tE} U^{X_0}_\tE \Bigg).
\end{align}
\item[4.] Minimize the loss function,
$$\cL(\cU_{\zeta, n+1}) := \E{\norm{\cU_{\zeta,n+1}(X_0) - \varphi(\cU_{\zeta_n,n})(X_0)}^2},\quad \zeta \in \Theta_{n+1}$$
and choose the parameters $\zeta_{n+1}$,
\begin{align}
\label{loss:function:theo}
\zeta_{n+1} \in \arg\min_{\zeta \in \Theta_{n+1}} \E{\norm{\cU_{\zeta,n+1}(X_0) - \varphi(\cU_{\zeta_n,n}(X_0)}^2}.
\end{align}
Let us remark that we have
\begin{align*}
\Exp{\norm{\cU_{\zeta,n+1}(X_0) - \varphi(\cU_{\zeta_n, n} )(X_0)}^2} = & \int_{\bR^d} \norm{\cU_{\zeta,n+1}(x) - \E{\varphi(\cU_{\zeta_n, n} )(x)}}^2 \mu_0(\dx)\\
& + \int_{\bR^d} \E{ \norm{ \E{\varphi(\cU_{\zeta_n, n} )(x)} - \varphi(\cU_{\zeta_n, n} )(x) }^2} \mu_0(\dx).
\end{align*}
Thus, we obtain
$\cU_{\zeta_{n+1},{n+1}} = \text{Proj}\left( \E{\varphi(\cU_{\zeta_n, n} )(.)}, \cN_{\Theta_{n+1}}\right)$
for the $L_{\mu_0}^2$ norm defined, for all measurable function $h: \bR^d \to \bR^k$ with $k \in \bN^*$, by 
$\norm{h}_{L^2_{\mu_0}} := \left(\int_{\bR^d} |h(x)|^2 \mu_0(\dx)\right)^{1/2}.$ In particular, if $\cN_{\Theta_{n+1}}$ is large enough, $\cU_{\zeta_{n+1},{n+1}}$ is close
to be equal to $\E{\varphi(\cU_{\zeta_n, n} )(.)}$ which is exactly what we are looking for.
\end{itemize}

The scheme can be summarized as follows.
\vspace{0.3cm}

\begin{breakablealgorithm}
\caption{Contraction based scheme using NN}\label{ch4:alg:contractionNN}
\begin{algorithmic}[1]

\noindent We start with an initial guess for the NNs for $(\cU^{1}_{\theta_0,0}, \cU^{2}_{\theta_0,0}) = (0, 0)$. At a given step $n+1$ of the Picard iteration, we sample a random starting point from the distribution $\mu_0$ and compute $\varphi(\cU_{\zeta_n,n})$ and we minimize the loss function $\cL(\cU_{\theta,n+1})$.
\vspace{0.2cm}
\State{}Initialize $\displaystyle(\cU^{1}_{\theta_0,0}, \cU^{2}_{\theta_0,0}) \gets (0, 0)$.
\For{$n$ from 1 to $N$} \Comment{Picard iterations}
    \State{}Obtain $M$ samples of $X_{t=0} \sim \mu_0$, $E\sim \cE(\theta)$ and $\tE\sim \Gamma(\frac{1}{2}, \ttheta)$.
    \State{}Obtain corresponding $M$ samples of $U_E^{X_0}$, $U_\tE^{X_0}$, $X_E^{X_0}$ and $X_\tE^{X_0}$.
    \State{}Define
    $$\varphi^1(\cU_{\zeta_n,n})(X_0) \gets \dfrac{1}{\theta } (f(X^{X_0}_E, \cU^1_{\zeta_n,n}(X^{X_0}_E), \cU_{\zeta_n,n}^2(X^{X_0}_E)) + a \cU^1_{\zeta_n,n}(X^{X_0}_E))e^{-(a-\theta)E}.$$
	\State{}Define
	$$\quad\ \ \varphi^2(\cU_{\zeta_n,n})(X_0) \gets \sqrt{\dfrac{\pi}{{\ttheta} }}   \left(f(X^{X_0}_\tE, \cU^1_{\zeta_n,n}(X^{X_0}_\tE), \cU^2_{\zeta_n,n}(X^{X_0}_\tE)) + \ta \cU^1_{\zeta_n,n}(X^{X_0}_\tE)\right)\sqrt{\tE}e^{-(\ta-\ttheta)\tE} U^{X_0}_\tE. $$
	\State{}Set $\displaystyle\varphi(\cU_{\zeta_n,n})(X_0) \gets \big(\varphi^1(\cU_{\zeta_n,n})(X_0),\, \varphi^2(\cU_{\zeta_n,n})(X_0)\big).$
	
    \State{}Train the NN $\cU_{\zeta, n+1}$ to minimize
$$\cL(\cU_{\zeta, n+1}) = \dfrac{1}{M}\sum_{k=1}^M \norm{\cU_{\zeta,n+1}(X^k_0) - \varphi(\cU_{\zeta_n,n})(X^k_0)}^2,\quad \zeta \in \Theta_{n+1}$$
\quad\ \ using ADAM optimizer.
    \State{}Update $\displaystyle\zeta_{n+1} \gets \arg\min_{\zeta \in \Theta_{n+1}} \cL(\cU_{\zeta, n+1}).$
\EndFor

\end{algorithmic}
\end{breakablealgorithm}

\vspace{0.3cm}

Now we want to study the error 
 \begin{align*}
  e_n:=\left(\int_{\bR^d} \norm{ \cU_{\zeta_{n},{n}}(x)-v(x)}^2 \mu_0(dx)\right)^{1/2}, \quad n \in \bN. 
 \end{align*}
Let us introduce $r_{n} := \E{\norm{\cU_{\zeta_{n},{n}}(X_0) - \mathbb{E}_{X_0}\left[\varphi(\cU_{\zeta_{n-1},{n-1}})(X_0)\right]}^2}^{1/2}$, for $n \in \bN$ and where $\mathbb{E}_{X_0}[.]$ denotes the expectation conditionally to $X_0$. $r_n$ is the accuracy of the NN approximation at the $n$-th Picard iteration. 
We have the following result for the convergence of this scheme to the solution of the BSDE \eqref{eq:BSDEinfinitehorizon}.
\begin{Proposition}\label{ch4:prop:contractionNN}
We assume assumptions of Proposition \ref{pr:Lpestimate} for $p=2$ and $\meas = \mu_0$. Let $\varepsilon>0$ be such that $(1+\varepsilon)(\kappa_2)^2<1$ where $\kappa_2$ is defined in Proposition \ref{pr:Lpestimate}. We also assume that we can solve exactly at each Picard step the optimization problem associated with the theoretical loss function \eqref{loss:function:theo}. Then, the numerical approximation error of the scheme at $n$-th Picard iteration is bounded by
$$e_{n} \leqslant \frac{\sup_{1 \leqslant k \leqslant n}r_k}{1-\kappa_2} + \kappa_2^{n} e_0.$$
\end{Proposition}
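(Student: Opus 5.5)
The plan is a one-step error recursion followed by unrolling. I write $\bar\Phi := \Phi$ as in \eqref{ch4:phi}. The key observation is that $\mathbb{E}_{X_0}\left[\varphi(\cU_{\zeta_{n-1},n-1})(X_0)\right] = \Phi(\cU_{\zeta_{n-1},n-1})(X_0)$, because $E$, $\tE$ and $W$ are independent of $X_0$. Therefore
$$r_n = \Big(\int_{\bR^d} \norm{\cU_{\zeta_n,n}(x) - \Phi(\cU_{\zeta_{n-1},n-1})(x)}^2 \mu_0(\dx)\Big)^{1/2},$$
which is the $L^2_{\mu_0}$ distance between the trained network and the exact Picard image of the previous network.

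For the recursion, I use the fixed point identity $v = \Phi(v)$ from Proposition \ref{prop:representationBSDEinfinitehorizon}. The triangle inequality in $L^2_{\mu_0}$ gives
$$e_n \leqslant \norm{\cU_{\zeta_n,n} - \Phi(\cU_{\zeta_{n-1},n-1})}_{L^2_{\mu_0}} + \norm{\Phi(\cU_{\zeta_{n-1},n-1}) - \Phi(v)}_{L^2_{\mu_0}} \leqslant r_n + \kappa_2 e_{n-1}.$$
The second bound is Proposition \ref{pr:Lpestimate} with $p=2$ and $\meas = \mu_0$, applied to $w_1 = \cU_{\zeta_{n-1},n-1}$ and $w_2 = v$.

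That application needs both functions to be measurable with polynomial growth $r$. For $v$ this follows from \eqref{prop:representationBSDEinfinitehorizon:growth}. For the networks it is the main technical point to check. Networks with Lipschitz activations have at most linear growth, which is fine if $r \geqslant 1$. In general I would state this as an implicit requirement on the class $\cN_{\Theta_n}$, for instance bounded outputs or outputs truncated as in the grid scheme. I would also check that the finiteness conditions in \eqref{eq:condition:stability:rho} hold, which is what makes all the $L^2_{\mu_0}$ norms finite.

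Unrolling the recursion gives
$$e_n \leqslant \sum_{k=0}^{n-1} \kappa_2^k r_{n-k} + \kappa_2^n e_0 \leqslant \frac{\sup_{1\leqslant k\leqslant n} r_k}{1-\kappa_2} + \kappa_2^n e_0,$$
using $\kappa_2 < 1$. That $\kappa_2<1$ follows from the hypothesis $(1+\varepsilon)\kappa_2^2 < 1$.

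The $\varepsilon$ in the statement is then not needed. It would only enter if one worked with squared errors, using $(x+y)^2 \leqslant (1+\varepsilon)x^2 + (1+\varepsilon^{-1})y^2$, which gives the recursion $e_n^2 \leqslant (1+\varepsilon)\kappa_2^2 e_{n-1}^2 + (1+\varepsilon^{-1}) r_n^2$. The direct triangle inequality already yields the stated bound, so I would present that route and note the squared-error version as an alternative.
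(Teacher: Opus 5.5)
Your proposal is correct and is essentially the paper's proof. The triangle inequality in $L^2_{\mu_0}$ combined with Proposition~\ref{pr:Lpestimate} for $p=2$ gives $e_{n+1}\leqslant r_{n+1}+\kappa_2 e_n$, and this recursion is then unrolled using $\kappa_2<1$; the paper likewise never uses $\varepsilon$, and it also relies on $\cU_{\zeta_n,n}$ being deterministic so that $\mathbb{E}_{X_0}[\varphi(\cU_{\zeta_n,n})(X_0)]=\Phi(\cU_{\zeta_n,n})(X_0)$. Your remark that the networks need polynomial growth $r$ for Proposition~\ref{pr:Lpestimate} to apply is a legitimate point that the paper's proof does not address.
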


\begin{proof}

We denote by $\cN_{\Theta}:=\{\cU_\zeta,\zeta \in \Theta \}$ the set of functions $\bR^d \rightarrow \bR^{d'}\times \bR^{d' \times d}$ that can be mimicked by a NN with parameters in $\Theta$.  

We assume that $\kappa_{2} < 1$. Then Corollary \ref{cor:contraction} gives us the existence and uniqueness of a solution $v$ to \eqref{prop:eq:u} and \eqref{prop:eq:baru}. We also assume that, $\zeta \in \Theta_n$ is numerically computable for the loss function \eqref{loss:function:theo}, i.e. we ignore numerical optimisation errors. In particular, $\cU_{\zeta_n,n}$ is deterministic for all $n \in \bN$.
 We have
\begin{align*}
e_{n+1} \leqslant & \E{\norm{\cU_{\zeta_{n+1},{n+1}}(X_0) - \mathbb{E}_{X_0}\left[\varphi(\cU_{\zeta_{n},{n}})(X_0)\right]}^2}^{1/2} + \E{\norm{\mathbb{E}_{X_0}\left[\varphi(\cU_{\zeta_{n},{n}})(X_0)\right] - v(X_0)}^2}^{1/2}\\
\leqslant & r_{n+1} + \E{\norm{\Phi(\cU_{\zeta_{n},{n}})(X_0) - \Phi(v)(X_0)}^2}^{1/2}\leqslant  r_{n+1} + \kappa_2 e_n,
\end{align*}
which easily implies the result. 
\end{proof} 
{
This convergence result is purely theoretical, for two reasons. First, there is no guarantee that the numerical optimization converges to a global minimum. Second, even assuming it did, the approximation error is not accounted for. On the theoretical side, it is known that at each step one can find a $\Theta_n$ large enough to make $r_{n}$ arbitrarily small (see for instance \cite[Theorem 1]{hornik1991approximation} and \cite{leshno1993multilayer}) but in practice it remains unclear how to specify the architecture of the neural network so as to ensure that  $r_{n}$ falls below a given threshold. Finally, this scheme shares the same drawback as the grid-based scheme: it requires a contraction condition for the Picard iterations to converge. The next NN scheme is designed to overcome this limitation.
}

\subsubsection{{3rd scheme:} direct scheme using NNs approximation}
\label{section:third_scheme}
Here we propose a scheme using a NN approximation that does not rely on a contraction property. 
We train a NN $(\cU_\zeta, \zeta \in \Theta)$, in $I$ epochs, each with $\cP$ gradient descent steps, whereby the NN is optimized for a given set of samples before we update the input samples in the next epoch. 
The proposed scheme is defined as follows.
\vspace{0.3cm}

\begin{breakablealgorithm}
\caption{Direct scheme using NN}\label{ch4:alg:directNN}
\begin{algorithmic}[1]

\State{}  \noindent Initialisation of $\zeta_0^0$
\For{$i$ from $1$ to $I$} 	\Comment{Iterating over epochs}
    \State{}Obtain $M_x$ samples of $X_{t=0} \sim \mu_0$.
    \State{}For each sample of $X_0$, obtain $M$ samples of $\big(E, \tE, X_E^{X_0}, X_\tE^{X_0}, U_\tE^{X_0} \big)$ where 
    $${E}\sim \cE({\theta}) \text{ and } \tE\sim \Gamma(0.5, \ttheta).$$
	\State{}Set $\zeta^{i}_{n=0} \gets \zeta^{i-1}_{n=\cP}$.	\Comment{Using the NN from the end of the previous epoch}     
    
	\For{$n$ from $0$ to $\cP-1$}		\Comment{Gradient descent  steps}
    	\State{}Define for each $k=1, \dots, M_x$ and $j=1, 2, \dots, M$,
    $$\varphi^{1,j}(\cU_{\zeta_{n}^i})(X^k_0) \gets \dfrac{1}{\theta } (f(X^{X^k_0}_{E^j}, \cU^1_{\zeta_{n}^i}(X^{X^k_0}_{E^j}), \cU_{\zeta_{n}^i}^2(X^{X^k_0}_{E^j})) + a \cU^1_{\zeta_{n}^i}(X^{X^k_0}_{E^j}))e^{-(a-\theta){E^j}}.$$
		\State{}Define for each $k=1, \dots, M_x$ and $j=1, 2, \dots, M$,
	$$\varphi^{2,j}(\cU_{\zeta_{n}^i})(X^k_0) \gets \sqrt{\dfrac{\pi}{{\ttheta} }}   \left(f(X^{X^k_0}_{\tE^j}, \cU^1_{\zeta_{n}^i}(X^{X^k_0}_{\tE^j}), \cU^2_{\zeta_{n}^i}(X^{X^k_0}_{\tE^j})) + \ta \cU^1_{\zeta_{n}^i}(X^{X^k_0}_{\tE^j})\right)\sqrt{\tE^j}e^{-(\ta-\ttheta)\tE^j} U^{X^k_0, j}_{\tE^j}. $$
		\State{}Set $\displaystyle\varphi^j(\cU_{\zeta_{n}^i})(X^k_0) \gets \big(\varphi^{1,j}(\cU_{\zeta_{n}^i})(X^k_0),\, \varphi^{2,j}(\cU_{\zeta_{n}^i})(X^k_0)\big)$ $\forall j=1, \dots, M,\ \forall k= 1, \dots, M_x$.
		\State{}Given $\zeta_{n}^i$, we get {$\zeta_{n+1}^{i}$} using one ADAM optimization step for the loss function,
		$$\Upsilon_{M, M_x}(\cU_{\zeta}) :=  \dfrac{1}{M_x}\sum_{k=1}^{M_x} \norm{\cU_\zeta(X^k_0) - \frac{1}{M}\sum_{j=1}^M \varphi^j({\cU_{\zeta^i_0}})(X^k_0)}^2.$$
	\EndFor
\EndFor

\end{algorithmic}
\end{breakablealgorithm}

The convergence study of this third scheme is not tackled here and is left to some future works.

\vspace{0.3cm}

\section{Numerical experiments}
\label{ch4:numexperiments}
In this section, we provide some numerical experiments \footnote{The numerical experiments are available as Python code on the following GitHub link - \href{https://github.com/CharuShardul/Inf_BSDE_solvers}{https://github.com/CharuShardul/Inf\_BSDE\_solvers}.} for the three schemes described above. In general, for the state process $\Xx_t$, we consider only the Brownian motion $W^x_t$ starting from $x\in \bR^d$, i.e., $b(x) = 0$ and $\sigma(x) =\bI_d$ ($d$-dimensional identity matrix) in \eqref{eq:SDE:intro}. However, the numerical experiments could be extended to a general SDE with non-constant $b$ and $\sigma$, which we illustrate for the grid based contraction scheme by taking $\sigma$ as a function of the state process and $b=0$ without loss of generality.

We design examples with analytical solutions in the following way: We start with a given function $u(x)\in C^2(\bR^d,\bR^{d'})$ and construct the generator $f$ such that the corresponding BSDE is satisfied by $\Yx_t = u(X_t^x)$ and $\Zx_t = \nabla_x u(X_t^x)\sigma(X^x_t)$. This can be done by choosing a suitable function $f_0(x,y,z)$ that satisfies  \eqref{as:generatorf}, in particular, with $\mu$ as the $y$-monotonicity constant and $K_{f,z}$ as the $z$-Lipschitz continuity constant. Then, define $f$ as follows:
\begin{align*}
    f(x, y, z) = f_0(x,y,z) - \dfrac{1}{2}\text{Tr}\big(\nabla_x^2 u(x)\sigma(x)\sigma(x)^\top\big) - f_0(x, u(x), \nabla_x u(x)\sigma(x)) 
\end{align*}
so that $u$ satisfies the PDE \eqref{eq:PDEellipticRd} which is equivalent to the BSDE \eqref{eq:BSDEinfinitehorizon}. Here $\nabla_x^2$ denotes the Hessian operator.

\subsection{First Scheme - Grid based contraction scheme}

\subsubsection{Constant $\sigma$ with $X_t^x = x + W_t$}
We use the following example with $d'=1$ for all three numerical schemes:
\begin{align}\label{eq:numexp_constsig}
    u(x) &= \dfrac{1}{d}\sum_{i=1}^d \tan^{-1}(x_i)\\
    \Bar{u}(x) &= \dfrac{1}{d}\left(\dfrac{1}{1+x_1^2},\ \dots \ ,\dfrac{1}{1+x_d^2} \right)\\
    f_0(x, y, z) &= -cy + \cos{(y+|x|)} + K_z \sin(|z|)\\
    f(x, y, z) &= f_0(x, y, z) + \dfrac{1}{d}\sum_{i=1}^d \dfrac{x_i}{(1+x_i^2)^2} - f_0(x, u(x), \Bar{u}(x)). 
\end{align}
In this subsection, we take $d=1$ and the Malliavin weight can be calculated as per \eqref{eq:malliavin:weight}, which gives $\Ux_t = \frac{W_t^x}{t}$. We provide some numerical experiments for the grid-based contraction scheme defined in Definition \ref{def:scheme}. We have not used the truncation operator $T_{B,{\rho}_r}$ as, from an empirical point of view, it was not necessary to obtain a numerically stable solution and it had no impact on accuracy. In addition, we have taken the following values for the parameters: $c = 2$ and $K_z = 0.5$ with $a = 2$, $\ta = 2$, $\theta = 1.5$ and $\ttheta = 1.5$ being the parameters defined in Proposition \ref{prop:representationBSDEinfinitehorizon}. {Unless otherwise specified: $NbP = 10$, $\tilde N = 10$, $M=40000$ and $R=3$, where $NbP$ is the number of Picard iterations, $(2\tilde{N} + 1)^d$ is the number of points in the $d$-dimensional grid $\Pi$ centered at $0$, $M$ is the number of samples and $2R$ is the side length of the $d$-dimensional hypercube spanned by $\Pi$, with $\delta=\frac{R}{\tilde{N}}$ denoting the mesh size.} 

Since we use a grid approximation for this scheme with a finite grid over the domain, it is natural to expect this choice to affect the errors, especially as we get closer to the boundary. Hence, we introduce a boundary truncation parameter $p$ and compute the solution on a larger grid with $(2(\tilde N + p) + 1)^d$ points and discard the $(2(\tilde N + p) + 1)^d$ - $(2\tilde N + 1)^d$ points close to the boundary and we denote the resulting truncated domain by $\Pi_p$.

We denote the deviation of the solution at the $n$-th Picard iteration from the analytical solution as follows:
\begin{align}
\Delta u^n_{\tilde{N}, p}(x) &:= |u^n_{\tilde{N}, p}(x) - u(x)|,\quad x \in \Pi_p,\\
\Delta \bar{u}^n_{\tilde{N}, p}(x) &:= |\bar{u}^n_{\tilde{N}, p}(x) - \bar{u}(x)|,\quad x \in \Pi_p.
\end{align}
Wherever clear from the context, we will shorten the notation to $\Delta u^n_p$ for instance. The effect of boundary truncation on the overall errors and errors close to the boundary is illustrated in Figure \ref{ch4:fig:box_plot_grid_r} where we plot the errors $\Delta u^n_p(x)$ and $\Delta \bar u^n_p(x)$ on the grid for different values of $p$. 
By standard convention, the inter-quantile range (IQR) for the box plot is between $Q_1 = q(0.25)$ to $Q_3 = q(0.75)$ quantiles and the whiskers are at $Q_1-1.5 IQR$ and $Q_3 + 1.5 IQR$ with the hollow circles being the outliers. 
When the grid of the starting points for $\Xx_t$ is larger and the boundary is truncated, we get smaller overall errors, especially close to the boundary. 

Next, in Figure \ref{ch4:fig:max_log_err_grid}, we observe the dependence of the maximum logarithmic errors on the Picard iterations for different values of the boundary truncation parameter $p$. We note that the errors stagnate beyond $p=2$ and $n = 7$.

Next, we test Theorem \ref{thm:errornum:infinitehorizon} using our numerical example which has a constant $\sigma$. For simplicity, we have not used a truncation function $\rho_{r'}$ in our example because we have taken the functions $(u, \bar u)$ which are asymptotically constant outside a finite boundary. Instead, we consider the $L_\infty$ norm for the errors of $v^n = (u^n, \bar{u}^n)$ on the grid. We fix the number of Picard iterations to $10$ and only consider the impact of the mesh size and the statistical error, as the study of the error contributions of the Picard iterations is trivial. As we can see from Theorem \ref{thm:errornum:infinitehorizon}, these errors depend on $\tilde N$, $M$ and $\delta$ while we keep the size of the domain fixed by taking constant $R$. The function $v$ has a growth $C(1+|x|^r)$ (see equation \eqref{prop:representationBSDEinfinitehorizon:growth}) where $r=0$ in our case with $X_t^x = x + W_t$. With the constant $\sigma$ case for Theorem \ref{thm:errornum:infinitehorizon} without the $\rho_{r'}$ function, we can write 
\begin{equation}
    \mathbb E\left[ \sup_{x\in\bR^d} |Pv^n(x) - v(x)| \right] \leq C \left( \frac{R}{\tilde{N}^2}^2 +  \frac{(\ln (2+ \tilde{N}))^{1/2}}{\sqrt{M}} + (1+\varepsilon)^n\kappa_{\infty}^n \right),
\end{equation}

where $\kappa_\infty<1$ is the contraction constant, $R = \tilde N \delta$ and for practical purposes we can assume that the number of Picard iterations $n$ is large enough to ignore the third term and that $(\ln (2+\tilde{N}))^{1/2}$ can be taken as a constant for the range of $\tilde N$ where we will plot the errors. Next, to keep the first two error contributions comparable in size, $M$ should be $M=k\frac{\tilde N^4}{R^4}$ for any constant $k$ ($k=200$ in our experiment). Hence, we expect the errors to have an inverse square dependence on $\tilde N$,
$$\mathbb E\left[\sup_{x\in\bR^d} |Pv^n(x) - v(x)|\right] \propto \frac{1}{\tilde N^2}.$$


In Figure \ref{ch4:fig:erruntilde} we plot the supremum of the log errors against $\log (2+\tilde N)$ for the $10$-th Picard iteration over $10$ experiments. We also observe a slope close to the theoretical value of $-2$ for the best fit line, omitting the last two values of $\tilde N$ because for larger $\tilde{N}$, the overall error becomes so small that other sources of error have noticeable contributions.

This grid-based contraction scheme works well for a low number of dimensions but suffers from the curse of dimensionality, as the time complexity in the case of a grid of size $2\tilde{N}+1$ in each dimension is of order $O((2\tilde{N}+1)^d)$ where $d$ is the dimension of the process $X$ (Brownian motion in this case). This motivates us to consider schemes based on NNs, which we illustrate in later sections.

\subsubsection{Non-constant $\sigma$ with $X_t^x = x + \int_0^t \sigma(X^x_s) \dd W_s$}

In this subsection, we extend the grid scheme to a more general SDE with non-constant drift and volatility. To simplify matters, we will limit ourselves to the case of a null drift. With a non-constant $\sigma$, the corresponding system of equations is the following
\begin{align}\label{eq:numexp_nonconstsig}
    u(x) &= \dfrac{1}{d}\sum_{i=1}^d \tan^{-1}(x_i)\\
    \bar{u}(x) &= \nabla_x u(x)\sigma(x) =  \dfrac{1}{d}\left(\dfrac{1}{1+x_1^2},\ \dots \ ,\dfrac{1}{1+x_d^2} \right) \sigma(x)\\
    \nabla_x^2 u(x) &= \dfrac{-2}{d} \begin{pmatrix}
        \dfrac{x_1}{(1+x_1^2)^2}  & 0 & \dots & 0\\
        0 & \dfrac{x_2}{(1+x_2^2)^2} & \dots & 0 \\
        0 & 0 & \ddots & 0 \\
        0 & 0 & \dots & \dfrac{x_d}{(1+x_d^2)^2}
        \end{pmatrix}\\
    f_0(x, y, z) &= -cy + \cos{(y+|x|)} + K_z \sin(|z|)\\
    f(x, y, z) &= f_0(x,y,z) - \dfrac{1}{2}\text{Tr}\big(\nabla_x^2 u(x)\sigma(x)\sigma(x)^\top\big) - f_0(x, u(x), \nabla_x u(x)\sigma(x)).
\end{align}

Next, we take $d=1$ and $\sigma(x) = 1 + \varepsilon \tanh(x)$, with $\varepsilon = 0.9$. Note that $\varepsilon$ needs to be smaller than $1$ so that $\sigma$ is non-zero and satisfies the assumption \eqref{as:b:sigma}. In Figure \ref{ch4:fig:genSDE}, we plot various Picard iterations of the approximation against the analytical solution for $(u, \bar{u})$ and take $R = 4$, $r=3$, $M=8000$, $NbP = 10$, $\Delta t = 0.003$ and $K_z = 0.1$. 


In order to compute the Malliavin weights \eqref{eq:malliavin:weight}, we need to simulate the paths of both $X_t^x$ and $\nabla_x X_t^x$, whose dynamics can be shown to be
$$d(\nabla_x X_t^x) = \nabla_x X_t^x (1 - \tanh^2(X_t^x))dW_t,\quad \nabla_x X_0^x = 1.$$
Next, we proceed as follows: First, we fix a time step $\Delta t$ and obtain $M$ samples of $(E, \tilde E)$ for each point $x$ on the grid, which serve as the samples of the terminal time; Then, to vectorize the computations, we sample the paths of both processes $(X_t^x, \nabla_x X_t^x)$ until the maximum of all samples of $(E, \tilde E)$ for each $x$ and approximate their values at $E$ and $\tilde E$ using the values of $(X_t^x, \nabla_x X_t^x)$ at the discretized points on the time grid; Finally, we can proceed using the same approach we followed for constant $\sigma$.

Note that time discretization would introduce another source of error in our theoretical study of the errors in Theorem \ref{thm:errornum:infinitehorizon} whose impact could be studied, but we will not do it here.

\begin{figure}[htbp]
    \centering
    \includegraphics[scale=0.6]{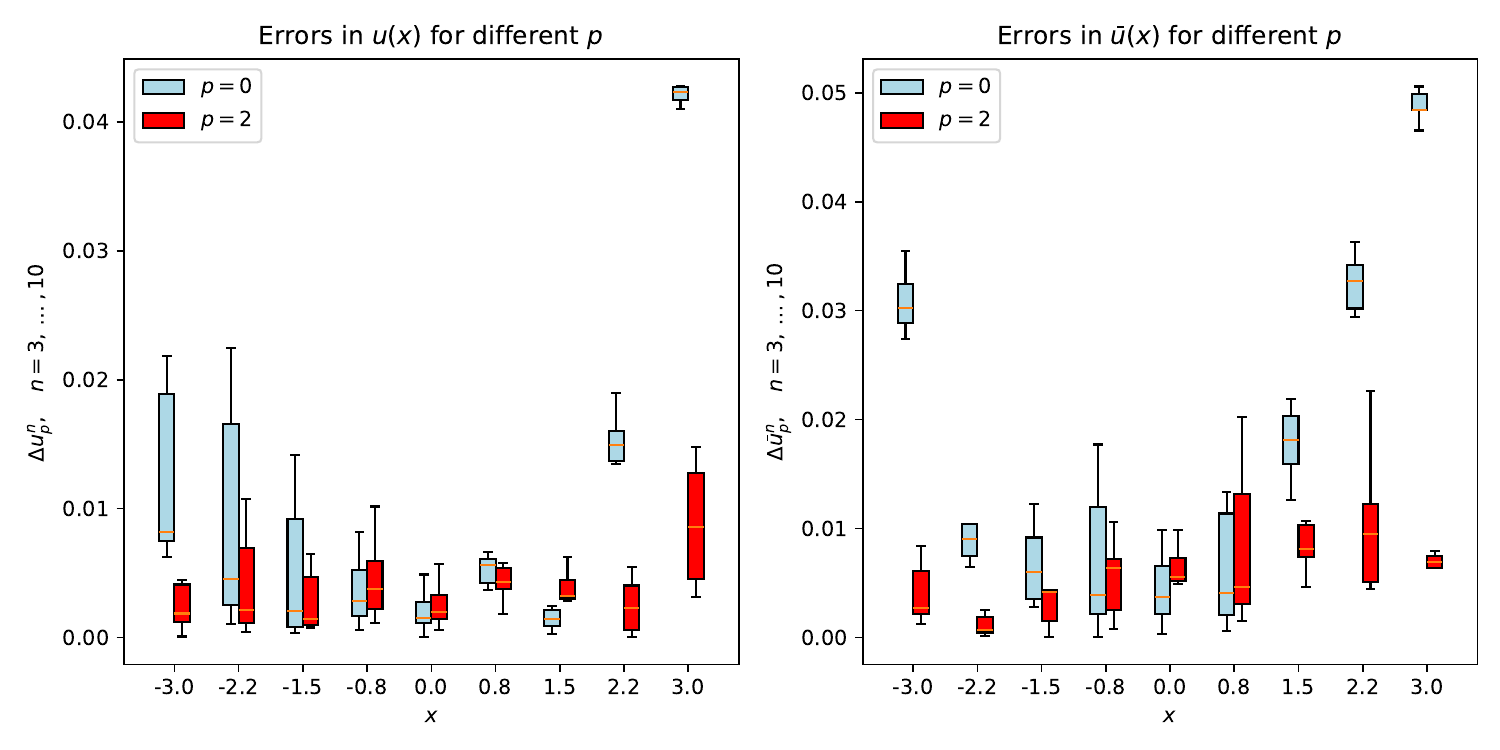}
    \caption{Box plot for errors in $u(x)$ and $\bar{u}(x)$ for $p=0$ and $2$ for Picard iterations $n=3,\dots,10$ plotted against the grid points.}\label{ch4:fig:box_plot_grid_r}
\end{figure}

\begin{figure}[htbp]
    \centering
	\includegraphics[scale=0.6]{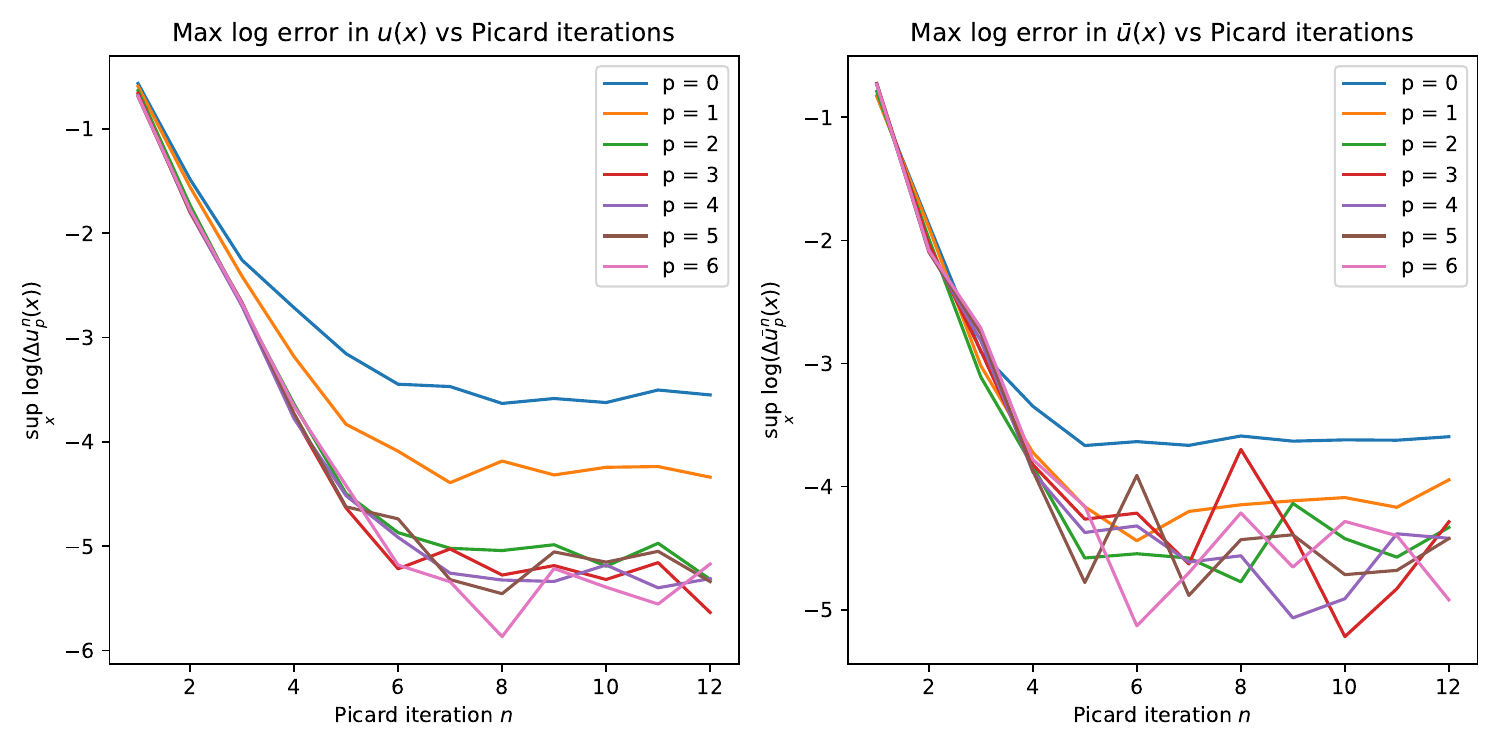}
    \caption{Plot of the max log errors on the grid against Picard iterations for different values of boundary truncation p.}
    \label{ch4:fig:max_log_err_grid}
\end{figure}

\begin{figure}[htbp]
    \centering
    \includegraphics[scale=0.62]{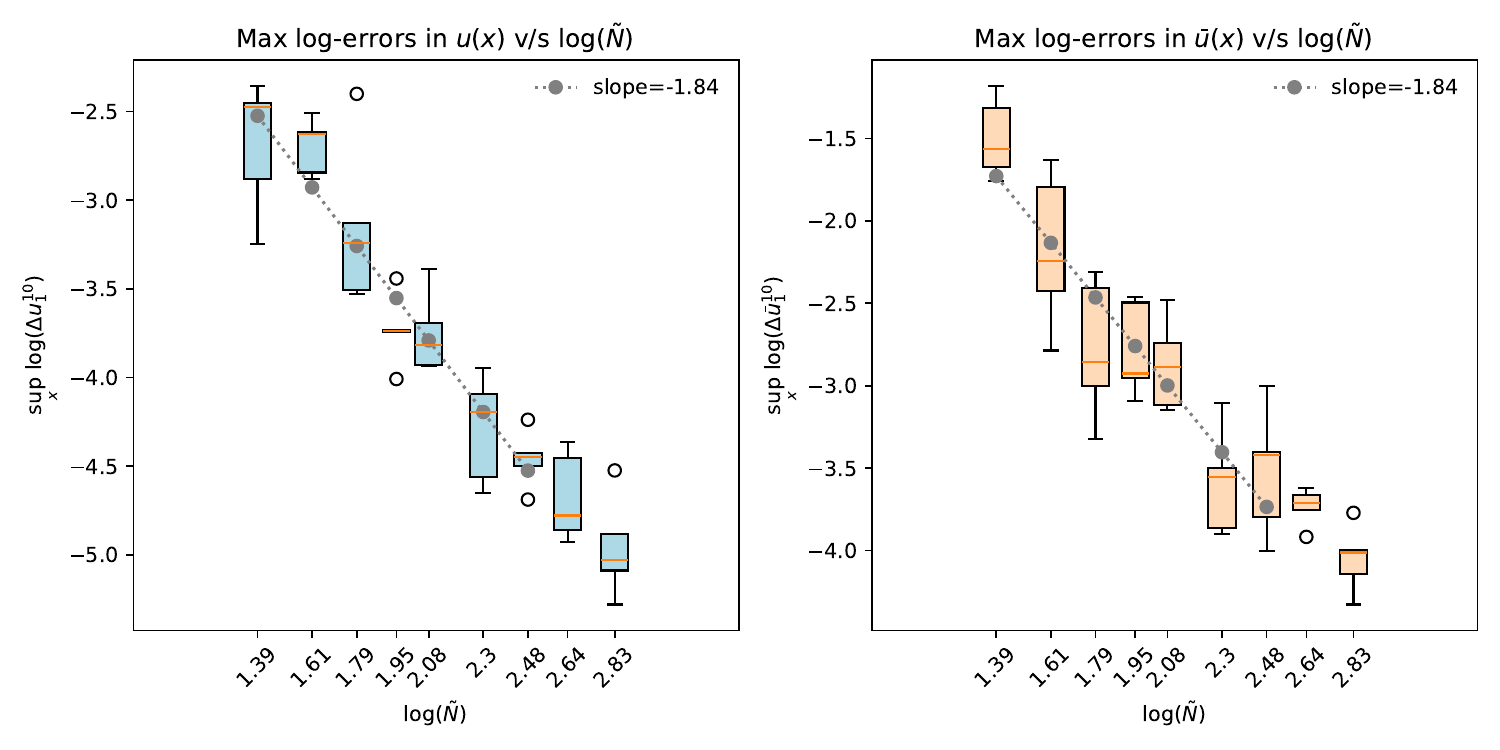}
    \caption{Box plot for the supremum of the log errors of $u(x)$ and $\bar{u}(x)$ over the grid for $p=2$ and different $\tilde N$, plotted for $10$ experiments.}\label{ch4:fig:erruntilde}
\end{figure}

\begin{figure}[htbp]
    \centering
    \includegraphics[scale=0.57]{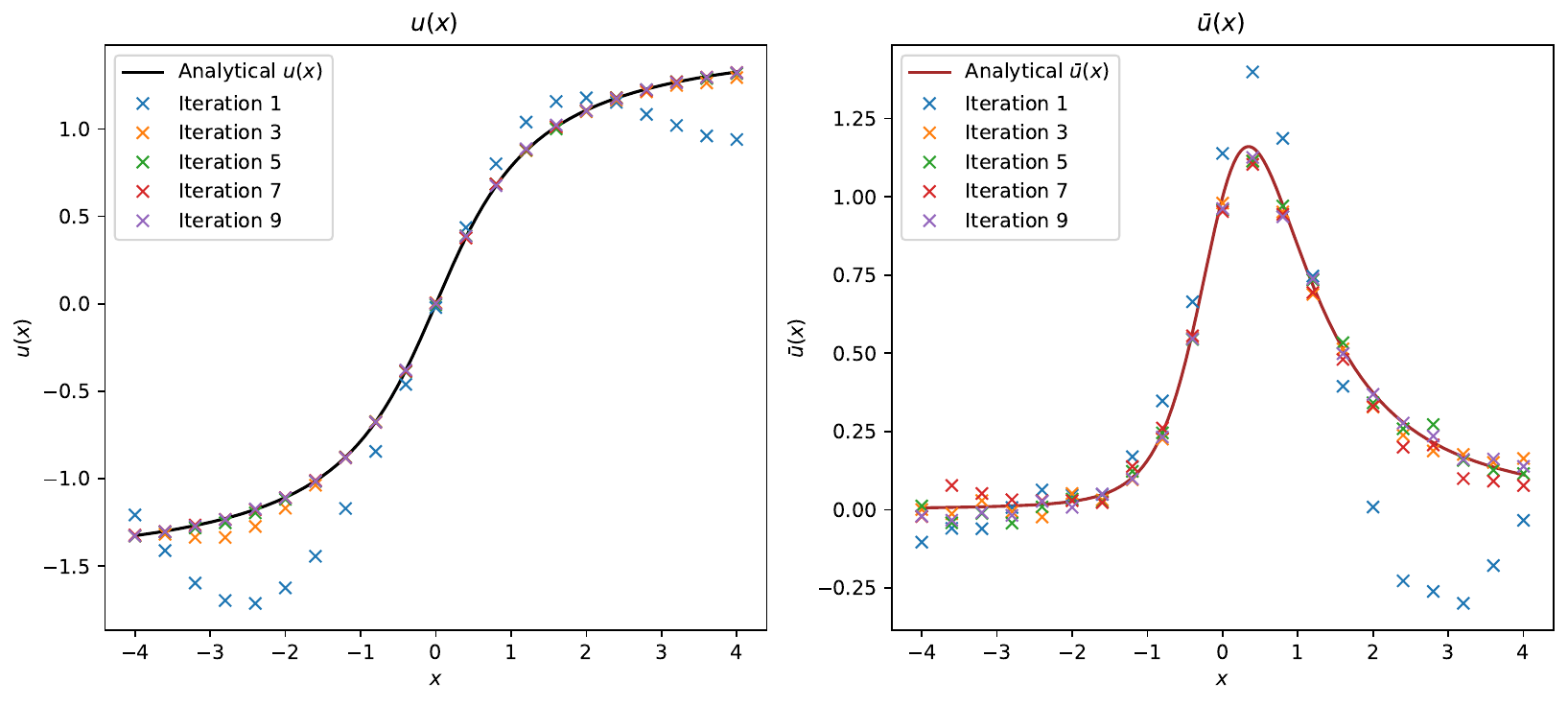}
    \caption{Picard iterations for the general SDE with $\sigma(x) = 1 + \varepsilon \tanh(x)$, $\varepsilon = 0.9$}
    \label{ch4:fig:genSDE}
\end{figure}


\subsection{Second scheme - Contraction based NN scheme}

The second scheme also uses contraction, but relies on NNs for the approximation procedure (see Algorithm \ref{ch4:alg:contractionNN}). We have used the following values for the parameters: $c = 2$, $K_z = 0.1$, $a = 2$, $\ta = 2$, $\theta = 1.5$ and $\ttheta = 1.5$ unless specified otherwise. For the distribution of the starting point $X_0$, we use a normal distribution $\cN(0, {\sigma^2 \bI_{d}})$ with $\sigma = 2.0$ since we obtain the plots within the domain $[-3, 3]$ for each dimension and $\sigma=2$ gives us a good balance of coverage of the domain and ensures that enough samples are outside this domain so that the errors close to the observation boundary are small. We use a fully connected NN with two hidden layers, each with $20 + d$ nodes and we use ReLu activation function. The initial learning rate is $5\times 10^{-4}$ which decays by a factor of $0.9$ after $1000$ steps. In Figure \ref{ch4:fig:NN_Pic_iter}, we provide a plot of the solution for the first five Picard iterations for the one-dimensional case, i.e. $d=1$ with the sample size $M = 30000$.


In Figure \ref{ch4:fig:NN_Pic_iter_2d}, we plot the odd numbered Picard iterations from $n=1$ to $n=7$ for the solution corresponding to $d=2$ and have used a sample size of $M=40000$ at each iteration. Compared to the one-dimensional case, we need a larger sample size and more Picard iterations for comparable errors which is to be expected. 




To study the effect of $d$ on the errors, we used $M=40000$ and obtained $M_{err} = 1000$ test samples with starting points $X_0$ sampled from $\cN(0, \sigma^2_{err}{\bI_{d}})$ and plotted the errors for different values of $d$, as illustrated in Figure \ref{ch4:fig:box_err}. We took $\sigma_{err} = 2.0$ which is the same as the one used for training. If we use a smaller $\sigma_{err}$ for testing, we observe smaller errors. This effect is similar to the truncation of boundary illustrated for the grid based scheme. The $L^2_{\mu_0}$-norm of the errors is approximated as follows:
$$\norm{u_d^n - u}_{L^2_{\mu_0}} \approx \left(\frac{1}{M_{err}}\sum_{i=1}^{M_{err}} |u^n(x_i)-u(x_i)|^2\right)^{1/2}$$
and
$$\norm{\bar{u}_d^n - \bar{u}}_{L^2_{\mu_0}} \approx \left(\frac{1}{M_{err}}\sum_{i=1}^{M_{err}} \sum_{k=1}^d |\bar{u}^{n,k}(x_i)-\bar{u}^k(x_i)|^2\right)^{1/2}.$$

Next, using the expressions for the analytical solutions, we can show that,
$$\norm{u}_{L^2_{\mu_0}} =  \left(\int_{\bR^d} |u(x)|^2\mu_0(\dx)\right)^{1/2} = \frac{1}{\sqrt{d}} \left( \int_\bR |\tan^{-1}(x)|^2\frac{1}{\sqrt{2\pi\sigma_{err}^2}}e^{-\frac{x^2}{2\sigma_{err}^2}}\dx\right)^{1/2},$$
$$\norm{\bar{u}}_{L^2_{\mu_0}} =  \left(\int_{\bR^d} |\bar{u}(x)|^2 \mu_0(\dx)\right)^{1/2} = \frac{1}{\sqrt{d}} \left( \int_\bR \left(\frac{1}{1+x^2}\right)^2 \frac{1}{\sqrt{2\pi\sigma_{err}^2}}e^{-\frac{x^2}{2\sigma_{err}^2}}\dx\right)^{1/2}, $$
implying $1/\sqrt{d}$ dependence on $d$ for the norm of the solution. Hence, we consider the relative $L^2_{\mu_0}$ errors in this experiment, defined as follows,
\begin{equation}\label{eq:rel_L2norm_err}
\Delta u^n_d := \frac{\norm{u_d^n - u}_{L^2_{\mu_0}}}{\norm{u}_{L^2_{\mu_0}}}\quad \text{and} \quad \Delta \bar{u}^n_d := \frac{\norm{\bar{u}_d^n - \bar{u}}_{L^2_{\mu_0}}}{\norm{\bar{u}}_{L^2_{\mu_0}}}.
\end{equation}


Next, we take $d=1$ and investigate the effect of $K_z$ on the $L^2_{\mu_0}$ norm of the errors, denoted by $\Delta u^n_{K_z}$ and $\Delta \bar{u}^n_{K_z}$. See Remark \ref{ch4:rem:contraction} to see how increasing $K_z$ can deny the contraction of the scheme, which is why we have taken small values of $K_z$ so far. 
{Theoretically speaking, the existence and uniqueness of the solution is guaranteed for all $K_z \geq 0$ since $d'=1$ and $r=0$, see Proposition \ref{prop:existence:uniqueness:BSDE}.}
In Figure \ref{ch4:fig:box_err_Kz}, we plot the logarithmic errors for different values of $K_z$ to investigate the breaking point of the scheme for values of $K_z$ close to the theoretical limit, where we have taken $M=30000$. We observe that from $K_z=1.6$ and onward, the errors increase significantly.

\begin{figure}
    \centering
	\includegraphics[scale=0.56]{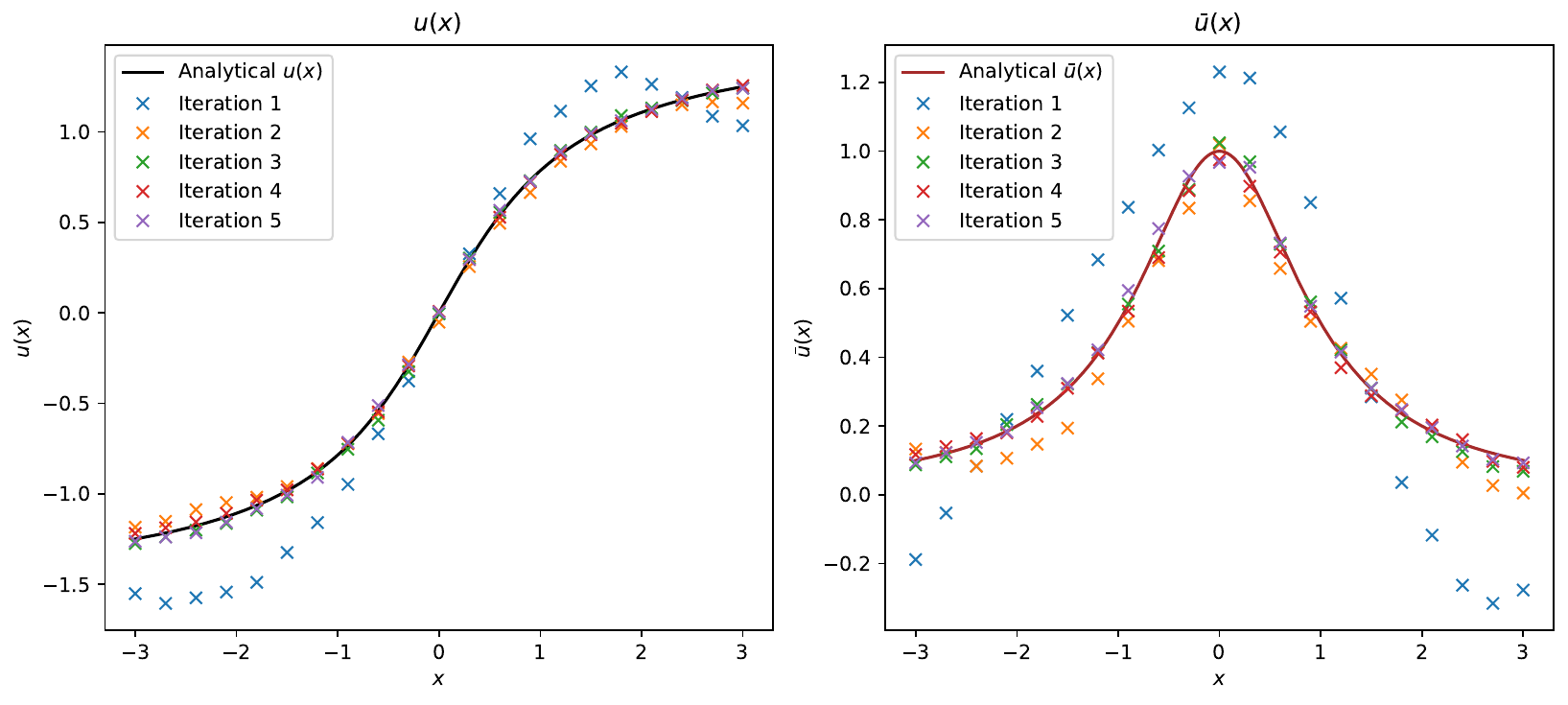}
    \caption{Picard iterations for Algorithm \ref{ch4:alg:contractionNN} for $d=1$.}
    \label{ch4:fig:NN_Pic_iter}
\end{figure}

\begin{figure}
\centering
\includegraphics[scale=0.61]{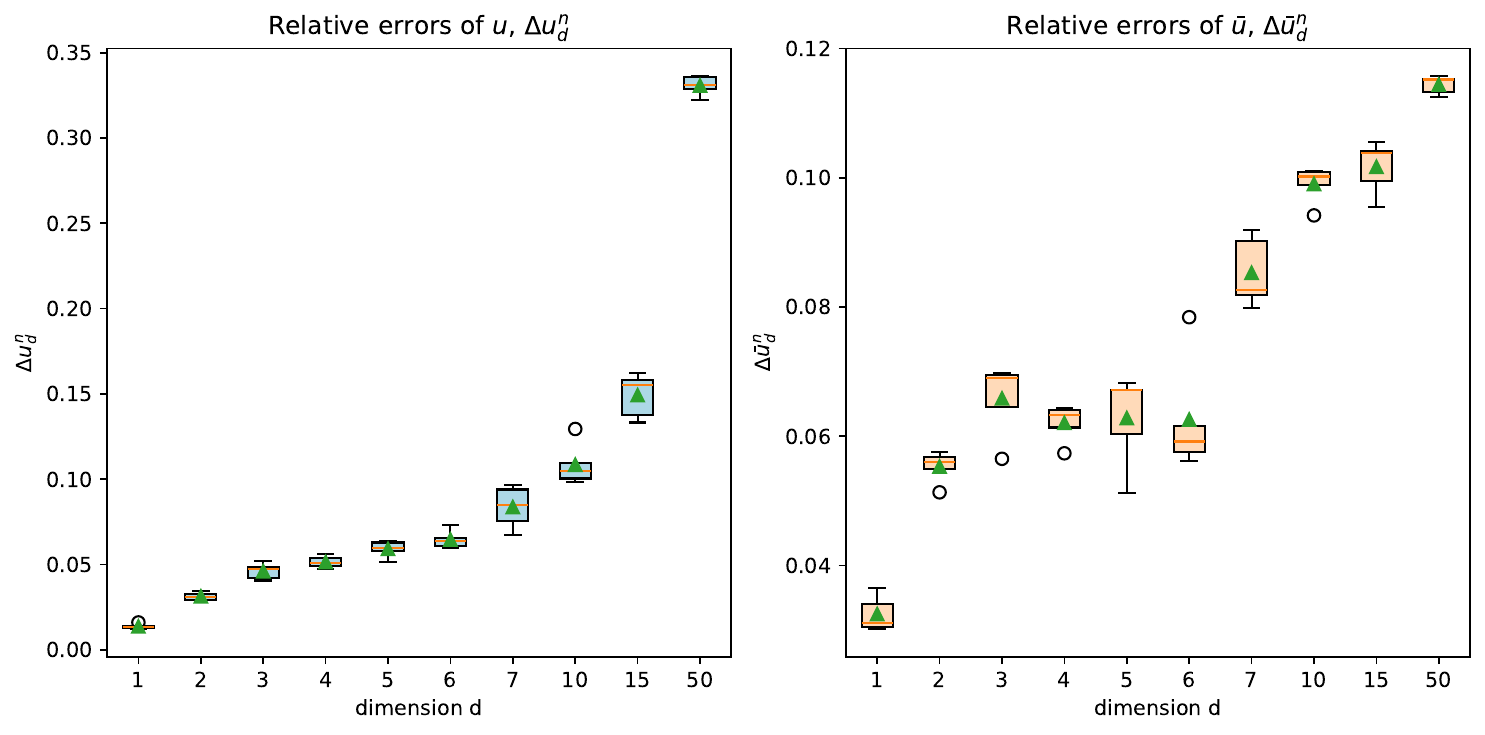}
\caption{Relative errors $\Delta u^n_d$ and $\Delta \bar{u}^n_d$ (see \eqref{eq:rel_L2norm_err}) for $n=5$ and different values of $d$ between $1$ and $50$ for 5 experiments.} 
\label{ch4:fig:box_err}
\end{figure}

\begin{figure}
\centering
\includegraphics[scale=0.61]{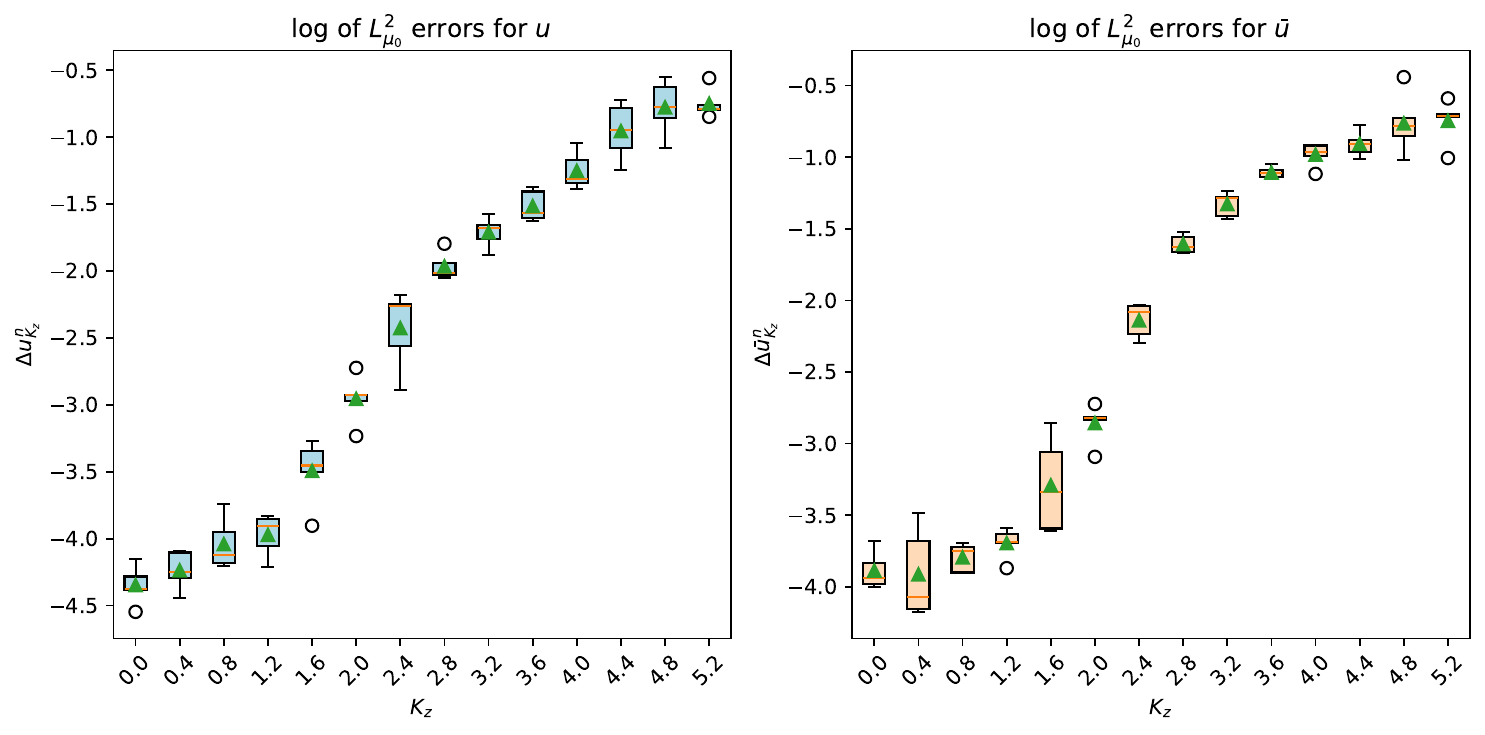}
\caption{Logarithm of $L^2_{\mu_0}$ errors $\Delta u^n_{K_z}$ and $\Delta \bar{u}^n_{K_z}$ for $n=5$ with different values of $K_z \in \{0, 0.4, \dots, 5.2 \}$ for $5$ experiments for the NN based iterative contraction scheme, Algorithm \ref{ch4:alg:contractionNN}.}
\label{ch4:fig:box_err_Kz} 
\end{figure}

\begin{figure}
    \centering
	\includegraphics[scale=0.34]{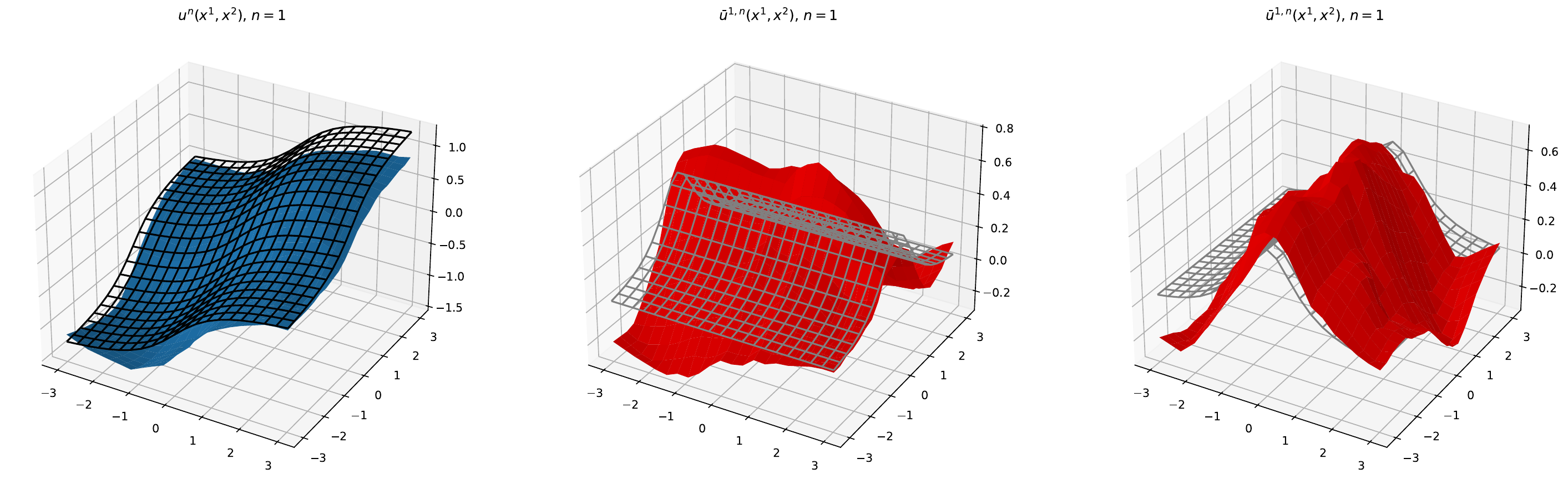}
	\includegraphics[scale=0.34]{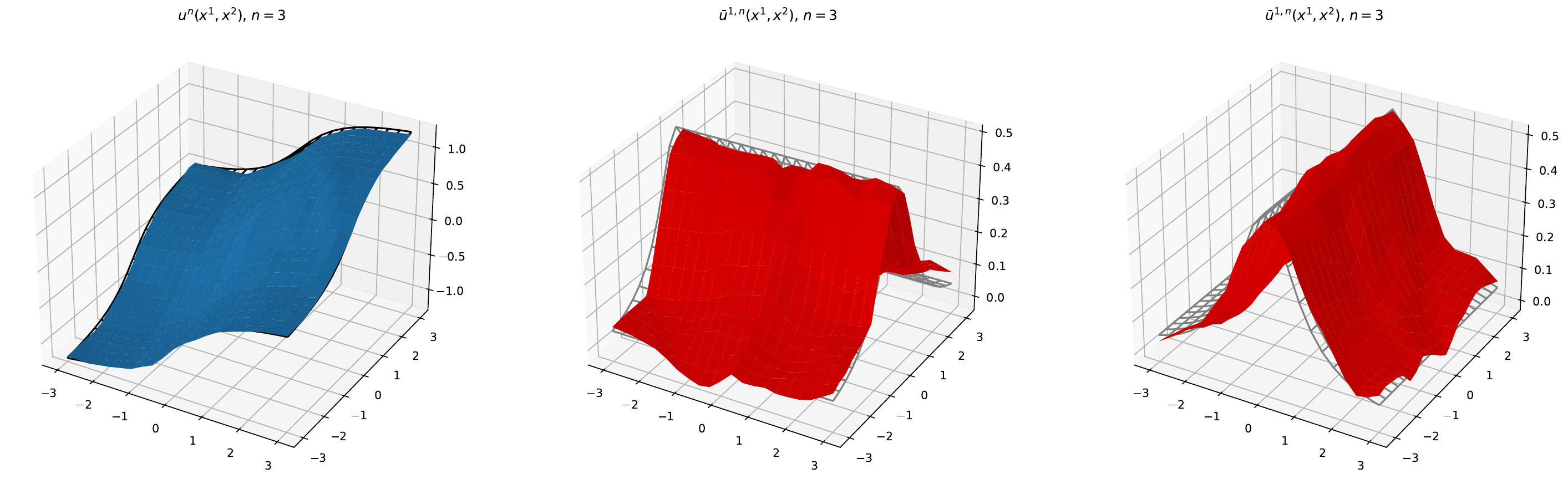}
	\includegraphics[scale=0.34]{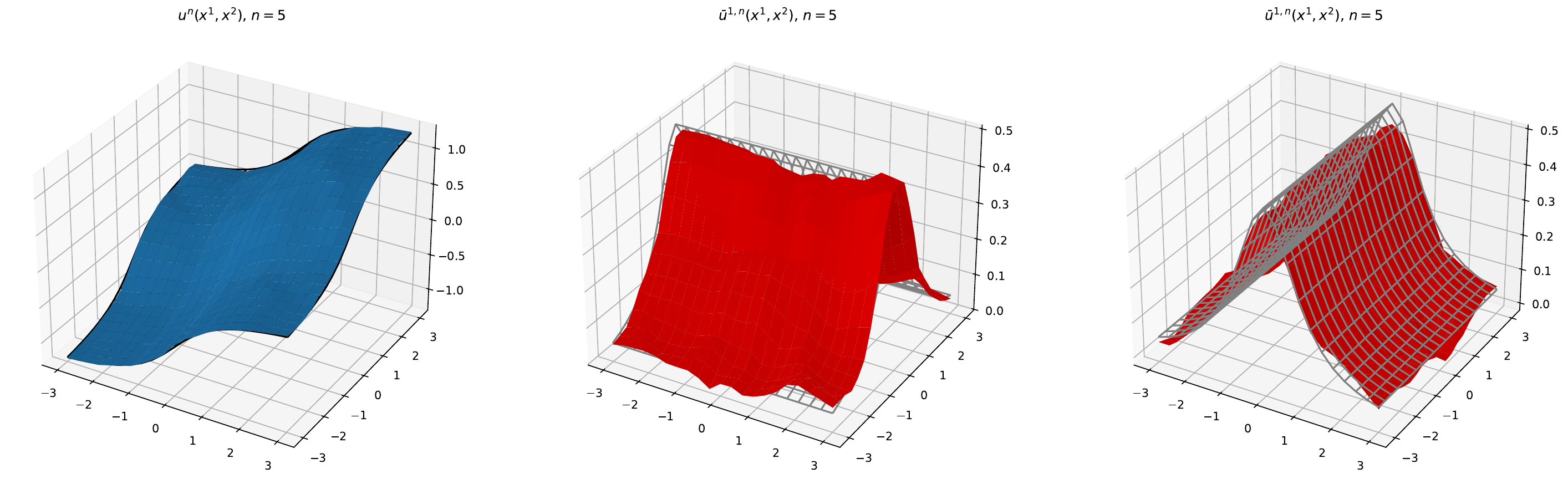}
	\includegraphics[scale=0.34]{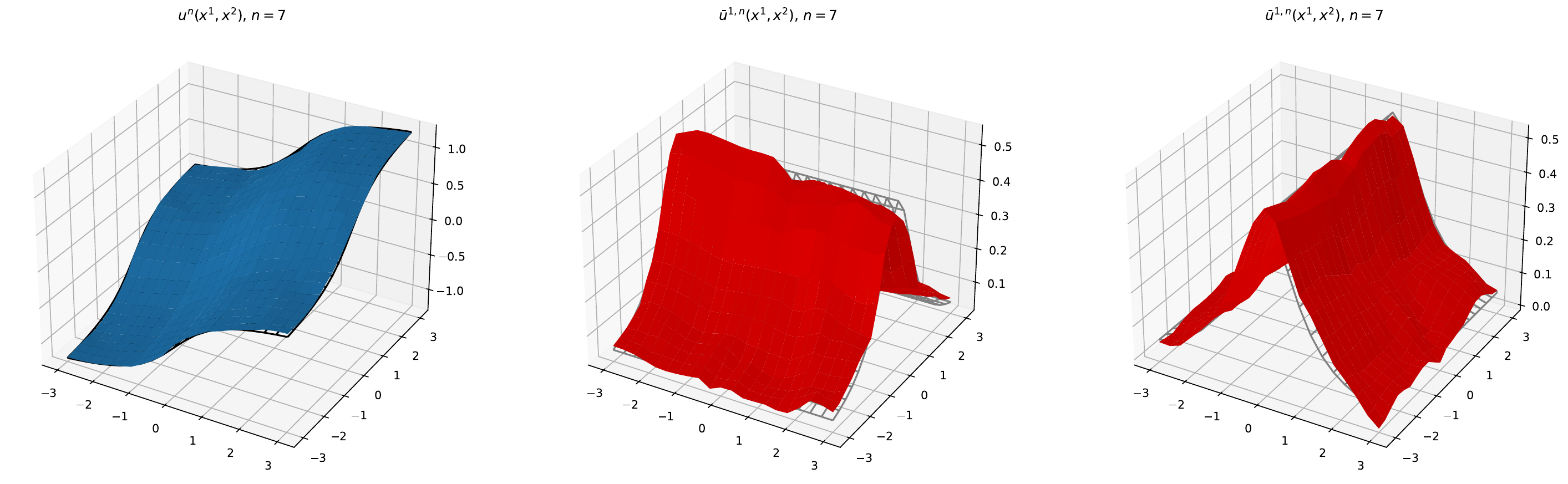}
    \caption{Picard iterations (Algorithm \ref{ch4:alg:contractionNN}) for $u^n$, $\bar{u}^{1,n}$ and $\bar{u}^{2,n}$ for $d=2$ and $n=1, 3, 5$ and $7$ (from top to bottom). The wire-frame plots are used for the analytical solution.}
    \label{ch4:fig:NN_Pic_iter_2d}
\end{figure}


\subsection{Third scheme - Direct scheme using NNs}


The third scheme is based on a direct approach that does not rely on contraction (see Algorithm \ref{ch4:alg:directNN}). We use the same NN architecture as the previous scheme and use an initial learning rate $5\times 10^{-4}$ which decays by a factor of $0.6$ after $500$ steps. We use $M_x = 512$ samples of the starting point $x$ and for each $x$, we use $M=5000$ independent samples of the exponential and gamma times along with the corresponding state process and the Malliavin derivative evaluated at these time samples. The values of the system parameters are: $c = 2$, $K_z = 0.1$, $a = 2$, $b = 2$, $\theta = 1.5$ and $\ttheta = 1.5$ unless otherwise specified. In Figure \ref{ch4:fig:NN_dir_2d}, we plot the solution for $d=2$. 


{Similarly to the previous scheme, Figure~\ref{ch4:fig:err_Kz_direct} displays the logarithm of the $L^2_{\mu_0}$ error norms $\Delta u_{K_z}$ and $\Delta \bar{u}_{K_z}$ (with no index $n$, as there are no Picard iterations) for values of $K_z$ ranging from $0$ to $5.2$, well beyond the theoretical guarantee for existence and uniqueness. Unlike the second scheme, the errors do not increase significantly with $K_z$, which is expected since this scheme does not rely on Picard iterations.}

\begin{figure}
\centering
\includegraphics[scale=0.34]{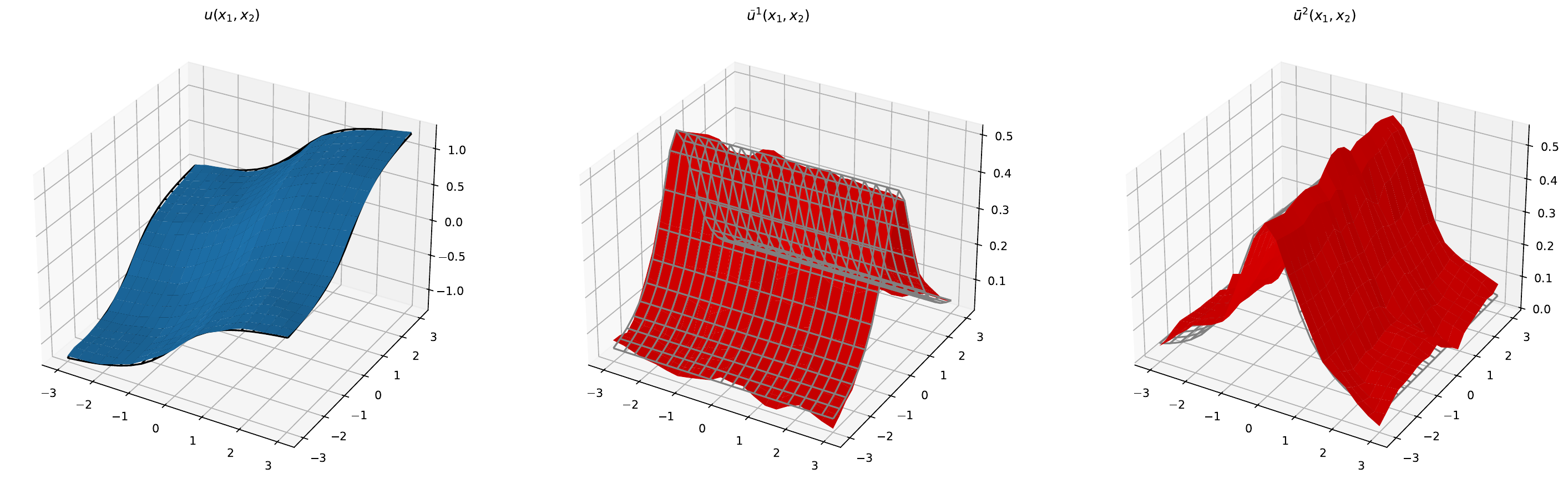}
\caption{Solution $u(x)$ and $\bar{u}(x)$ for $d=2$ for the NN based direct scheme (Algorithm \ref{ch4:alg:directNN}).}
\label{ch4:fig:NN_dir_2d}
\end{figure}

\begin{figure}
\centering
\includegraphics[scale=0.61]{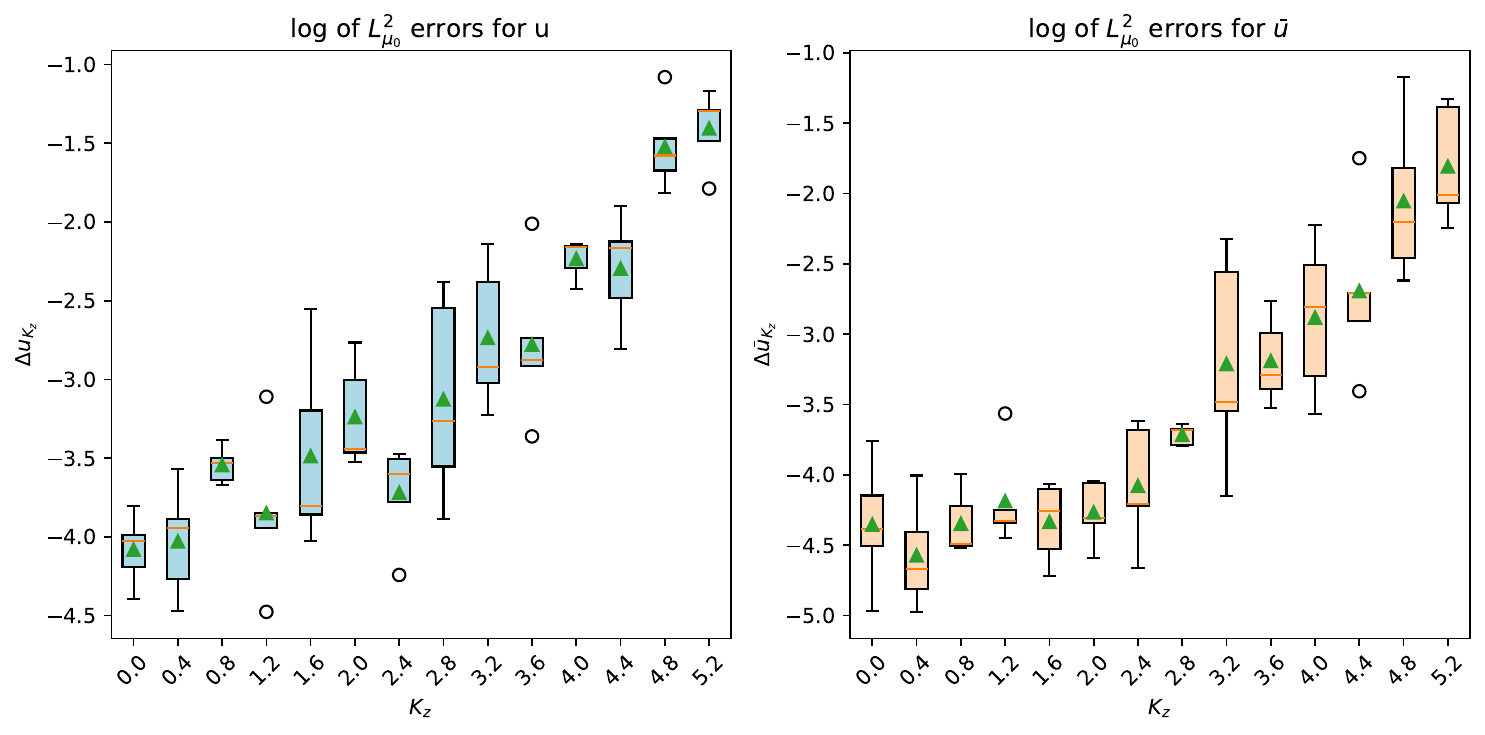}
\caption{Logarithm of $L^2_{\mu_0}$ errors $\Delta u^n_{K_z}$ and $\Delta \bar{u}^n_{K_z}$ for $n=5$, $M=3000$ and $M_x=512$ with different values of $K_z \in \{0, 0.4, \dots, 5.2 \}$ for $5$ experiments for the NN based direct scheme, Algorithm \ref{ch4:alg:directNN}.}
\label{ch4:fig:err_Kz_direct}
\end{figure}

\appendix

\section{Proofs}
\label{section:proofs}
\subsection{Proof of Proposition \ref{prop:representationBSDEinfinitehorizon}}
\label{proof:prop:representationBSDEinfinitehorizon}
{1. Let us start by proving a probabilistic representation for $u$. Applying It\^o formula to $e^{-at}\Yx_t$ we get, for all $0 \leqslant T$, 
 \begin{equation}
 \label{proof:representationBSDEinfinitehorizon:eq1}
 u(x)=\Yx_0 =\Exp{ e^{-aT}\Yx_T+ \int_0^T e^{-as} \left( f(\Xx_s,\Yx_s,\Zx_s)+a\Yx_s \right) \ds }.
 \end{equation}
By using Proposition \ref{prop:existence:uniqueness:BSDE} and Young's inequality, we have 
\begin{align*}
  & \Exp{\int_0^{+\infty} e^{-as} |f(\Xx_s,\Yx_s,\Zx_s)+a\Yx_s|\ds}\\
 \leqslant & C\Exp{ \int_0^{+\infty} e^{-(2a-\lambda)s} + e^{-\lambda s}\left(|\Yx_s|^2 + \norm{\Zx_s}^2 + |\Xx_s|^{2r}\right)\ds} <+\infty,
\end{align*}
where we have set $\lambda:=2\mu-{K_{f,z}^2}(\1_{d'>1}\vee \1_{r>0})$. The Lebesgue theorem gives us that the integral term in \eqref{proof:representationBSDEinfinitehorizon:eq1} tends to   
\begin{align*}
 \Exp{\int_0^{+\infty} e^{-as} (f(\Xx_s,\Yx_s,\Zx_s)+a\Yx_s)\ds}
\end{align*}
when $T \rightarrow + \infty$. Moreover, by Proposition \ref{prop:existence:uniqueness:BSDE} we get 
$$\lim_{T \rightarrow +\infty} \Exp{e^{-aT}|\Yx_T|} \leqslant \lim_{T \rightarrow +\infty} \left(e^{-(2a-\lambda)T}+ \Exp{e^{- \lambda T}|\Yx_T|^2}\right)  =0.$$
Thus, we have
\begin{align*}
 u(x) &= \E{\int_0^\infty  e^{-as}(f(\Xx_s,u(\Xx_s),\Zx_s)+a u(\Xx_s)) \ds}
\end{align*}
and the growth on $u$ given by \eqref{prop:representationBSDEinfinitehorizon:growth} follows from Proposition \ref{prop:existence:uniqueness:BSDE} and \eqref{prop:representationBSDEinfinitehorizon:hyp:rho}.}

{2.a. Now let us assume {temporarily} that $f \in C^1_b(\bR^{d} \times \bR^{d'} \times \bR^{d' \times d},\bR^{d'})$ and let us consider the following finite horizon BSDE 
\begin{align*}
 Y_t^{N,x} = \int_t^N f(\Xx_s,Y_s^{N,x},Z_s^{N,x})\ds -\int_t^N Z_s^{N,x} \dW_s, \quad 0 \leqslant t \leqslant N,
\end{align*}
with solution $(Y^{N,x},Z^{N,x}) \in \mathscr{S}^2_N \times \mathscr{M}^2_N$ defined on $[0,N]$ for any $x \in \bR^{d}$ and $N>0$. We can set $Y^{N,x}_t:=0$ and $Z^{N,x}_t:=0$ for all $t>N$. Let us remark that for any $\ta>0$ we also have 
\begin{align}
\label{eq:BSDE:finite:horizon}
 Y_t^{N,x} = \int_t^N e^{-\ta(s-t)} \left(f(\Xx_s,Y_s^{N,x},Z_s^{N,x}) +\ta Y_s^{N,x}\right)\ds -\int_t^N e^{-\ta(s-t)} Z_s^{N,x} \dW_s, \quad 0 \leqslant t \leqslant N.
\end{align}
By applying  \cite[Theorem 4.2]{ma:zhan:02:1}, we have the representation
$$Y^{N,x}_t = u_N(t,\Xx_t), \quad \forall t \in [0,N],$$
with $u_N(t,.) \in C^{1}( \bR^d,\bR^{d'})$ for all $t \in [0,N)$ and there exists a continuous version of $Z^{N,x}$ given by
$$Z^{N,x}_t = \nabla_x u_N(t,\Xx_t)\sigma (\Xx_t) := \bar{u}_N(t,\Xx_t), \quad t \in [0,N),$$
and
\begin{align}
\bar{u}_N(t,\Xx_t) =& \bE\bigg[ \int_t^N e^{-\ta(s-t)}\left(f(\Xx_s,Y_s^{N,x},Z_s^{N,x})+\ta Y_s^{N,x}\right)\\
& \times\frac{1}{s-t}\int_t^s \langle \sigma^{-1}(X_r^x)\nabla_x X_r^x (\nabla \Xx_t)^{-1}, \dW_r \rangle \sigma(\Xx_t) \ds\big|\cF_t \bigg].
\end{align}

Finally, from proofs of  \cite[Theorem 5.57]{pard:rasc:14} and  \cite[Lemma 3.1]{briand1998stability} we also have a uniform growth estimate for $u_N$: there exists a constant $C>0$ such that
\begin{align}
 \label{ineq:growth:uN}
 |u_N(t,x)| \leqslant C(1+|x|^r), \quad \forall t \geqslant 0, x \in \bR^d, N>0. 
\end{align}
Now we {pass to the limit as $N$ goes} to infinity. In order to do it we denote, for all $x \in \bR^d$, 
\begin{align*}
 \bar{u}(x)= \Exp{ \int_0^{+\infty} e^{-\ta s}\left(f(X_s^{x},Y_s^{x},Z_s^{x})+\ta Y_s^{x}\right)\Ux_s\ds},
\end{align*}
and we firstly show that $\bar{u}$ is well defined. Recalling the growth of $u$ (resp. $u_N$), we can use the Bismut-Elworthy representation for $\Zx$ (resp. $Z^{N,x}$) on the time interval $[0,1]$, see e.g.  \cite[Corollary 3.11]{Fuhrman-Tessitore-02}, to get that 
$$\norm{\Zx_s} + \norm{Z^{N,x}_s} \leqslant C(1+|\Xx_s|^{r}),\quad  \forall (s,x) \in [0,1] \times \bR^d, N \geqslant 2.$$

Then, we obtain
\begin{align*}
& \Exp{ \int_0^{+\infty} e^{-\ta s}\norm{\left(f(X_s^{x},Y_s^{x},Z_s^{x})+\ta Y_s^{x}\right)\Ux_s}\ds }\\
\leqslant &C\Exp{ (1+\sup_{s \in [0,1]} |\Xx_s|^{r})\int_0^{1} | \Ux_s|\ds }\\
&+ C\Exp{\int_1^{+\infty} e^{-\ta s}\left(1+|\Xx_s|^r+\norm{\Zx_s}\right)\norm{\int_0^s \langle \sigma^{-1}(\Xx_u)\nabla_x \Xx_u , \dW_u\rangle}\ds}\\
\leqslant & C(1+|x|^r)\left(\int_0^1 \frac{\ds}{s^{1/2}}\right)^{1/2}\left(\int_0^1 \frac{1}{s^{3/2}} \int_0^s \E{\norm{\sigma^{-1}(X_u^x) \nabla_x X_u^x}^2}  \du \ds \right)^{1/2}\\
&+ C\Exp{\int_0^{+\infty} e^{-\ta s}(1+|\Xx_s|^{2r}+\norm{\Zx_s}^2)\ds}^{1/2}\Exp{\int_0^{+\infty} e^{-\ta s}\norm{\nabla_x \Xx_s}^2\ds}^{1/2}\\
\leqslant & C(1+|x|^r)
\end{align*}
by applying estimates on $\Yx$ and $\Zx$ given by Proposition \ref{prop:existence:uniqueness:BSDE}, the growth of $u$, Cauchy-Schwarz inequality, Fubini theorem, \eqref{prop:representationBSDEinfinitehorizon:hyp:b}, \eqref{prop:representationBSDEinfinitehorizon:hyp:rho} and standard estimates on $\Xx$ and $\nabla_x \Xx$.}

{Now, we estimate the error between $\bar{u}_N(0,.)$ and $\bar{u}$.
Thanks to  \eqref{as:generatorf} and by using same calculations as previously, we have, for all $x \in \bR^d$, $N\geqslant 2$ and $\varepsilon \in (0,1]$,
\begin{align*}
 & \norm{\nabla_x u_N(0,x)-\bar{u}(x)\sigma^{-1}(x)} \\
 \leqslant & C\norm{\sigma^{-1}(x)}\Exp{\int_0^{+\infty} e^{-\ta s}\left(|Y_s^{N,x}-Y_s^{x}|+\norm{Z_s^{N,x}-Z^{t,x}_s}\right)|U_s^{x}|\ds}\\
 &+C\norm{\sigma^{-1}(x)}\Exp{\int_N^{+\infty} e^{-\ta s} |f(X_s^{x},0,0)||U_s^{x}|\ds}\\
 \leqslant & C \Exp{\sup_{s \in [0,1]} (|\Yx_s|+|Y_s^{N,x}|+\norm{\Zx_s}+\norm{Z_s^{N,x}})\int_0^{\varepsilon} |\Ux_s| \ds}\\
 &+C\Exp{\int_{\varepsilon}^{+\infty} e^{-\ta s} |\Ux_s|^2 \ds }^{1/2} \Exp{\int_0^{+\infty} e^{-\ta s}\left(|Y_s^{N,x}-Y_s^{x}|^2+\norm{Z_s^{N,x}-Z^{x}_s}^2\right)\ds}^{1/2}\\
 &+C \Exp{\int_1^{+\infty} e^{-\ta s} \norm{\nabla_x \Xx_s}^2 \ds }^{1/2} \Exp{\int_N^{+\infty} e^{-\ta s} (1+|\Xx_s|^{2r})\ds}^{1/2}\\
 \leqslant & C(1+|x|^r)\sqrt{\varepsilon} \\
 & + C_{\varepsilon}\Exp{\int_{0}^{+\infty} e^{-\ta s} \norm{\nabla_x \Xx_s}^2 \ds }^{1/2} \Exp{\int_0^{+\infty} e^{-\ta s}\left(|Y_s^{N,x}-Y_s^{x}|^2+\norm{Z_s^{N,x}-Z^{x}_s}^2\right)\ds}^{1/2}\\
 &+C \Exp{\int_0^{+\infty} e^{-\ta s} \norm{\nabla_x \Xx_s}^2 \ds }^{1/2} \Exp{\int_N^{+\infty} e^{-\ta s} (1+|\Xx_s|^{2r})\ds}^{1/2}
\end{align*}
and \eqref{prop:representationBSDEinfinitehorizon:hyp:b} gives us, for any {bounded} set $K \subset \bR^d$,
\begin{align*}
 \sup_{x \in K} \norm{\nabla_x u_N(0,x)-\bar{u}(x)\sigma^{-1}(x)}
 \leqslant &C_K \sqrt{\varepsilon}+ C_K\left(\int_N^{+\infty} e^{-\ta s}(1+\sup_{x \in K} \Exp{|\Xx_s|^{2r}})\ds\right)^{1/2}\\
 &+C_{K,\varepsilon}  \sup_{x \in K} \Exp{\int_0^{+\infty} e^{-\ta s}\left(|Y_s^{N,x}-Y_s^{x}|^2+\norm{Z_s^{N,x}-Z^{x}_s}^2\right)\ds}^{1/2}.
\end{align*}}
{By using \cite[(5.24)]{pard:rasc:14} and by checking carefully the proof of  \cite[Lemma 3.1]{briand1998stability}, we can easily obtain 
\begin{align*}
 \lim_{N \rightarrow + \infty}\sup_{x \in K} \Exp{\int_0^{+\infty} e^{-\ta s}\left(|Y_s^{N,x}-Y_s^{x}|^2+\norm{Z_s^{N,x}-Z_s^{x}}^2\right)\ds}^{1/2}=0, 
\end{align*}
recalling that $\ta \geqslant 2\mu-{K_{f,z}^2}(\1_{d'>1}\vee\1_{ r>0})$. Then, we get
$$\limsup_{N \rightarrow +\infty} \sup_{x \in K} \norm{\nabla_x u_N(0,x)-\bar{u}(x)\sigma^{-1}(x)} \leqslant C_K\sqrt{\varepsilon}, \quad \forall \varepsilon \in (0,1],$$
which implies that
$$\lim_{N \rightarrow +\infty} \sup_{x \in K} \norm{\nabla_x u_N(0,x)-\bar{u}(x)\sigma^{-1}(x)} = 0.$$
From proofs of  \cite[Theorem 5.57]{pard:rasc:14} and  \cite[Lemma 3.1]{briand1998stability}, we also have  that 
$$\lim_{N \rightarrow + \infty} u_N(0,x)=u(x),\quad \text{ for all } x \in \bR^d.$$ 
Finally, we can use a theorem of interchange between limit and differentiation to get that $(u_N(0,.),\nabla_x u_N(0,.))$ tends to $(u,\bar{u}\sigma^{-1})$  uniformly on all compacts, $u \in C^1(\bR^{d},\bR^{d'})$ and $\nabla_x u=\bar{u}\sigma^{-1}$. By standard arguments, see e.g.  \cite[Corollary 4.1]{ElKaroui-Peng-Quenez-97}, $\bar{u}(\Xx)$ is also a continuous version of $\Zx$.

2.b. All calculations done in part 2.a. and estimates obtained do not depend on the derivative of $f$. Then, we can consider a smooth simple approximation of $f$ by a sequence $(f_n)_{n \in \bN}$ and use the same strategy of proof as in 2.a. to tackle the non-smooth setting by sending $n$ to infinity.\qed}

\subsection{Proof of Lemmas  \ref{lem:newLipcste} and \ref{lem:boundcont}}
\label{proof:2lemmas}
{\paragraph{Proof of Lemma \ref{lem:newLipcste}.}
 For any $x,y\in \bR^{d'}$, we just have to remark that
 \begin{align}
 |h(y)+a y-(h(x)+a x)|^2&=|h(y)-h(x)|^2+
 2\ a\ \langle h(y)-h(x),y-x \rangle+a^2|y-x|^2\\
 &\leqslant |y-x|^2\left(K^2-2 \mu a+a^2\right),
 \end{align}
 and the proof is finished. \qed }

{\paragraph{Proof of Lemma \ref{lem:boundcont}.}
Let $x\in \bR^d$. Using the growth assumption on $f$ and the growth conditions on $\rho(x)$ (i.e. $C+\norm{\xxe}^r \leqslant \rho(\norm{\xxe})$), we have the following:
\begin{align}
& \dfrac{1}{\rho(x)}  \left( \norm{\Phi^1({w^1}, {w^2})(x)} + \norm{ \Phi^2({w^1}, {w^2})(x)} \right)\\ 
 {\leqslant}& \dfrac{C}{\rho(x)}
 \Bigg(\E{{\frac{e^{-(a-\theta)E} }{\theta}}\left(1 + \norm{\xxe}^{r} + |{w^1}(\xxe)| + |{w^2}(\xxe)| \right)} \\
 &\quad+ 
 \E{  {{\frac{\sqrt \pi \sqrt{\tE}e^{-(\ta-\ttheta)\tE} |U_\tE^x| }{\sqrt{\ttheta}}}}
 \left(1 + \norm{\xxet}^{r} + \norm{{w^1}(\xxet)} + |{w^2}(\xxet)| \right)} \Bigg)\\
 {\leqslant}& {\dfrac{C}{\rho(x)}
 \Bigg(\E{\frac{e^{-(a-\theta)E} }{\theta} \rho(\xxe)\left(1  + \normr{w^1}+\normr{w^2}\right)} }\\
 &\quad+ 
{ \E{  {\frac{\sqrt \pi \sqrt{\tE}e^{-(\ta-\ttheta)\tE} |U_\tE^x| }{\sqrt{\ttheta}}}
 \rho(\xxet)\left(1  + \normr{w^1}+\normr{w^2}\right)} \Bigg)}\\
 \leqslant & C\left( 1 + \normr{w} \right) (c_{\infty,\eqref{eq:condition:normA:rho}} + \tilde{c}_{\infty,\eqref{eq:condition:normA:rho}})  < +\infty,
\end{align}}
{{where the values of the constant $C$ may have changed from line to line.} 
Hence, $\normr{ \Phi(w)} < +\infty$ because the above constants are independent of $x$. 

Next, let us take a sequence $(x^n)_{n\in \bN}$ converging to $x$ in $\bR^d$. We must prove that $\Phi(w)(x^n) \to \Phi(w)(x)$. Let us consider the families of random variables $(\xi^n)_{n\in \bN}$,  $(\zeta^n)_{n\in \bN}$ and $(\chi^n)_{n \in \bN}$ defined as follows:
\begin{equation}
\begin{aligned}
\xi^n :=& \left(f\left(\xxen, {w^1}(\xxen), {w^2}(\xxen)\right) - f\left(\xxe, {w^1}(\xxe), {w^2}(\xxe)\right) + a\left({w^1}(\xxen) - {w^1}(\xxe)\right) \right)e^{-(a-\theta)E};\\
\zeta^n :=& \Big(f\big(\xxetn, {w^1}(\xxetn), {w^2}(\xxetn)\big) - f\big(\xxet, {w^1}(\xxet), {w^2}(\xxet)\big) + \ta\big({w^1}(\xxetn) - {w^1}(\xxet)\big)\Big)\times \\
& \sqrt{\tE}e^{-(\ta-\ttheta)\tE}\Ux_\tE;\\
\chi^n :=& \Big(f\big(\xxetn, {w^1}(\xxetn), {w^2}(\xxetn)\big)  + \ta\big({w^1}(\xxetn) \Big)\sqrt{\tE}e^{-(\ta-\ttheta)\tE}\Big(\Ux_\tE-U^{x^n}_\tE\Big).
\end{aligned}
\end{equation}
Using the continuity of $f$, ${w^1}$ and ${w^2}$, we can see that $\xi^n \to 0$ and $\zeta^n \to 0$ almost surely. Let us consider the following for a given $M>0$:
\begin{equation}
\E{|\xi^n|} = \E{|\xi^n|\mathbf{1}_{|\xxen|>M}} + \E{|\xi^n|\mathbf{1}_{|\xxen|\leqslant M}}.
\end{equation}
The second term goes to zero as $n \to \infty$ using dominated convergence theorem since it is uniformly (in $n$) bounded by an integrable random variable. Since $\rho$ is an increasing function, we have the following for the first term,
\begin{equation}
\begin{aligned}
\E{|\xi^n|\mathbf{1}_{|\xxen|>M}} &\leqslant \E{|\xi^n| \left(\dfrac{\rho(\xxen)}{\rho(M)}\right)^\epsilon}\\
&\leqslant \E{C\left(1+ |\xxen|^r + {\normr{w}\rho(\xxen)}+\norm{\xxe}^r +{\normr{w}\rho(\xxe)}\right) e^{-(a-\theta)E} \left(\dfrac{\rho(\xxen)}{\rho(M)}\right)^\epsilon }\\
&\leqslant C\E{\left(\rho(\xxen) + \rho(\xxe) \right)e^{-(a-\theta)E}\left(\dfrac{\rho(\xxen)}{\rho(M)}\right)^\epsilon}\\
&\leqslant \dfrac{C}{\rho(M)^\epsilon}\E{\rho(\xxen)^{1+\epsilon}e^{-(a-\theta)E} + \rho(\xxe)^{1+\epsilon} e^{-(a-\theta)E}}\\
&\leqslant \dfrac{C}{\rho(M)^\epsilon},
\end{aligned}
\end{equation}
where the second inequality follows from the growth of $f$ (Assumption \eqref{as:generatorf}), {$\normr{w}<+\infty$}, the fourth inequality comes from Young inequality and the last inequality follows from  \eqref{as:suprho} because $\{x_n, n \in \bN\} \cup \{x\}$ is a {bounded} set. Since the choice of $M$ was arbitrary and $\lim_{M \to + \infty}\rho (M)=+\infty$,
\begin{equation}
\lim_{M \to +\infty}\limsup_{n \to +\infty} \E{|\xi^n|\mathbf{1}_{|\xi^n|>M}} = 0.
\end{equation}
Hence, we get $\lim_{n \rightarrow + \infty}\Exp{|\xi^n|}=0$. By same arguments we can also show that $\lim_{n \rightarrow + \infty}\Exp{|\zeta^n|}=0$.
{Now, let us tackle the last term $\E{|\chi^n|}$. We have, by using the growth of $f$, H\"older's inequality and Assumption \eqref{as:suprho},
\begin{equation}
\begin{aligned}
    \E{|\chi^n|} &\leqslant C\E{\rho(\xxen)\sqrt{\tE}e^{-(\ta-\ttheta)\tE}\left|U^{x}_\tE -U^{x^n}_\tE\right|}\\
    &\leqslant C \E{\rho(\xxen)^{1+\varepsilon}\sqrt{\tE}e^{-(\ta-\ttheta)\tE}\left(\left|U^{x}_\tE\right|+\left|U^{x^n}_\tE\right|\right)}^{\frac{1}{1+\varepsilon}}\E{\sqrt{\tE}e^{-(\ta-\ttheta)\tE}\left|U^{x}_\tE -U^{x^n}_\tE\right|}^{\frac{\varepsilon}{1+\varepsilon}}\\
    &\leqslant C \E{\sqrt{\tE}e^{-(\ta-\ttheta)\tE}\left|U^{x}_\tE -U^{x^n}_\tE\right|}^{\frac{\varepsilon}{1+\varepsilon}}.
\end{aligned}
\end{equation}
Then, by using Burkholder-Davis-Gundy inequality, we get, thanks to the boundedness of $\sigma$ and $\sigma^{-1}$,
\begin{equation}
\label{ineq:chi:0}
    \E{\sqrt{\tE}e^{-(\ta-\ttheta)\tE}\left|U^{x}_\tE -U^{x^n}_\tE\right|} \leqslant \E{\frac{1}{\sqrt{\tE}}\left(\int_0^{\tE} |\nabla_x X_s^{x^n}-\nabla_x X_s^{x}|^2 \ds\right)^{1/2}e^{-(\ta-\ttheta)\tE}}.
\end{equation}
Let us denote $B^n_M$ the random event $\left\{ \int_0^{\tE} |\nabla_x X_s^{x^n}-\nabla_x X_s^{x}|^2 \ds \leqslant M \tE  \right\}$. Then, we can prove easily that $\int_0^{\tE} |\nabla_x X_s^{x^n}-\nabla_x X_s^{x}|^2 \ds$ tends to $0$ a.s. when $n \to + \infty$ and then
\begin{equation}
\label{ineq:chi:1}
    \lim_{n \to + \infty} \E{\frac{1}{\sqrt{\tE}}\left(\int_0^{\tE} |\nabla_x X_s^{x^n}-\nabla_x X_s^{x}|^2 \ds\right)^{1/2}e^{-(\ta-\ttheta)\tE}\1_{B^n_M}} =0
\end{equation}
thanks to dominated convergence theorem. Moreover, we also have
\begin{align}
    &\E{\frac{1}{\sqrt{\tE}}\left(\int_0^{\tE} |\nabla_x X_s^{x^n}-\nabla_x X_s^{x}|^2 \ds\right)^{1/2}e^{-(\ta-\ttheta)\tE}\1_{\bar{B}^n_M}}\\
    &\leqslant \frac{1}{\sqrt{M}} \E{\frac{1}{\tE}\left(\int_0^{\tE} |\nabla_x X_s^{x^n}-\nabla_x X_s^{x}|^2 \ds\right)e^{-(\ta-\ttheta)\tE}}.\label{ineq:chi:2}
\end{align}
By using Fubini Theorem, we obtain the following upper-bound
\begin{equation}
    \begin{aligned}
    \E{\frac{1}{\tE}\left(\int_0^{\tE} |\nabla_x X_s^{x^n}-\nabla_x X_s^{x}|^2 \ds\right)e^{-(\ta-\ttheta)\tE}} &= \sqrt{\frac{\ttheta}{\pi}} \int_0^{+\infty} \int_0^{t} |\nabla_x X_s^{x^n}-\nabla_x X_s^{x}|^2 \ds \frac{e^{-\ta t}}{t^{3/2}} \dt\\
    &=  \sqrt{\frac{\ttheta}{\pi}} \int_0^{+\infty} \int_s^{+\infty} \frac{e^{-\ta t}}{t^{3/2}} \dt |\nabla_x X_s^{x^n}-\nabla_x X_s^{x}|^2 \ds\\
    &\leqslant C \int_0^{+\infty} \frac{e^{-\ta s}}{\sqrt{s}} |\nabla_x X_s^{x^n}-\nabla_x X_s^{x}|^2 \ds\\
    &\leqslant C \int_0^{+\infty} {e^{-\ta s}} |\nabla_x X_s^{x^n}|^2 +|\nabla_x X_s^{x}|^2 \ds\\
    &\quad + C \int_0^{1} \frac{e^{-\ta s}}{\sqrt{s}} |\nabla_x X_s^{x^n}|^2 +|\nabla_x X_s^{x}|^2 \ds.
\end{aligned}
\end{equation}
Under Assumptions \eqref{as:generatorf} and \eqref{as:b:sigma} we have the classical estimate $\sup_{t \in [0,1], x\in \mathbb{R}^d}\E{|\nabla_x X_t^x|} <+\infty$. Then, by using this bound and  \ref{as:suprho} in our last inequality, we get
\begin{equation}
    \sup_{n \in \bN} \E{\frac{1}{\tE}\left(\int_0^{\tE} |\nabla_x X_s^{x^n}-\nabla_x X_s^{x}|^2 \ds\right)e^{-(\ta-\ttheta)\tE}} <+\infty.
\end{equation}
Thus, plugging this bound in \eqref{ineq:chi:2}, recalling \eqref{ineq:chi:1} and \eqref{ineq:chi:0}, gives us
\begin{equation}
\limsup_{n \to + \infty} \E{\sqrt{\tE}e^{-(\ta-\ttheta)\tE}\left|U^{x}_\tE -U^{x^n}_\tE\right|} \leqslant \frac{C}{\sqrt{M}}.
\end{equation}
Since the choice of $M$ was arbitrary, we can take $M \to + \infty$ in the previous inequality to get $\lim_{n \rightarrow + \infty}\Exp{|\chi^n|}=0$,
proving the continuity of $\Phi({w})$ for a given continuous ${w}$.}
\qed}

\subsection{Proof of Proposition \ref{prop:numericalerror}}
\label{proof:prop:numericalerror}

Let us denote, for $n \geqslant 0$,
$$e_{\infty,n+1}:=\Exp{ \sup_{x \in \bR^d} \norm{\frac{ Pv^{n+1}_M(x) -v(x)}{\rho_{r'}(x)}}}.$$
We have
\begin{align*}
 e_{\infty,n+1} \leqslant  \cE_{\infty,1} +  \cE_{\infty,2} +  \cE_{\infty,3}
\end{align*}
with
\begin{align*}
 \cE_{\infty,1} &:= \Exp{ \sup_{x \in \bR^d} \norm{ \frac{Pv^{n+1}_M(x) -P(T_{B,{{\rho}_r}}(\bE_{v^n_M}[R^{(\cdot)}(Pv^n_M)]))(x)}{\rho_{r'}(x)}}},\\
 \cE_{\infty,2} &:= \Exp{ \sup_{x \in \bR^d} \norm{\frac{Pv(x)-P(T_{B,{{\rho}_r}}(\bE_{v^n_M}[R^{(\cdot)}(Pv^n_M)]))(x)}{\rho_{r'}(x)}}},\\
 \cE_{\infty,3} &:= \sup_{x \in \bR^d} \norm{\frac{Pv(x)-v(x)}{\rho_{r'}(x)}},
\end{align*}
where $\mathbb{E}_{v^n_M}$ is the expectation conditionally to $v_M^n$.

\paragraph{Error $\cE_{\infty,1}$:}
Recalling that $P$ is linear, {$\lfloor . \rfloor_{B}$} is $1$-Lipschitz and applying Item \ref{item3} of Proposition \ref{prop:properties-P}, we get
\begin{align}
 \nonumber \cE_{\infty,1} &= \Exp{ \sup_{x \in \bR^d} \norm{ \frac{P\left(T_{B,{{\rho}_r}}(\frac{1}{M}\sum_{j=1}^M R_{n,j}^{(\cdot)}(Pv^n_M)) -T_{B,{{\rho}_r}}(\bE_{v^n_M}[R^{(\cdot)}(Pv^n_M)])\right)(x)}{\rho_{r'}(x)}}}\\
 &\leqslant  {\Exp{\sup_{z \in \Pi} \frac{{\rho}_r(z)}{\rho_{r'}(z)}\norm{ \left\lfloor \frac{1}{M}\sum_{j=1}^M\frac{R_{n,j}^{z}(Pv^n_M)}{{\rho}_r(z)}\right\rfloor_{B} - \left\lfloor \bE_{v^n_M}\left[\frac{R^{z}(Pv^n_M)}{{\rho}_r(z)} \right]\right\rfloor_{B} }}\sup_{x \in \bR^d} \left|\frac{P\rho_{r'}(x)}{\rho_{r'}(x)} \right|}\\
 &\leqslant \sup_{x \in \bR^d} \left|\frac{P\rho_{r'}(x)}{\rho_{r'}(x)} \right| \Exp{\sup_{z \in \Pi} \norm{H_{z}}}\,,
 \label{ineq:error1}
\end{align}
where 
$$H_z := \frac{1}{M} \sum_{j=1}^M \frac{R^{z}_{n,j}(Pv^n_M)}{\rho_{r'}(z)} - \bE_{v^n_M}\left[\frac{R^{z}(Pv^n_M)}{\rho_{r'}(z)} \right].$$

Since $\Psi$ is a convex function, Jensen inequality gives us
\begin{align*}
 \Exp{\sup_{z \in \Pi} \norm{H_{z}}} &\leqslant  \left| \sup_{z \in \Pi} \norm{H_z} \right|_{\Psi} \Psi^{-1}\left( \Exp{\Psi \left(\frac{\sup_{z \in \Pi} \norm{H_z}}{\left| \sup_{z \in \Pi} \norm{H_z} \right|_{\Psi}}\right)}\right)\leqslant  \Psi^{-1}(1) \left| \sup_{z \in \Pi} \norm{H_z} \right|_{\Psi}.
\end{align*}
Then we upper bound the right hand side of the previous inequality by applying Maximal inequality \eqref{ineq:maximal} to get
\begin{align}
\label{ineq:EsupH}
 \Exp{\sup_{z \in \Pi} \norm{H_z}} &\leqslant C \Psi^{-1}(N)\sup_{z \in \Pi} \left| H_{z} \right|_{\Psi}.
\end{align}
Since, conditionally to $v^n_M$, $H_z$ is a sum of centered independent r.v., we can apply a conditional version of Talagrand inequality \eqref{ineq:Talagrand}:
\begin{align*}
 \left| H_{z} \right|_{\Psi,v^n_M} & \leqslant C\left( \mathbb{E}_{v_M^n}\left[ \norm{H_z}\right]+ \left| \sup_{1 \leqslant j \leqslant M} \frac{\norm{ \frac{R^z_{n,j}(Pv^n_M)}{\rho_{r'}(z)} - \bE_{v^n_M}\left[\frac{R^{z}(Pv^n_M)}{\rho_{r'}(z)} \right]} }{M} \right|_{\Psi,v_M^n} \right)
\end{align*}
where the notation $|.|_{\Psi,v_M^n}$ means that the Orliz norm is computed conditionally to $v_M^n$.
It gives us, using \eqref{ass:finite-Orlicz} and Maximal inequality \eqref{ineq:maximal},
{\begin{align*}
 \norm{H_{z}}_{\Psi,v^n_M}  \leqslant & C \frac{1}{\sqrt{M}}\bE_{v_M^n}\left[ \norm{ \frac{R^z(Pv^n_M)}{\rho_{r'}(z)} - \bE_{v^n_M}\left[\frac{R^{z}(Pv^n_M)}{\rho_{r'}(z)} \right]}^2 \right]^{1/2}\\
 &+ C\frac{\Psi^{-1}(M)}{M}\left| \frac{R^z(Pv^n_M)}{\rho_{r'}(z)} - \bE_{v^n_M}\left[\frac{R^{z}(Pv^n_M)}{\rho_{r'}(z)} \right] \right|_{\Psi,v^n_M}\\
 \leqslant & C(M^{-1/2}+M^{-1} \Psi^{-1}(M)).  
\end{align*}}

Since the previous bound is deterministic, we just have to put it in \eqref{ineq:EsupH} and \eqref{ineq:error1} to get
$$ \cE_{\infty,1} \leqslant C\sup_{x \in \bR^d} \left|\frac{P\rho_{r'}(x)}{\rho_{r'}(x)} \right| \frac{\Psi^{-1}(N)}{\sqrt{M}}\left(1+\frac{\Psi^{-1}(M)}{\sqrt{M}}\right).$$

\paragraph{Error $\cE_{\infty,2}$:}
{Recalling Proposition \ref{pr:Lpestimate}, we have that $v$ is the unique solution of the fixed point equation $\Phi(v)=v$. Moreover, $B \geqslant \norm{v}_{{\rho}_r}$ implies that $T_{B, {\rho}_r}(\Phi(v)) = v$. Then we get, by using the linearity of $P$ and the third inequality in Proposition \ref{prop:properties-P}, 
\begin{equation*}
	\begin{split}
	\cE_{\infty,2} 
		 = & \Exp{ \sup_{x \in \bR^d} \norm{\frac{P(T_{B,{{\rho}_r}}\Phi(v(\cdot))-T_{B,{{\rho}_r}}\Phi(Pv^n_M(\cdot)))(x)}{\rho_{r'}(x)}}} \\
		 \leqslant  & \left(\sup_{x \in \bR^d} \frac{P\rho_{r'}(x)}{\rho_{r'}(x)}\right)\Exp{ \sup_{x \in \bR^d} \norm{\frac{T_{B,{{\rho}_r}}\Phi(v(\cdot))(x)-T_{B,{{\rho}_r}}\Phi(Pv^n_M(\cdot))(x)}{\rho_{r'}(x)}}}\,\\
         \leqslant  & \left(\sup_{x \in \bR^d} \frac{P\rho_{r'}(x)}{\rho_{r'}(x)}\right)\Exp{ \sup_{x \in \bR^d} \frac{{\rho}_r(x)}{\rho_{r'}(x)}\norm{\left\lfloor \frac{\Phi(v(\cdot))(x)}{{\rho}_r(x)}\right\rfloor_B-\left\lfloor \frac{\Phi(Pv^n_M(\cdot))(x)}{{\rho}_r(x)}\right\rfloor_B }}.
	\end{split}
\end{equation*}
Then, the fact that {$\lfloor . \rfloor_{B}$} is $1$-Lipschitz and Proposition \ref{pr:Lpestimate} give us
\begin{align}
    \cE_{\infty,2} \leqslant & \left(\sup_{x \in \bR^d} \frac{P\rho_{r'}(x)}{\rho_{r'}(x)}\right)\Exp{ \sup_{x \in \bR^d} \norm{ \frac{\Phi(v(\cdot))(x)-\Phi(Pv^n_M(\cdot))(x)}{\rho_{r'}(x)} }}\,\\
    \leqslant & \left(\sup_{x \in \bR^d} \frac{P\rho_{r'}(x)}{\rho_{r'}(x)}\right)\kappa_\infty \Exp{ \sup_{x \in \bR^d} \norm{\frac{v(x)-Pv^n_M(x)}{\rho_{r'}(x)}}} = \left(\sup_{x \in \bR^d} \frac{P\rho_{r'}(x)}{\rho_{r'}(x)}\right)\kappa_{\infty} e_{\infty,n}.
\end{align}
}

\paragraph{Error $\cE_{\infty,3}$:}
We have assumed that $v$ is $C^2$. Then, by using the second inequality in Proposition \ref{prop:properties-P} thanks to the assumption on the growth of $\norm{\nabla^2 v}$, and the growth of $v$, we have
\begin{align*}
 \sup_{x \in \bR^d} \norm{\frac{Pv(x)-v(x)}{\rho_{r'}(x)}} & \leqslant \sup_{x \in \Box} \norm{\frac{Pv(x)-v(x)}{\rho_{r'}(x)}} + \sup_{x \in \bR^d \setminus \Box} \norm{\frac{Pv(x)-v(x)}{\rho_{r'}(x)}}\\
 & \leqslant C\delta^2(1+\delta^{r'})+ C\sup_{x \in \bR^d \setminus \Box} \left(\frac{1+|x|^r}{\rho_{r'}(x)}\right).
\end{align*}
where $C$ does not depend on $\Pi$. 

\paragraph{Error $e_{\infty,n}$:} 
Now we just have to collect previous estimates to get
\begin{align*}
 e_{\infty,n+1} \leqslant& C\sup_{x \in \bR^d} \left|\frac{P\rho_{r'}(x)}{\rho_{r'}(x)} \right|\frac{\Psi^{-1}(N)}{\sqrt{M}}\left(1+\frac{\Psi^{-1}(M)}{\sqrt{M}}\right)+\left(\sup_{x \in \bR^d} \frac{P\rho_{r'}(x)}{\rho_{r'}(x)}\right)\kappa_{\infty}e_{\infty,n}
 +C\delta^2(1+\delta^{r'})\\ 
 &+  C\sup_{x \in \bR^d \setminus \Box} \left(\frac{1+|x|^r}{\rho_{r'}(x)}\right).
\end{align*}
{Since $r' \geqslant 2$, we can use inequality 4. of Proposition \ref{prop:properties-P} to get that $\sup_{x \in \bR^d} \frac{P\rho_{r'}(x)}{\rho_{r'}(x)} \leqslant 1+C\delta^2(1+\delta^{r'})$. Then we just have to set $\varepsilon>0$ and $\delta_0>0$ small enough such that 
\begin{equation}
    \sup_{0<\delta \leqslant \delta_0} \sup_{x \in \bR^d} \frac{P\rho_{r'}(x)}{\rho_{r'}(x)} \leqslant 1+\varepsilon <\kappa_{\infty}^{-1}.
\end{equation}
}
Finally, our estimates on $e_{\infty,n}$, gives us, for $\delta \leqslant \delta_0$,
\begin{align*}
 e_{\infty,n}
 \leqslant &C\left(\sum_{k=0}^{n-1} (1+\varepsilon)^k\kappa_{\infty}^k\right) \left(\frac{\Psi^{-1}(N)}{\sqrt{M}}\left(1+\frac{\Psi^{-1}(M)}{\sqrt{M}}\right)+\delta^2+\sup_{x \in \bR^d \setminus \Box} \left(\frac{1+|x|^r}{\rho_{r'}(x)}\right)\right)
 +(1+\varepsilon)^n\kappa_{\infty}^n e_{\infty,0}
\end{align*}
and we just have to use that $0 \leqslant (1+\varepsilon)\kappa_{\infty} <1$ in order to conclude.

\subsection{Proof of Lemma \ref{lem:estim:orlicz}}
\label{proof:lem:estim:orlicz}
1. We have for all $c>0$, by using the growth of $\phi$ and Young's inequality, 
\begin{align*}
 \Exp{ e^{c\norm{\frac{R^z(P\phi)}{\rho_{r'}(z)}}^{\frac{1}{r+1}}}}
 \leqslant& \Exp{ e^{cC\left(\frac{|U_\tE^z|\sqrt{\tE}e^{-(\ta-\ttheta)\tE}(1+|X_\tE^z|^r)+e^{-(a-\theta)E}(1+|X_E^z|^r)}{\rho_{r'}(z)}\right)^{\frac{1}{r+1}}}}\\
 \leqslant& \Exp{ e^{cC\left(\frac{|U_\tE^z|\sqrt{\tE}e^{-(\ta-\ttheta)\tE}(1+|X_\tE^z|^r)}{\rho_{r'}(z)}\right)^{\frac{1}{r+1}}} e^{cC\left(\frac{e^{-(a-\theta)E}(1+|X_E^z|^r)}{\rho_{r'}(z)}\right)^{\frac{1}{r+1}}}}\\
 \leqslant & \frac12 \Exp{ e^{cC\left(\frac{|U_\tE^z|\sqrt{\tE}e^{-(\ta-\ttheta)\tE}(1+|X_\tE^z|^r)}{\rho_{r'}(z)}\right)^{\frac{1}{r+1}}}} + \frac12 \Exp{ e^{cC\left(\frac{e^{-(a-\theta)E}(1+|X_E^z|^r)}{\rho_{r'}(z)}\right)^{\frac{1}{r+1}}}},
\end{align*} 
where, as usual, $C$ may change from one term to another but it does not depend on $c$, $\Pi$, $z$ and $\phi$ (even if it depends on $B$).
Young's inequality gives us, if $r>0$,
$$\left(\frac{|U_\tE^z|\sqrt{\tE}e^{-(\ta-\ttheta)\tE}(1+|X_\tE^z|^r)}{\rho_{r'}(z)}\right)^{\frac{1}{r+1}} \leqslant e^{-\frac{(\ta-\ttheta)}{r+1}\tE} \left( \frac{|U_\tE^z|\sqrt{\tE}}{r+1} + \frac{r}{r+1}\left(\frac{(1+|X_\tE^z|^r)}{\rho_{r'}(z)}\right)^{\frac1r} \right).$$
Moreover, when $r=0$ the previous upper-bound stays true without the second term and with a coefficient $2$ in front.
Then, H\"older's inequality gives us, 
\begin{align*}
  \Exp{ e^{c\norm{\frac{R^z(P\phi)}{\rho_{r'}(z)}}^{\frac{1}{r+1}}}} \leqslant& \frac{e^{cC}}{2}\Exp{ e^{cC|U_\tE^z|\sqrt{\tE}e^{-\frac{(\ta-\ttheta)}{1+r}\tE}}}\Exp{ e^{cC\1_{r>0}\frac{|X_\tE^z| }{(1+|z|^{r'})^{1/r}}e^{-\frac{(\ta-\ttheta)}{1+r}\tE}}}\\
  &+\frac{e^{cC}}{2}\Exp{e^{cC\frac{|X_{{E}}^z|^{r/(r+1)}}{(1+|z|^{r'})^{1/(r+1)}}e^{-\frac{(a-\theta)}{r+1} E}}}.
\end{align*}
Let us upper bound three previous terms (only two if $r=0$).
\begin{align*}
 \Exp{ e^{cC|U_\tE^z|\sqrt{\tE}e^{-\frac{(\ta-\ttheta)}{1+r} \tE}}} \leqslant \int_0^{+\infty} \frac{\sqrt{\ttheta}}{\sqrt{\pi}\sqrt{t}} \Exp{e^{cC \frac{1}{\sqrt{t}}M_{\sigma^{-1}}\left|\int_0^t \langle \nabla_x X_s^z , \dW_s\rangle\right| M_{\sigma} e^{-\frac{(\ta-\ttheta)}{1+r} {t}}}}e^{-\ttheta t} \dt.  
\end{align*}
Since $\norm{\nabla_x X_s^z} \leqslant e^{K_b t}$ for $0 \leqslant s \leqslant t$, it follows from the Dambis-Dubins-Schwarz representation of the continuous martingale $t \mapsto \int_0^t \langle \nabla_x X_s^z , \dW_s\rangle$ that
\begin{align*}
 \Exp{e^{cC \frac{1}{\sqrt{t}}M_{\sigma^{-1}}\left|\int_0^t \langle \nabla_x X_s^z , \dW_s \rangle \right| M_{\sigma^{}} e^{-\frac{(\ta-\ttheta)}{1+r} {t}}}} &\leqslant \Exp{\sup_{0 \leqslant s \leqslant e^{2K_b t}t} e^{cC\frac{1}{\sqrt{t}}\left|W_s\right|  e^{-\frac{(\ta-\ttheta)}{1+r} {t}}}}\\
 &\leqslant \Exp{\sup_{0 \leqslant s \leqslant 1} e^{cC \left|W_s\right| e^{-\frac{(\ta-\ttheta-(1+r)K_b)}{1+r} {t}}}}\\
 &\leqslant \Exp{ e^{cC \sup_{0 \leqslant s \leqslant 1}W_s}e^{cC \sup_{0 \leqslant s \leqslant 1}(-W_s)}}\\
 &\leqslant \Exp{ e^{cC |W_1|}}\leqslant e^{(c+c^2)C}
\end{align*}
using assumption $\ta-\ttheta-(1+r)K_b>0$, Cauchy-Schwarz inequality and the fact that $\sup_{0 \leqslant s \leqslant 1}W_s$, $\sup_{0 \leqslant s \leqslant 1} (-W_s)$ and $|W_1|$ have the same distribution. Thus we get that
$$\Exp{ e^{cC|U_\tE^z|\sqrt{\tE}e^{-\frac{(\ta-\ttheta)}{1+r}\tE}}}\leqslant e^{(c+c^2)C}.$$
By mimicking computations of \cite[page 563]{Briand-Hu-08} and using again previous arguments, we have
\begin{align*}
 \Exp{ e^{cC\frac{|X_\tE^z| }{(1+|z|^{r'})^{1/r}}e^{-\frac{(\ta-\ttheta)}{1+r}\tE}}} &\leqslant  \int_0^{+\infty} \frac{\sqrt{\ttheta}}{\sqrt{\pi}\sqrt{t}} \Exp{e^{cC \frac{|z|+t+ M_{\sigma}\sup_{0 \leqslant s \leqslant t} |W_s|}{(1+|z|^{r'})^{1/r}}e^{K_b t} e^{-\frac{(\ta-\ttheta)}{1+r} t}}}e^{-\ttheta t} \dt\\
 &\leqslant   \int_0^{+\infty} \frac{\sqrt{\ttheta}}{\sqrt{\pi}\sqrt{t}} \Exp{\sup_{0\leqslant s\leqslant 1} e^{cC(1+t+ \sqrt{t}|W_s|) e^{-\frac{(\ta-\ttheta-(1+r)K_b)}{1+r} t}}}e^{-\ttheta t} \dt\\
 &\leqslant   \int_0^{+\infty} \frac{\sqrt{\ttheta}}{\sqrt{\pi}\sqrt{t}} \Exp{\sup_{0\leqslant s\leqslant 1} e^{cC (1+|W_s|)}}e^{-\ttheta t} \dt\leqslant e^{(c+c^2)C}.
\end{align*}
In the same way, recalling that $a-\ttheta-r K_b>0$, we also have
$$\Exp{ e^{cC\frac{|X_{{E}}^z|^{r/(r+1)}}{(1+|z|^{r'})^{1/(r+1)}}e^{-\frac{(a-\theta)}{r+1} E}}} \leqslant e^{(c+c^2)C}.$$
By collecting previous estimates, we finally get
\begin{equation}
 \label{ineq:proof:orlicz}
 \Exp{ e^{c\norm{\frac{R^z(P\phi)}{\rho_{r'}(z)}}^{\frac{1}{r+1}}}} \leqslant e^{(c+c^2)C}.
\end{equation}

Moreover, we also have
\begin{align*}
\norm{\Exp{ \left(\frac{R^z(P\phi)}{\rho_{r'}(z)}\right)}} \leqslant & \Exp{\norm{\frac{R^z(P\phi)}{\rho_{r'}(z)}}^{2}}^{1/2} \\
 \leqslant & C\left(\int_0^{+\infty} \frac{1}{\sqrt{t}}\Exp{\frac{t|U_{t}^z|^2 (1+|X_{t}^z|^{2r})}{\rho_{r'}(z)^2}} e^{-(2\ta-\ttheta)t}\dt \right)^{1/2}\\
 & +C\left(\int_0^{+\infty} \Exp{\frac{1+|X_t^z|^{2r}}{\rho_{r'}(z)^2} } e^{-(2a-{\theta})t}\dt \right)^{1/2}\\
 \leqslant & C\left(\int_0^{+\infty} \frac{1}{\sqrt{t}}\Exp{t^2|U_{t}^z|^4}^{1/2} \Exp{\frac{1+|X_{t}^z|^{4r}}{\rho_{r'}(z)^4}}^{1/2} e^{-(2\ta-\ttheta)t}\dt \right)^{1/2}\\
 & +C\left(\int_0^{+\infty} \Exp{\frac{1+|X_t^z|^{2r}}{\rho_{r'}(z)^2} } e^{-(2a-{\theta})t}\dt \right)^{1/2}.
\end{align*}
Thanks to Burkholder-Davis-Gundy and Jensen's inequalities, we have, for all $ p \geqslant 1$,
\begin{align}
\label{bound:E[tUt]}
  \hspace{-4mm}\Exp{t^p|U_{t}^z|^{2p}} \leq& C \Exp{\left(\frac{1}{t}\int_0^t \norm{\nabla_x X_s}^2 \ds \right)^{p}} \leqslant C \frac{1}{t}\int_0^t \Exp{\norm{\nabla_x X_s}^{2p}} \ds \leqslant C \sup_{s \in [0,t]} \Exp{\norm{\nabla_x X_s}^{2p}}.\quad 
\end{align}
Then, using \eqref{bound:E[tUt]} with $p=2$ and Remark \ref{rem:boundestimatesX}, we get, for all $\varepsilon>0$,
\begin{align*}
\Exp{ \norm{\frac{R^z(P\phi)}{\rho_{r'}(z)}}^{2}}^{1/2} 
 \leqslant &  C_\varepsilon \left(\int_0^{+\infty} \frac{1}{\sqrt{t}}e^{2K_bt} e^{(2rK_b+\varepsilon)t} e^{-(2\ta-\ttheta)t}\dt \right)^{1/2}\\
 & +C_\varepsilon \left(\int_0^{+\infty} e^{(2rK_b+\varepsilon)t} e^{-(2a-{\theta})t}\dt \right)^{1/2}.
\end{align*}
This upper bound is finite for $\epsilon$ small enough, recalling that  $\ta-\ttheta-(1+r)K_b>0$ and $a-\theta-r K_b>0$. In particular, it implies the second part of \eqref{ass:finite-Orlicz}. Finally, by using previous bound and \eqref{ineq:proof:orlicz}, we also have
\begin{align*}
\Exp{\Psi^{LT}_{1/(r+1)}\left(c\frac{R^z(P\phi)}{\rho_{r'}(z)} - c\Exp{\frac{R^{z}(P\phi)}{\rho_{r'}(z)} }\right)} \leqslant e^{(c+c^2)C}-1.
\end{align*}
Since $c \mapsto e^{(c+c^2)C}-1$ is a continuous increasing function from $\bR^+$ to $\bR^+$, we can conclude that the first part of \eqref{ass:finite-Orlicz} also holds true.

2. When $\sigma$ is not constant, the proof of Item \ref{lem:estim:orlicz:item1} becomes false only due to the fact that $\nabla_x X^z$ has not a deterministic upper-bound, which hinder the treatment of the term
$$\Exp{ e^{cC|U_\tE^z|\sqrt{\tE}e^{-\frac{(\ta-\ttheta)}{1+r} \tE}}}.$$
Nevertheless, this problem disappears when $f(x,y,z)=f(x,y)$. Then the proof of Item \ref{lem:estim:orlicz:item2} follows the same proof as the proof of Item \ref{lem:estim:orlicz:item1} remarking that it is enough to take the power $1/(r\vee 1)$ instead of $1/(r+1)$.

3. As in the proof of Item \ref{lem:estim:orlicz:item1} we have
\begin{align*}
\norm{\Exp{ \left(\frac{R^z(P\phi)}{\rho_{r'}(z)}\right)}} \leqslant & \Exp{ \norm{\frac{R^z(P\phi)}{\rho_{r'}(z)}}^{2}}^{1/2} 
\leqslant \Exp{ \norm{\frac{R^z(P\phi)}{\rho_{r'}(z)}}^{\beta}}^{1/\beta} \\
 \leqslant & C\left(\int_0^{+\infty} \frac{1}{\sqrt{t}}\Exp{\frac{t^{\beta/2}|U_{t}^z|^\beta (1+|X_{t}^z|^{\beta r})}{\rho_{r'}(z)^\beta}} e^{-(\beta \ta-(\beta-1)\ttheta)t}\dt \right)^{1/\beta}\\
 & +C\left(\int_0^{+\infty} \Exp{\frac{1+|X_t^z|^{\beta r}}{\rho_{r'}(z)^\beta} } e^{-(\beta a-(\beta-1){\theta})t}\dt \right)^{1/\beta}\\
 \leqslant & C\left(\int_0^{+\infty} \frac{1}{\sqrt{t}}\Exp{t^\beta|U_{t}^z|^{2\beta}}^{1/2} \Exp{\frac{1+|X_{t}^z|^{2\beta r}}{\rho_{r'}(z)^{2\beta}}}^{1/2} e^{-(\beta \ta-(\beta-1)\ttheta)t}\dt \right)^{1/\beta}\\
 & +C\left(\int_0^{+\infty} \Exp{\frac{1+|X_t^z|^{\beta r}}{\rho_{r'}(z)^\beta} } e^{-(\beta a-(\beta-1){\theta})t}\dt \right)^{1/\beta}.
\end{align*}

Then, using \eqref{bound:E[tUt]} with $p=\beta$ and Remark \ref{rem:boundestimatesX} we get, for all $\varepsilon>0$,
\begin{align*}
\Exp{ \norm{\frac{R^z(P\phi)}{\rho_{r'}(z)}}^{\beta}}^{1/\beta} 
 \leqslant &  C_\varepsilon \left(\int_0^{+\infty} \frac{1}{\sqrt{t}}e^{(\beta K_b+\frac{\beta(2\beta-1)}{2} K_{\sigma}^2)t} e^{(\beta rK_b+\varepsilon)t} e^{-(\beta \ta-(\beta-1)\ttheta)t}\dt \right)^{1/\beta}\\
 & +C_\varepsilon \left(\int_0^{+\infty} e^{(\beta rK_b+\varepsilon)t} e^{-(\beta a-(\beta-1){\theta})t}\dt \right)^{1/\beta}.
\end{align*}
This upper bound is finite for $\varepsilon$ small enough, recalling that  $\ta-(r+1)K_b-\frac{2\beta-1}{2} K_{\sigma}^2-\frac{\beta-1}{\beta} \ttheta>0$ and $a-rK_b-\frac{\beta-1}{\beta} {\theta}>0$. In particular, it implies \eqref{ass:finite-Orlicz}.


\def\cprime{$'$}

\end{document}